\numberwithin{equation}{subsection}
\theoremstyle{plain}
  \newtheorem{thm}{Theorem}[section]
\newtheorem{prop}[thm]{Proposition} 
\newtheorem{cor}[thm]{Corollary}
\newtheorem{lemma}[thm]{Lemma}
\theoremstyle{definition}
\newtheorem{remark}[thm]{Remark}
\DeclareMathOperator{\R}{\mathbb{R}}
\def\iid{i.i.d.\ }
\def\bl{\boldsymbol{\lambda}}
\def\Rstar{R^\star_{k^\star, l^\star}}
\def\lsnew{\lambda_{\gamma,\delta}^\star}
\def\blsnew{\boldsymbol{\lambda}_{\gamma, \delta}^\star}
\def\gade{1 + \gamma + \delta}
\def\bpis{\boldsymbol{\pi}^\star}
\def\ls{\lambda^\star}
\def\bls{\boldsymbol{\lambda}^\star}
\def\bz{\mathbf{z}}
\def\As{A^\star}
\newcommand{\e}{\mathrm e}
\newcommand{\De}{\mathrm d}
\theoremstyle{remark} 
\begin{document}
\title{Rates of convergence for extremes of geometric random variables and marked point processes}
\author[1]{Alessandra Cipriani}
\affil[1]{Weierstrass Institute\\ Mohrenstrasse 39\\  10117, Berlin, Germany}

\author[2]{Anne Feidt}
\affil[2]{Institut f\"ur Mathematik, Universit\"at Z\"urich\\ Winterthurerstrasse 190\\ 8057, Zurich, Switzerland}
\affil[1]{\href{mailto:Alessandra.Cipriani@wias-berlin.de}{ Alessandra.Cipriani@wias-berlin.de}}
\affil[2]{\href{mailto:Anne.Feidt@gmx.net}{Anne.Feidt@gmx.net}}

\date{\today}

\maketitle
\bibliographystyle{abbrvnat}
\begin{abstract}
We {use the Stein-Chen method to}  
study the extremal behaviour of
univariate and bivariate geometric laws. 
{We obtain a rate for the convergence, to the Gumbel distribution, of the law of the maximum of \iid geometric random variables, and show that convergence is faster when approximating by a discretised Gumbel. We similarly find a rate of convergence for the law of maxima of bivariate Marshall-Olkin geometric random pairs when approximating by a discrete limit law.}
We 
introduce marked point processes of exceedances (MPPEs), 
{both with univariate and}
bivariate Marshall-Olkin geometric variables
{as marks and }
determine 
bounds on the error of the approximation, in an appropriate probability metric,
of the law of the MPPE 
by that of a Poisson process
{with same mean measure}. We then approximate by another Poisson process with an easier-to-use mean measure and estimate the error of this additional approximation. \\
This work contains and extends results contained in the second author's PhD thesis (\cite{Feidt:2013}) under the supervision of Andrew D. Barbour.

\end{abstract}
\maketitle
\section{Introduction}
The problem of determining the behavior of extremes of random variables is a mathematically intriguing problem. Already the case of the maximum or minimum of a sample of $n$ independent and identically distributed ({i.}{i.}d.) random variables
$X_1, \ldots, X_n$ presents interesting aspects, starting from the existence of a domain of attraction under suitable conditions
(\cite{Fisher/Tippett:1928}). In this work it is also highlighted that
{for discrete distributions for which the jump heights continue to be too large,
}no non-degenerate limit distribution may be found for 
$X_{(n)}$, the maximum of the sample (\cite{Leadbetter_et_al:1983}, Theorem 1.7.13). 
Well-known examples 
are the geometric and Poisson distributions \cite[Examples 1.7.14, 1.7.15]{Leadbetter_et_al:1983}. There is, however, a way to partially remedy this. 
{By} allowing
the distributional parameter (or one of them) to vary with the sample size $n$ at a suitable rate,
{\cite{Anderson_et_al:1997} and \cite{Nadarajah/Mitov:2002} determined extremal limit laws for the Poisson and geometric distributions, respectively}. 
In this paper, we will concentrate on the {geometric distribution. We will consider both univariate and bivariate geometric random variables, as well as point processes with geometric marks, again both univariate and bivariate.

Using a result from the Stein-Chen method for Poisson approximation by \cite{Barbour/Hall:1984}, contained in our work in Theorem~\ref{t: MyMichelMultivariate}, we determine bounds on the error, in the Kolmogorov distance, of the approximation of the maximum, under different normalisations, of \iid geometric random variables by the Gumbel distribution. Our results show that convergence is faster and requires no conditions on the distributional parameter when approximating by a discretised Gumbel distribution. We similarly determine an error bound for the approximation of the joint law of maxima of random pairs, following the bivariate Marshall-Olkin geometric distribution, by an appropriate discrete limit law.} 

{We further use the Stein-Chen method for Poisson process approximation to determine bounds on the error, in a suitable probability metric, of the law of a \textit{marked point process of exceedances} (MPPE), defined by
\begin{equation}\label{d: MPPEu}
\Xi_{u,n} := \sum_{i=1}^n I_{\{X_i >u\}} \delta_{X_i},
\end{equation} 
by that of a Poisson process whose mean measure equals that of the MPPE, where the $X_i$ are \iid geometric random variables and $u$ denotes a threshold, and $\delta_z$ denotes the \textit{Dirac measure} for a point $z \in E$, the state space of the $X_i$'s.}
Though the MPPE does not mark the points that exceed $u$ 
{in the way that a
\textit{point process of exceedances} (PPE) of the form
$\sum_{i=1}^n I_{\{X_i >u\}}\delta_{in^{-1}}$ does,
it contains more information relevant to the study of extreme values than a \textit{marked point process} (MPP) of the form $\sum_{i=1}^n \delta_{X_i}$,
as it is not only a random configuration of points in space, but specifically a random configuration of points exceeding a threshold. A related approach is that of {\it empirical point processes of exceedances} (EPPEs), which have the form $\sum_{i=1}^n I_{\{(i/n,\,f^{-1}(X_i))\in A\}}$ for $A$ a Borel set of $[0,\,1]\times [0,\,+\infty)$ and $f:\,[0,\,+\infty)\to\R$ a strictly decreasing function. For more details on the study of these processes, we refer to \cite{Leadbetter_et_al:1983}, \cite{Resnick:1987}, \cite{Barbour/Novak/Xia:2002}. In addition to an MPPE with univariate geometric marks, we}
consider 
an MPPE with bivariate marks that follow 
the
Marshall-Olkin
geometric distribution, {for which the MPPE indicates whether they lie in a subset $A$ of extreme values of the marks' state space.}
{For both cases, the}
estimate for 
the actual ``Poisson approximation" 
comes easily. The reason for this is that we use \iid samples $X_1, \ldots,X_n$ and \iid indicators $I_{\{X_1 \in A\}}, \ldots, I_{\{ X_n \in A\}}$. This allows us to apply Theorem~\ref{t: MyMichelMultivariate}, which reduces the problem to the approximation of a binomial by a Poisson distribution. However, as the marks have geometric, and thereby discrete margins, the mean measure will live on a lattice and be rather tedious to work with in practial applications. We would therefore prefer to approximate by a further Poisson process with a continuous mean measure. 
Since the total variation distance 
is too strong for this kind of approximation, we use the weaker $d_2$-distance instead, which is not as sensitive towards small changes in the positions of the points of the point processes. Our main effort thus lies in determining error bounds on the approximation of a Poisson process by another Poisson process. 
As the error given by Theorem~\ref{t: MyMichelMultivariate} is 
only $P(X \in A)$, the error obtained by further approximating by a different Poisson process is typically the bigger of the two,
both in the univariate and bivariate case.

{For the MPPE with univariate geometric marks, we specifically add, to the first approximation by a Poisson process with mean measure equal to that of the MPPE, a further approximation by a Poisson process with continuous intensity function.  
For the MPPE with bivariate marks, we proceed by ''spreading out`` the point probabilities of the Marshall-Olkin 
distribution over the entire space. As the intensity function obtained through this depends on $n$, }
we make some additional assumptions on the parameters of the Marshall-Olkin geometric distribution {and further approximate by a Poisson process with a ''continuous" mean measure independent of $n$}. By adding up the $d_2$-error bounds arising from each step, we give the total error bound for the approximation of the MPPE by the final Poisson process.\\
The structure of the paper is as follows: 
{in Section~\ref{sec:review} we recall necessary basic definitions as well as results from the Stein-Chen method. 
In Section~\ref{sec:max} we treat maxima of univariate geometric random variables, as well as joint maxima of random pairs that follow the bivariate Marshall-Olkin geometric distribution. 
In Section~\ref{Sec: MO_Geo} we first study the MPPE with univariate geometric marks, and then the various steps involved in approximating the MPPE with bivariate geometric marks by a Poisson process with ``continuous" mean measure independent of $n$.}
\section{Background}\label{sec:review}
Throughout, let $E$ be a locally compact separable metric space. 
In later applications, we simply use $E\subseteq \mathbb{R}^d$, $d \ge 1$. 
Let $E$ be equipped with its Borel $\sigma$-algebra $\mathcal{E}:= \mathcal{B}(E)$.   
Suppose that $\xi$ is an integer-valued Radon measure on $\mathcal{E}$.
Denote by $\overline{M}_p(E)$ the space of all 
such
point measures 
$\xi$
on $E$ and equip $\overline{M}_p(E)$ with the $\sigma$-algebra $\overline{\mathcal{M}}_p(E)$
that is the smallest $\sigma$-algebra making the evaluation maps $\xi \to \xi(B)$ from $M_p(E)$ to $[0,\infty]$ measurable for any set $B \in \mathcal{E}$.
Similarly, 
denote by $M_p(E) \subset \overline{M}_p(E)$ the space of all \textit{finite} point measures
and equip
$M_p(E)$ with the $\sigma$-algebra $\mathcal{M}_p(E):=\overline{\mathcal{M}}_p(E)\cap M_p(E). $

Let $(\Omega, \mathcal{F}, P)$ be a probability space. We define a \textit{point process} on $E$
by $\Xi: \, (\Omega, \mathcal{F}, P) \to (\overline{M}_p(E), \overline{\mathcal{M}}_p(E))$, $\omega \mapsto \Xi(\omega) = \xi$
and
denote its \textit{intensity measure} or \textit{mean measure} on $\mathcal{E}$ by $\bl$.
Furthermore, we denote the law of \textit{Poisson point processes} $\Xi$ on $E$ with mean measure $\bl$ by $\mathrm{PRM}(\bl)$.
For a general review of point processes, we refer to \cite{Resnick:1987}.
\subsection{Distance between measures}\label{subsec:dist_meas}
%
We recall here some known facts on a probability metric, the $d_2$-metric, that is weaker than the total variation metric, defined for example in \cite[Section 3]{Barbour/Brown:1992}.
Let $d_0$ be a metric on $E$ that is bounded by $1$.
We now define metrics on both the space $M_p(E)$ of finite point measures over $E$ and on the set of probability measures over $M_p(E)$. 
Let $\mathcal{K}$ denote the set of functions $\kappa: E \to \mathbb{R}$ such that 
\[
s_1(\kappa) = \sup_{z_1 \neq z_2 \in E} \frac{|\kappa(z_1) - \kappa(z_2)|}{d_0(z_1, z_2)} \le +\infty,  
\]
which implies that for all $z_1 \neq z_2 \in E$, $|\kappa(z_1) - \kappa(z_2)| \le s_1(\kappa)d_0(z_1, z_2)$. Thus each function $\kappa \in \mathcal{K}$
is Lipschitz continuous with constant $s_1(\kappa)$.
Define a distance $d_1$ between two finite measures $\boldsymbol{\rho}$ and $\boldsymbol{\sigma}$ over $E$ by
\begin{equation}\label{d: def_d1}
d_1(\boldsymbol{\rho},\boldsymbol{\sigma}) = \left\{ 
\begin{array}{ll} 
1, & \textnormal{if }  \boldsymbol{\rho}(E) \neq \boldsymbol{\sigma}(E),\\
\displaystyle \frac{1}{\boldsymbol{\rho}(E)}\, \sup_{\kappa \in \mathcal{K}} \frac{\left| \int_E \kappa d\boldsymbol{\rho} - \int_E \kappa d\boldsymbol{\sigma}\right|}{s_1(\kappa)},
& \textnormal{if } \boldsymbol{\rho}(E) = \boldsymbol{\sigma}(E)\neq 0.
\end{array}
\right.
\end{equation}
Note that $d_1$ is bounded by $1$. We can use $d_1$ as distance between point measures in $M_p(E)$. The $d_1$-distance is then a Wasserstein metric induced by $d_0$
over point measures on $E$. 

We next construct a metric $d_2$ that is a Wasserstein metric induced by $d_1$ over probability measures on $M_p(E)$. 
Let $\mathcal{H}$ denote the set of functions $h : M_p(E) \to \mathbb{R}$ such that 
\begin{equation}\label{d: s_2h}
s_2(h) = \sup_{\xi_1 \neq \xi_2 \in M_p(E)} \frac{|h(\xi_1)-h(\xi_2)|}{d_1(\xi_1, \xi_2)} < \infty,
\end{equation}
i.e. each function $h \in \mathcal{H}$ is Lipschitz continuous with constant $s_2(h)$. We define a distance $d_2$ between probability measures 
$\mu$ and $\nu$ over $M_p(E)$ by
\begin{equation}\label{d: def_d2}
d_2(\mu,\nu)  = \frac{1}{s_2(h)}\, \sup_{h \in \mathcal{H}} \left| \int_{M_p(E)}h d\mu - \int_{M_p(E)} h d\nu \right|.
\end{equation}
Note that $d_2$ is bounded by $1$. 
It can readily be shown that $d_2(\mu,\nu) \le d_{TV}(\mu,\nu)$.
\subsection{The Stein-Chen method}
We briefly recall
the theory of the Stein-Chen method, which was first worked out by \cite{Chen:1975a}. Let $Z$
be a random variable with law $\mu$. A characterising operator for $\mu$ is an operator $A_\mu$ on some class of functions $\mathcal F$ such that, for any random variable $X$,
$$
\mathbb E\left[A_\mu f(X)\right]=0\quad\forall\,f\in\mathcal F\quad \mathrm{iff} \,X\sim\mu.
$$
The idea of the method is that $\mathbb E\left[A_\mu f(X)\right]$ determines the ``distance'' between $Z$ and $X$. To develop this idea, given a function $h\in \mathcal H$ (for example, indicator functions of intervals), we need to find a function $f_h$ such that $A_\mu f_h(x)=h(x)-\mathbb E\left[h(Z)\right]$. Hence this yields $\left| \mathbb E \left[A_\mu f_h(X)\right]\right|=\left|\mathbb E\left[h(X)\right]-\mathbb E\left[h(Z)\right]\right|.$ If the left-hand side is small, then $\left|\mathbb E\left[h(X)\right]-\mathbb E\left[h(Z)\right]\right|$ is also small. A detailed description of the Stein-Chen method can be found, other than in the original article, in \cite{Barbour:1997}. We recall also the following results to keep the paper self-contained: let $Z= \{Z_t, t \in \mathbb{R}_+\}$ be an immigration-death process on $E$ with immigration intensity $\bl$\footnote{$\bl$ is a finite measure over $E$ with $\bl(E)= \lambda$.} and unit per-capita death rate. For any bounded 
$h \in \mathcal{H}$, let the function $\upgamma: M_p(E) \to \mathbb{R}$ be given by
\[
\upgamma(\xi) = - \int_0^\infty \left\{ \mathbb{E}^\xi h(Z_t) - \mathrm{PRM}(\bl)(h) \right\} \De t.
\]
Then
\begin{prop}\label{t: upgamma_welldefined}
$\upgamma(\xi)$ is well defined, and $\sup_{\xi:\, \xi(E)=k} |\upgamma(\xi)| < \infty$ for each $k \in \mathbb{Z}_+$.
\end{prop}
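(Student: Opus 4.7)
The plan is to establish an exponential decay bound of the form
\[
|\mathbb{E}^\xi h(Z_t) - \mathrm{PRM}(\bl)(h)| \le s_2(h)(k + \lambda) e^{-t},
\]
where $k = \xi(E)$, via a classical coupling of the immigration-death process with its stationary distribution. Integrating on $[0,\infty)$ will simultaneously give that $\upgamma(\xi)$ is well defined and that $|\upgamma(\xi)| \le s_2(h)(k + \lambda)$, from which both assertions of the proposition follow, since the bound depends on $\xi$ only through $k$.

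To build the coupling, I would exploit the fact that, under the immigration-death dynamics with unit per-capita death rate, $Z_t^\xi$ decomposes in distribution as the superposition of two independent components: the survivors of $\xi$ at time $t$, an independent thinning of $\xi$ with retention probability $e^{-t}$, and an arrivals component distributed as $\mathrm{PRM}(\bl(1 - e^{-t}))$. Running $Z_t^\xi$ and $Z_t^\emptyset$ with a common arrivals component, the two trajectories coincide unless at least one of the $k$ initial particles is still alive at time $t$, an event of probability at most $k e^{-t}$ by a union bound. On the complementary event, the two point measures differ in total mass, so $d_1 = 1$ by the first case of~\eqref{d: def_d1}, and the Lipschitz hypothesis~\eqref{d: s_2h} yields $|h(Z_t^\xi) - h(Z_t^\emptyset)| \le s_2(h)$. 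Taking expectations bounds this first term by $s_2(h) k e^{-t}$.

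For the second comparison, I would superimpose an independent $\mathrm{PRM}(\bl e^{-t})$ onto $Z_t^\emptyset$ to realise a $\mathrm{PRM}(\bl)$; the two then agree unless the additional component is non-empty, which occurs with probability $1 - e^{-\lambda e^{-t}} \le \lambda e^{-t}$, again contributing at most $s_2(h)$ to the $h$-discrepancy. Summing the two estimates gives the stated exponential bound, and integration in $t$ finishes the proof. The argument is mostly bookkeeping once the coupling is chosen; the only conceptual care needed is to ensure that the truncation of $d_1$ at $1$ (used in both comparisons to tame the ``different total mass'' regime of~\eqref{d: def_d1}) is precisely what prevents the Lipschitz bound from being useless on the bad event, which is why the boundedness of $h$ in the hypothesis is compatible with the estimate.
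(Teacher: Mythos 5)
Your argument is correct and is essentially the standard proof of this fact: the paper itself does not prove the proposition but cites \citet[Prop.~10.1.1]{Barbour_et_al.:1992}, whose argument rests on exactly the decomposition you use (survivors of $\xi$ as an $\mathrm{e}^{-t}$-thinning plus an independent $\mathrm{PRM}((1-\mathrm{e}^{-t})\bl)$ arrivals component), followed by the superposition coupling with a full $\mathrm{PRM}(\bl)$ and the union bound $ (k+\lambda)\mathrm{e}^{-t}$. The only cosmetic difference is that the cited proof phrases the bad-event estimate in terms of $\|h\|_\infty$ and total variation rather than $s_2(h)$ and the truncation $d_1\le 1$, which changes nothing of substance.
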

 \begin{lemma}\label{t: Delta_1_upgamma_d2} If $\gamma$ is defined as above for $h: M_p(E) \to \mathbb{R}$ any function in $\mathcal{H}$, define 
 $$\displaystyle \Delta_1 \upgamma  = \sup_{\xi \in M_p(E), z \in E} \left| \upgamma(\xi + \delta_z) - \upgamma(\xi)\right|.$$
 Then 
\[
\Delta_1 \upgamma  \le s_2(h) \min\left\{1 ,\, \frac{1.65}{\sqrt{\lambda}} \right\}. 
\]  \end{lemma}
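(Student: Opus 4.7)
The plan is to bound $|\upgamma(\xi+\delta_z)-\upgamma(\xi)|$ by coupling two immigration--death processes and splitting the analysis into a crude pointwise estimate (giving the factor $s_2(h)\cdot 1$) and a refined averaging argument of Poisson ``magic factor'' type (giving $s_2(h)\cdot 1.65/\sqrt\lambda$). I would construct $Z^\xi$ and $Z^{\xi+\delta_z}$ on the same probability space, sharing the same immigration arrivals and the same independent $\mathrm{Exp}(1)$ death clocks for the particles of $\xi$, while the additional particle at $z$ is given an independent $\mathrm{Exp}(1)$ lifetime $T$. Under this coupling, $Z_t^{\xi+\delta_z}=Z_t^\xi+\mathbf{1}\{T>t\}\delta_z$ almost surely for every $t\ge 0$, and using the definition of $\upgamma$ together with $\mathbb{E}[\mathbf{1}\{T>t\}]=e^{-t}$,
\[
\upgamma(\xi+\delta_z)-\upgamma(\xi)=-\int_0^\infty e^{-t}\,\mathbb{E}\bigl[h(Z_t^\xi+\delta_z)-h(Z_t^\xi)\bigr]\,\De t.
\]

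The estimate $\Delta_1\upgamma\le s_2(h)$ is then immediate: because the total masses of $Z_t^\xi+\delta_z$ and $Z_t^\xi$ always differ, the definition of $d_1$ gives $d_1(Z_t^\xi+\delta_z,Z_t^\xi)=1$, so $|h(Z_t^\xi+\delta_z)-h(Z_t^\xi)|\le s_2(h)$ pointwise, and $\int_0^\infty e^{-t}\,\De t = 1$.

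The main obstacle is the refined bound $1.65\,s_2(h)/\sqrt\lambda$, since the pointwise estimate $d_1=1$ wastes the information that $Z_t^\xi$ accumulates many points when $\lambda$ is large. Here I would decompose $Z_t^\xi=A_t+B_t$, where $A_t\sim\mathrm{PRM}((1-e^{-t})\bl)$ is the contribution of fresh immigration on $[0,t]$, independent of the surviving particles $B_t$ of $\xi$. Conditionally on the cardinality $N_t\sim\mathrm{Poisson}((1-e^{-t})\lambda)$ of $A_t$, its points are \iid from $\bl/\lambda$ and so exchangeable with $\delta_z$; hence $A_t+\delta_z$ can be recoupled with a Poisson configuration of matching cardinality differing by at most one displaced point. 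The $1/(\text{total mass})$ normalisation in the definition of $d_1$ then contributes a factor of order $1/N_t$ and, averaging over $N_t$ via the Poisson ``magic factor'', bounds the integrand by a constant multiple of $s_2(h)/\sqrt{(1-e^{-t})\lambda}$. Integrating the resulting expression against $e^{-t}$ produces the explicit constant $1.65/\sqrt\lambda$, following the argument of \cite{Barbour/Brown:1992}.
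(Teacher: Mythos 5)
The paper does not actually prove this lemma: it is quoted from \cite[Lemma 10.2.3]{Barbour_et_al.:1992} (see also \cite{Barbour/Brown:1992}), and the architecture you propose --- the coupling giving $Z_t^{\xi+\delta_z}=Z_t^{\xi}+I_{\{T>t\}}\delta_z$ with $T\sim\mathrm{Exp}(1)$, the resulting identity $\upgamma(\xi+\delta_z)-\upgamma(\xi)=-\int_0^\infty \mathrm{e}^{-t}\,\mathbb{E}\bigl[h(Z_t^\xi+\delta_z)-h(Z_t^\xi)\bigr]\mathrm{d}t$, the crude bound from $d_1\le 1$, and the decomposition of $Z_t^\xi$ into the survivors of $\xi$ plus an independent $\mathrm{PRM}((1-\mathrm{e}^{-t})\bl)$ --- is exactly that of the cited proof.

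Your account of where the factor $\lambda^{-1/2}$ comes from, however, has a genuine gap. The displacement/exchangeability step you describe --- replacing $\delta_z$ by an extra i.i.d.\ point of $A_t$ and invoking the $1/(\text{total mass})$ normalisation in $d_1$ --- contributes $s_2(h)\,\mathbb{E}\bigl[(N_t+1)^{-1}\bigr]=s_2(h)(1-\mathrm{e}^{-\theta_t})/\theta_t$ with $\theta_t=(1-\mathrm{e}^{-t})\lambda$; this is of order $\theta_t^{-1}$, not $\theta_t^{-1/2}$, so averaging the $1/N_t$ factor cannot produce the dominant term. What remains after that recoupling is the comparison of $\mathbb{E}\,\bar h(N_t+1)$ with $\mathbb{E}\,\bar h(N_t)$, where $\bar h(n)$ denotes the mean of $h$ over $n$ i.i.d.\ points (plus the frozen survivors): the two configurations now have matching cardinalities pointwise, but the \emph{distribution} of the cardinality has been shifted by one, and this residual term is the actual source of the $\lambda^{-1/2}$. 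One closes it by noting that every $h\in\mathcal H$ has total oscillation at most $s_2(h)$ (since $d_1\le 1$ on all of $M_p(E)$), so the shift costs at most $s_2(h)\,d_{TV}\bigl(\mathrm{Po}(\theta_t)+1,\mathrm{Po}(\theta_t)\bigr)\le s_2(h)\sqrt{2/(\mathrm{e}\theta_t)}$; integrating $\mathrm{e}^{-t}\min\{1,\sqrt{2/(\mathrm{e}\theta_t)}\}$ together with the smaller $1/\theta_t$ piece is what yields the explicit constant $1.65$. Without this cardinality-shift comparison the argument as written does not reach the stated bound.
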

\begin{prop}\label{t: upgamma_solves_Stein}
The function $\upgamma$ satisfies the Stein equation
\[
(\mathcal{A}\upgamma)(\xi) = h(\xi)-\mathrm{PRM}(\bl)(h),
\]
for all $\xi \in M_p(E)$.\end{prop}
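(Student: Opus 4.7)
The plan is to identify $\upgamma$ as (the negative of) the potential associated to the immigration--death semigroup, and then read the Stein equation off the Kolmogorov backward equation.

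First, I would fix notation for the generator $\mathcal A$ of the immigration--death process $Z$. On sufficiently regular $g:M_p(E)\to\R$ it acts as
\[
(\mathcal A g)(\xi) = \int_E \bigl[g(\xi+\delta_z) - g(\xi)\bigr]\,\bl(\mathrm d z) + \int_E \bigl[g(\xi-\delta_z) - g(\xi)\bigr]\,\xi(\mathrm d z),
\]
the two integrals reflecting, respectively, immigration driven by $\bl$ and unit per-capita death. Writing $T_t h(\xi):=\mathbb E^\xi h(Z_t)$ for the associated semigroup, the Kolmogorov backward equation gives $\partial_t T_t h = \mathcal A T_t h$, and $\mathrm{PRM}(\bl)$ is the unique invariant law of $Z$, so that $T_t h(\xi)\to \mathrm{PRM}(\bl)(h)$ as $t\to\infty$.

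Second, I would apply $\mathcal A$ inside the integral defining $\upgamma$:
\[
(\mathcal A \upgamma)(\xi) = -\int_0^\infty \mathcal A\bigl[T_t h - \mathrm{PRM}(\bl)(h)\bigr](\xi)\,\mathrm d t = -\int_0^\infty \partial_t T_t h(\xi)\,\mathrm d t,
\]
where the constant $\mathrm{PRM}(\bl)(h)$ is annihilated by $\mathcal A$. The fundamental theorem of calculus, combined with $T_0 h(\xi)=h(\xi)$ and $\lim_{t\to\infty} T_t h(\xi) = \mathrm{PRM}(\bl)(h)$, then delivers the claimed identity
\[
(\mathcal A \upgamma)(\xi) = -\bigl[\mathrm{PRM}(\bl)(h) - h(\xi)\bigr] = h(\xi) - \mathrm{PRM}(\bl)(h).
\]

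The main technical obstacle is justifying the interchange of $\mathcal A$ with the time integral. Since $\mathcal A$ is built from the pointwise increments $g(\xi\pm\delta_z) - g(\xi)$, this interchange amounts to showing that the functions $t\mapsto T_t h(\xi\pm\delta_z) - T_t h(\xi)$ are dominated by an integrable function of $t$, uniformly in $z$. Proposition~\ref{t: upgamma_welldefined} already supplies the outer integrability that makes $\upgamma(\xi)$ finite on each fixed configuration, while the finer increment control underlying the bound $\Delta_1\upgamma\le s_2(h)\min\{1,\,1.65/\sqrt{\lambda}\}$ of Lemma~\ref{t: Delta_1_upgamma_d2} — which ultimately stems from the exponential coupling rate of two immigration--death processes started from $\xi+\delta_z$ and $\xi$, since the extra particle dies at rate $1$ — provides the domination needed to invoke Fubini. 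The death part of $\mathcal A$ presents no extra difficulty, as $\xi(E)<\infty$ for $\xi\in M_p(E)$; the immigration integral over $E$ is the delicate one, but exponential ergodicity suffices there as well.
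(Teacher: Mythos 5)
Your argument is correct and coincides with the standard proof the paper cites (\cite[Prop.~10.1.2]{Barbour_et_al.:1992}): one applies the generator of the immigration--death process under the time integral, uses the backward equation and the convergence of the semigroup to $\mathrm{PRM}(\bl)(h)$, and concludes by the fundamental theorem of calculus, with the interchange justified exactly by the exponential decay of the increments $T_t h(\xi+\delta_z)-T_t h(\xi)$ coming from the unit death rate of the extra particle. No gaps.
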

Proofs of the above can be found in 
{\cite[Prop. 10.1.1]{Barbour_et_al.:1992}}, {\cite[Lemma 10.2.3]{Barbour_et_al.:1992}} and {\cite[Prop. 10.1.2]{Barbour_et_al.:1992}}.
We also remark that, by Proposition \ref{t: upgamma_solves_Stein}, 
\begin{align}\label{d: Stein_eq_d2}
\begin{split}
\left| \mathbb{E}(\mathcal{A}\upgamma)(\Xi)\right| &= \left| \mathbb{E}h(\Xi) - \mathrm{PRM}(\bl)(h)\right|  = \left| \int_{M_p(E)} h\De \mathcal{L}(\Xi) - \int_{M_p(E)} h \De \mathrm{PRM}(\bl)\right|,
\end{split}
\end{align}

\subsection{General results on approximation by Poisson processes}
We use an argument first made by \cite{Michel:1988}
to establish a bound on the total variation distance between a point process and an ``easier'' Poisson random measure.
\begin{thm} \label{t: Michel}
For each integer $n \ge 1$, let $I_1, \ldots, I_n$ be Bernoulli random variables with probability of success $P(I_i=1)=p_i \in (0,1)$. 
Let $E$ be a locally compact separable metric space and let
$X_1, \ldots, X_n$ be \iid $E$-valued random variables, independent of the $I_i$'s.
Moreover, let $\Xi = \sum_{i=1}^n I_i\delta_{X_i}$ and let $W = \sum_{i=1}^n  I_i$. Then,
\[
d_{TV}(\mathcal{L}(\Xi), \mathrm{PRM}(\mathbb{E}\Xi)) \le d_{TV}(\mathcal{L}(W), \mathrm{Poi}(\mathbb{E}W)). 
\]
\end{thm}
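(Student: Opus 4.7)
The plan is to use a coupling argument, often attributed to Michel, which exploits the fact that the marks $X_1,\ldots,X_n$ are i.i.d.\ and independent of the Bernoulli indicators. Write $\mu_X$ for the common law of the $X_i$'s on $E$. The first (and key) step is to observe that the point process $\Xi=\sum_{i=1}^n I_i\delta_{X_i}$ depends on the indicators only through the total count $W=\sum_i I_i$ \emph{in distribution}: conditional on $W=k$, the i.i.d.\ assumption on the $X_i$'s together with their independence from $(I_1,\ldots,I_n)$ implies that the multiset $\{X_i:I_i=1\}$ has the same law as $k$ i.i.d.\ draws from $\mu_X$. Consequently, if $(Y_i)_{i\ge 1}$ is any i.i.d.\ sequence from $\mu_X$ that is independent of $W$, then $\widetilde\Xi:=\sum_{i=1}^W\delta_{Y_i}$ has the same distribution as $\Xi$.

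Second, I would represent the target Poisson random measure by the same device. Since $X_i\sim\mu_X$ independently of $I_i$, one has $\mathbb{E}\Xi=(\mathbb{E}W)\mu_X$, and the standard factorisation of a Poisson random measure with finite mean measure as (total mass)$\times$(probability measure) gives $\widetilde\Xi':=\sum_{i=1}^N\delta_{Y_i}\sim\mathrm{PRM}(\mathbb{E}\Xi)$, where $N\sim\mathrm{Poi}(\mathbb{E}W)$ is independent of the $Y_i$'s.

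Third, I would invoke the maximal coupling: there exists a probability space carrying random variables $W^\ast\sim\mathcal{L}(W)$ and $N^\ast\sim\mathrm{Poi}(\mathbb{E}W)$, together with an independent i.i.d.\ sequence $(Y_i)$ drawn from $\mu_X$, such that $P(W^\ast\neq N^\ast)=d_{TV}(\mathcal{L}(W),\mathrm{Poi}(\mathbb{E}W))$. Setting $\widetilde\Xi:=\sum_{i=1}^{W^\ast}\delta_{Y_i}$ and $\widetilde\Xi':=\sum_{i=1}^{N^\ast}\delta_{Y_i}$ on this common space, one has $\{\widetilde\Xi\neq\widetilde\Xi'\}\subseteq\{W^\ast\neq N^\ast\}$, since on $\{W^\ast=N^\ast\}$ the two point measures coincide exactly. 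The coupling inequality then yields
\[
d_{TV}(\mathcal{L}(\Xi),\mathrm{PRM}(\mathbb{E}\Xi))\le P(\widetilde\Xi\neq\widetilde\Xi')\le P(W^\ast\neq N^\ast)=d_{TV}(\mathcal{L}(W),\mathrm{Poi}(\mathbb{E}W)),
\]
which is the claimed bound.

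The only substantive point is the very first step, namely the distributional identity $\Xi\stackrel{d}{=}\widetilde\Xi$. It relies crucially on both the i.i.d.\ hypothesis on the marks and on their independence from the indicators; if either is relaxed, the symmetry breaks and the Michel argument fails (one then has to replace it by the heavier machinery of the Stein--Chen method applied directly to the point process). Granted that identity, the remaining two ingredients are completely standard: the factorisation of a finite Poisson random measure through a Poisson-distributed total mass and i.i.d.\ locations, and the existence of a maximal coupling attaining the total variation distance.
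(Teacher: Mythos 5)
Your proof is correct and follows essentially the same route as the paper: both rest on representing $\Xi$ in distribution as $\sum_{j=1}^{W}\delta_{Y_j}$ with i.i.d.\ marks independent of the count, and $\mathrm{PRM}(\mathbb{E}\Xi)$ as the same sum with a $\mathrm{Poi}(\mathbb{E}W)$ count. The only cosmetic difference is that you finish via a maximal coupling of the two counts, whereas the paper invokes the law of total probability; these are interchangeable ways of transferring $d_{TV}(\mathcal{L}(W),\mathrm{Poi}(\mathbb{E}W))$ to the point processes.
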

\begin{proof}
Let $Z_1, \ldots, Z_n$ be \iid random variables with distribution $\mathcal{L}(X_1)$, and let them be independent of $W$. 
Then the process $\sum_{j=1}^W \delta_{Z_j}$ has the same distribution as the process of interest $\Xi$. 
Furthermore, note that a $\mathrm{PRM}(\mathbb{E}\Xi)$ can be realised as $\sum_{j=1}^{W^\star} \delta_{Z_j}$, where $W^\star \sim \mathrm{Poi}(\mathbb{E}W)$ 
is independent of the $Z_j$'s. Then the result follows from this representation of the two processes, using the law of total probability.
\end{proof}
With this theorem at hand, we are able to show
\begin{thm}\label{t: MyMichelMultivariate}
For each integer $n \ge 1$, let $\mathbf{X}_1, \ldots, \mathbf{X}_n$ be \iid copies of a $d$-dimensional random vector $\mathbf{X}$ with state space 
$E \subseteq \mathbb{R}^d$, where $d \ge 1$. For a fixed set $A \in \mathcal{E}$, define
$\Xi_A := \sum_{i=1}^n I_{\{ \mathbf{X}_i \in A\}} \delta_{\mathbf{X}_i}$ 
and
$W_A := \sum_{i=1}^n I_{\{\mathbf{X}_i \in A\}}$
. Then,
\[
d_{TV}(\mathcal{L}(\Xi_A), \mathrm{PRM}(\mathbb{E}\Xi_A)) \le d_{TV}(\mathcal{L}(W_A), \mathrm{Poi}(\mathbb{E}W_A)) \le P(\mathbf{X} \in A). 
\]
\end{thm}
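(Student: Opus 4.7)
The plan is to reduce the process-level Poisson approximation to a scalar Poisson approximation and then invoke a textbook Stein-Chen estimate for the binomial. I therefore attack the two inequalities in turn.

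For the first inequality, Theorem~\ref{t: Michel} does not apply verbatim, because its hypothesis requires the Bernoulli indicators to be independent of the \iid marks, whereas here $I_{\{\mathbf{X}_i\in A\}}$ is a function of $\mathbf{X}_i$. However, the coupling argument behind Theorem~\ref{t: Michel} survives after a re-representation. Set $p:=P(\mathbf{X}\in A)$ and let $\mathbf{Y}_1,\mathbf{Y}_2,\ldots$ be \iid with the conditional distribution $\mathcal{L}(\mathbf{X}\mid\mathbf{X}\in A)$, independent of $W_A$. Conditional on $\{W_A=k\}$, the $k$ indices $i$ for which $\mathbf{X}_i\in A$ are uniform among the $\binom{n}{k}$ possibilities and the corresponding marks are \iid with the conditional law, so
\[
\Xi_A \;\stackrel{d}{=}\; \sum_{j=1}^{W_A}\delta_{\mathbf{Y}_j}.
\]
Since $\mathbb{E}\Xi_A(B)=nP(\mathbf{X}\in A\cap B)=np\,\mathcal{L}(\mathbf{X}\mid\mathbf{X}\in A)(B)$, one analogously realises $\mathrm{PRM}(\mathbb{E}\Xi_A)$ as $\sum_{j=1}^{W^\star}\delta_{\mathbf{Y}_j}$, where $W^\star\sim\mathrm{Poi}(np)$ is independent of the $\mathbf{Y}_j$'s. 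Using the same sequence $(\mathbf{Y}_j)$ in both representations and applying the law of total probability to an arbitrary measurable test set then yields
\[
d_{TV}(\mathcal{L}(\Xi_A),\mathrm{PRM}(\mathbb{E}\Xi_A)) \;\le\; d_{TV}(\mathcal{L}(W_A),\mathrm{Poi}(\mathbb{E}W_A)).
\]

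For the second inequality I would observe that $W_A\sim\mathrm{Bin}(n,p)$ with $\mathbb{E}W_A=np$, and invoke the classical Barbour-Hall Stein-Chen bound for Poisson approximation of a binomial, namely
\[
d_{TV}(\mathrm{Bin}(n,p),\mathrm{Poi}(np)) \;\le\; (1-\mathrm{e}^{-np})\,p \;\le\; p \;=\; P(\mathbf{X}\in A),
\]
which is precisely the required estimate and closes the chain.

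The only real subtlety is the distributional identity for $\Xi_A$; everything else is a one-line coupling identical in spirit to Theorem~\ref{t: Michel} and a textbook scalar bound. The identity itself follows at once from the exchangeability of the tuples $(I_{\{\mathbf{X}_i\in A\}},\mathbf{X}_i)$ and the fact that, given membership in $A$, $\mathbf{X}_i$ has law $\mathcal{L}(\mathbf{X}\mid\mathbf{X}\in A)$.
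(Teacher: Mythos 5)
Your proof is correct and follows essentially the same route as the paper, which simply cites Theorem~\ref{t: Michel} for the first inequality and Theorem~1 of \cite{Barbour/Hall:1984} for the second. Your extra care with the independence issue is well placed: the indicators $I_{\{\mathbf{X}_i\in A\}}$ are indeed not independent of the $\mathbf{X}_i$'s as the hypotheses of Theorem~\ref{t: Michel} literally require, and your re-representation of $\Xi_A$ via \iid marks with the conditional law $\mathcal{L}(\mathbf{X}\mid\mathbf{X}\in A)$, independent of $W_A$, is exactly the adjustment needed to make the paper's ``direct application'' rigorous.
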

\begin{proof}
The first result is a direct application of Theorem \ref{t: Michel}. An application of Theorem 1 by \cite{Barbour/Hall:1984} yields the 
second inequality.
\end{proof}
The next two Propositions are giving useful bounds on the $d_{TV}$ and $d_2$ distances respectively between point measures. For the second one, we will briefly recall some notions and introduce references to understand the results.
The first concept is that of {\it Palm density}, about which the reader will find a more thorough introduction in \citet[Chapter 10]{Kallenberg:1983}. Suppose $\Xi$ is a point process on $E$ with $\sigma$-finite mean measure $\bl$. For any $z \in E$, a point process $\Xi_z$ is called \textit{Palm process
associated with $\Xi$ at $z$} if, for any measurable function $f: E \times \overline{M}_p(E) \to \mathbb{R}_+$,
\begin{equation}\label{d: Palm_equality}
\mathbb{E} \left[ \int_E f(z, \Xi) \Xi(dz)\right] = \mathbb{E} \left[ \int_E f(z,\Xi_z) \bl(dz)\right].  
\end{equation}
Intuitively, a Palm distribution of a point process $\Xi$ at a prescribed location $z$ is the distribution of $\Xi$ conditional on the presence of a point of $\Xi$ at $z$. Note that a process $\Xi$ is a Poisson process if and only if
\[
\mathcal{L}(\Xi_z) = \mathcal{L}(\Xi + \delta_z) \textnormal{ $\bl$-a.s.}
\]
Finally, we recall the following useful results:
\begin{prop}{\citet[p.235]{Barbour_et_al.:1992}}\label{t: dTV_two_PRM}
Let $\bl$ and $\tilde{\bl}$ be two finite measures on $E$. Then
\[
d_{TV}\left(\mathrm{PRM}(\bl), \mathrm{PRM}(\tilde{\bl})\right) \le \int_E | \bl - \tilde{\bl}|(dz).
\]
\end{prop}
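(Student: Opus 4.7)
The plan is to use a standard coupling argument based on the Jordan--Hahn decomposition of the signed measure $\bl-\tilde\bl$, together with the superposition property of Poisson random measures.

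First I would write $\bl-\tilde\bl=\nu^+-\nu^-$, where $\nu^+$ and $\nu^-$ are mutually singular finite positive measures, and set $\mu:=\bl\wedge\tilde\bl$. Then $\bl=\mu+\nu^+$ and $\tilde\bl=\mu+\nu^-$, and $|\bl-\tilde\bl|=\nu^++\nu^-$, so that $\int_E|\bl-\tilde\bl|(\De z)=\nu^+(E)+\nu^-(E)$.

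Next I would construct a coupling on a common probability space. Take three independent Poisson random measures $N_0\sim\mathrm{PRM}(\mu)$, $N_+\sim\mathrm{PRM}(\nu^+)$ and $N_-\sim\mathrm{PRM}(\nu^-)$, and set $\Xi:=N_0+N_+$ and $\tilde\Xi:=N_0+N_-$. By the superposition property for Poisson random measures with finite intensity, $\Xi\sim\mathrm{PRM}(\bl)$ and $\tilde\Xi\sim\mathrm{PRM}(\tilde\bl)$, so this is a valid coupling of the two laws.

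Then I would bound the total variation distance by the standard coupling inequality: for any coupling of two probability laws,
\[
d_{TV}\bigl(\mathcal{L}(\Xi),\mathcal{L}(\tilde\Xi)\bigr)\le P(\Xi\neq\tilde\Xi)\le P\bigl(N_+(E)+N_-(E)>0\bigr)\le \mathbb{E}\bigl[N_+(E)+N_-(E)\bigr]=\nu^+(E)+\nu^-(E),
\]
where in the last step I use Markov's inequality (applied to the integer-valued random variable $N_+(E)+N_-(E)$, where $P(\cdot>0)\le \mathbb{E}[\cdot]$). Combining this with the computation of $\int_E|\bl-\tilde\bl|(\De z)$ above yields the claim.

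The only subtle point is the first step: verifying that $\mu$, $\nu^+$, $\nu^-$ are all finite and mutually well-behaved so that independent Poisson realisations of them make sense and can be superposed. Since $\bl$ and $\tilde\bl$ are finite, so are $\mu$, $\nu^+$ and $\nu^-$, and this is routine. I do not expect any serious obstacle in this proof; it is essentially a textbook application of coupling combined with the additivity of Poisson laws under independent superposition.
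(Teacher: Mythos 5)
Your coupling argument is correct and complete: the Jordan decomposition $\bl=\mu+\nu^+$, $\tilde\bl=\mu+\nu^-$ with $\mu=\bl\wedge\tilde\bl$, the superposition of independent Poisson random measures, and the chain $d_{TV}\le P(\Xi\neq\tilde\Xi)\le P(N_+(E)+N_-(E)>0)\le \nu^+(E)+\nu^-(E)=\int_E|\bl-\tilde\bl|(\De z)$ are all valid. The paper itself offers no proof of this proposition, citing it directly to \citet[p.235]{Barbour_et_al.:1992}, so there is nothing to compare against; your argument is the standard self-contained derivation of that cited bound (and in fact your coupling gives the slightly sharper $1-\mathrm{e}^{-|\bl-\tilde\bl|(E)}$ before the final Markov step).
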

\begin{prop}{\citet[Theorem 1.5]{Xia:1995} (version for Poisson random measures)}\label{t: d2_two_PRM} Let $\bl,\,\tilde \bl$ be two finite measures on $E$, and let $\lambda=\bl(E) =\tilde\bl(E)$. Then it holds that
\begin{align*}
d_2\left(\mathrm{PRM}(\bl), \mathrm{PRM}(\tilde{\bl})\right)&\le(1-\mathrm{e}^{-\lambda})d_1(\bl, \tilde{\bl}).
\end{align*}
\end{prop}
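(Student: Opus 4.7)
The plan is to run the standard Stein--Chen argument for Poisson process approximation, making careful use of the assumption $\bl(E) = \tilde\bl(E) = \lambda$ to obtain the factor $1-\mathrm{e}^{-\lambda}$ in place of a factor $1$. Fix $h \in \mathcal{H}$ and let $\upgamma$ be the solution to the Stein equation for $\mathrm{PRM}(\bl)$ provided by Proposition~\ref{t: upgamma_solves_Stein}. If $\tilde\Xi \sim \mathrm{PRM}(\tilde\bl)$, then (\ref{d: Stein_eq_d2}) gives
\[
\int h \,\De \mathrm{PRM}(\tilde\bl) - \int h \,\De \mathrm{PRM}(\bl) = \mathbb{E}[(\mathcal{A}\upgamma)(\tilde\Xi)].
\]
Writing the generator as $(\mathcal{A}g)(\xi) = \int_E [g(\xi+\delta_z)-g(\xi)]\bl(\De z) + \int_E [g(\xi-\delta_z)-g(\xi)]\xi(\De z)$ and applying the Palm identity (\ref{d: Palm_equality}) to the second term, together with the defining property $\tilde\Xi_z \stackrel{d}{=} \tilde\Xi + \delta_z$ of the Poisson process, the $\tilde\bl$-integral rearranges to cancel part of the $\bl$-integral and yields the representation
\[
\mathbb{E}[(\mathcal{A}\upgamma)(\tilde\Xi)] = \int_E \phi(z)\,(\bl - \tilde\bl)(\De z), \qquad \phi(z) := \mathbb{E}[\upgamma(\tilde\Xi+\delta_z)-\upgamma(\tilde\Xi)].
\]

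The core of the proof is then to show the Lipschitz bound $s_1(\phi) \le s_2(h)(1-\mathrm{e}^{-\lambda})/\lambda$. For this, I would couple the immigration--death processes starting from $\xi+\delta_z$ and from $\xi$ using the same immigration and the same lifetimes, so that the extra point at $z$ dies at an exponential time $\tau$ independent of the rest, giving
\[
\upgamma(\xi+\delta_z)-\upgamma(\xi) = -\int_0^\infty \mathrm{e}^{-t}\,\mathbb{E}\bigl[h(Z^\xi_t+\delta_z)-h(Z^\xi_t)\bigr]\De t.
\]
For distinct $z_1,z_2 \in E$, since $Z^\xi_t+\delta_{z_1}$ and $Z^\xi_t+\delta_{z_2}$ have equal total mass, the Lipschitz property of $h$ gives $|h(Z^\xi_t+\delta_{z_1})-h(Z^\xi_t+\delta_{z_2})| \le s_2(h)\,d_0(z_1,z_2)/(Z^\xi_t(E)+1)$. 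A standard thinning--superposition argument identifies $Z^{\tilde\Xi}_t$ as $\mathrm{PRM}(\mathrm{e}^{-t}\tilde\bl + (1-\mathrm{e}^{-t})\bl)$, whose total mass on $E$ is Poisson with mean $\lambda$ precisely because $\bl(E)=\tilde\bl(E)=\lambda$. Using the identity $\mathbb{E}[1/(N+1)]=(1-\mathrm{e}^{-\lambda})/\lambda$ for $N \sim \mathrm{Poi}(\lambda)$ and integrating out $\mathrm{e}^{-t}$ over $[0,\infty)$ delivers the claimed Lipschitz bound on $\phi$.

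To conclude, since $\bl$ and $\tilde\bl$ have equal total mass $\lambda$, the definition (\ref{d: def_d1}) of $d_1$ applied to $\phi \in \mathcal{K}$ yields
\[
\left|\int_E \phi\,(\bl-\tilde\bl)(\De z)\right| \le s_1(\phi)\,\lambda\,d_1(\bl,\tilde\bl) \le s_2(h)(1-\mathrm{e}^{-\lambda})\,d_1(\bl,\tilde\bl).
\]
Dividing by $s_2(h)$ and taking the supremum over $h \in \mathcal{H}$ in (\ref{d: def_d2}) gives the result. The main obstacle is the Lipschitz estimate on $\phi$: the factor $1-\mathrm{e}^{-\lambda}$ rather than $1$ comes out only because the semigroup coupling produces the factor $\mathrm{e}^{-t}$ for the death of the extra point, and the marginal size of $Z^{\tilde\Xi}_t$ is exactly $\mathrm{Poi}(\lambda)$; both ingredients rely in an essential way on the hypothesis $\bl(E)=\tilde\bl(E)=\lambda$, which is why the statement requires equal total masses whereas Proposition~\ref{t: dTV_two_PRM} does not.
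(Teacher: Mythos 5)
The paper does not prove this proposition at all: it is stated as a direct citation of Theorem 1.5 of \cite{Xia:1995} (with the accompanying remark that the pseudo-metric used there coincides with $d_1$ when $\bl(E)=\tilde\bl(E)$). Your argument is a correct reconstruction of the standard Stein--Chen proof from that source --- the generator/Palm-calculus representation $\int_E\phi\,(\bl-\tilde\bl)(\De z)$, the coupling formula for $\upgamma(\xi+\delta_z)-\upgamma(\xi)$, the identification of $Z_t^{\tilde\Xi}$ as $\mathrm{PRM}(\mathrm{e}^{-t}\tilde\bl+(1-\mathrm{e}^{-t})\bl)$ with total mass $\mathrm{Poi}(\lambda)$, and $\mathbb{E}[1/(N+1)]=(1-\mathrm{e}^{-\lambda})/\lambda$ --- so it matches the cited proof in all essential steps.
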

\begin{remark}
We highlight that the pseudo-metric $d_1'$ used in \cite{Xia:1995} for the above result is equal to the $d_1$-metric in our setting where $\bl(E) =\tilde\bl(E)$. 
\end{remark}
\subsection{The bivariate Marshall-Olkin geometric distribution}\label{s: MO_Geo}
The bivariate Marshall-Olkin geometric distribution arises as a natural generalisation of the geometric distribution to two dimensions.
It was first introduced by \cite{Hawkes:1972} and later studied by \cite{Marshall/Olkin:1985} as the discrete counterpart to their bivariate exponential 
distribution, first derived by them in \cite{Marshall/Olkin:1967} using shock models. Limit distributions for maxima of i.i.d. Marshall-Olkin geometric
random pairs were established in \cite{Mitov/Nadarajah:2005} and \cite{Feidt_et_al:2010}. 

Underlying the Marshall-Olkin geometric distribution are Bernoulli trials. Suppose $S$ and $T$ are two Bernoulli random variables 
with joint probability mass function $P(S=i,T=j)=p_{ij}$, for all $i,j=0,1$, and let $(S_1,\,T_1),\,(S_2,\,T_2),\ldots$ be i.i.d. copies of $(S,\,T)$. Let $X_1$ and $X_2$ denote the numbers of $0$s before the first $1$ in the sequences $S_1, S_2, \dots$ and $T_1, T_2,\dots$, respectively. 
Obviously, $X_1$ and $X_2$ follow geometric distributions with failure probabilities
$q_1 := P(S=0) = p_{00} + p_{01}$ and $q_2:= P(T=0) = p_{00}+p_{10}$, respectively. 
Their joint probability mass function is given by
\begin{equation}\label{d: MO_pointprob}
 P(X_1=k, X_2=l)= 
 \left\{ \begin{array}{ll}
 p_{00}^k q_2^{l-k} (1-p_{00}/ q_2 - q_2 + p_{00} ) & \textrm{ for } k < l , \\
 p_{00}^k ( 1-q_1 - q_2 + p_{00})& \textrm{ for } k=l ,  \\
 p_{00}^l q_1^{k-l} (1-q_1 - p_{00}/q_1 + p_{00} ) & \textrm{ for } k>l ,
 \end{array} \right.
\end{equation}
for any $(k,l) \in \mathbb{Z}_+^2$, where $\mathbb{Z}_+=\{0,\,1,\,2,\,\ldots\}$. The distribution of $\mathbf{X}= (X_1,X_2)$ thus depends on three parameters: the two marginal failure probabilities $q_1$ and
$q_2$, as well as $p_{00}=P(S=0,T=0)$, the probability of joint failure.
We assume that $p_{00} \ge q_1 q_2$. 
We have
\begin{equation}\label{d: MO_survival}
P(X_1 \ge k, X_2 \ge l)
 = \left\{ \begin{array}{ll}
 p_{00}^k q_2^{l-k} & \textrm{ for } k < l , \\
 p_{00}^k & \textrm{ for } k=l , \\
 p_{00}^l q_1^{k-l} & \textrm{ for } k>l.
 \end{array} \right. 
 \end{equation}
Note that if $p_{00} = q_1q_2$, the Marshall-Olkin geometric distribution corresponds to a bivariate distribution with independent geometric margins. 
\section{
Rates of convergence for maxima of geometric random variables}\label{sec:max}
In this section we determine bounds on the error of the approximation, in the Kolmogorov distance, of the laws of maxima of univariate geometric random variables and bivariate Marshall-Olkin geometric random pairs by appropriate limit distributions. 
The one-dimensional case is treated for example by 
\cite{Nadarajah/Mitov:2002} where they see that, for $X_1, \ldots, X_n$ \iid geometric random variables with probability of success $p=p_n \in (0,1)$, if 
$p_n \to 0$ as $n \to \infty$
then, for all $x \in \mathbb{R}$,
\begin{equation}\label{eq:geo_gumbel}
\lim_{n\to+\infty}P\left( X_{(n)} \le \frac{\log n + x}{p_n}\right)= \mathrm{e}^{-\mathrm{e}^{-x}}.
\end{equation}
The following proposition investigates the rate of convergence of this limit result and suggests two improvements.
One way to reduce the error is to approximate by a discretised version of the Gumbel distribution, the other is to use different normalising constants.
\begin{prop}\label{t: max_geo_d_K}
For each integer $n \ge 1$, let $X_1, \ldots, X_n$ be \iid geometric random variables with success probability $p_n\in (0,1)$, 
failure probability $q_n=1-p_n$, probability mass function 
$P(X_1=k)=p_nq_n^k$ and survival function $\overline{F}(k)= q_n^{k+1}$, for any $k \in \mathbb{Z}_+=\left\{0,\,1,\,2,\ldots \right\}$. 
Then:
\begin{enumerate}
\item[(a)] (Approximation by a discretised Gumbel distribution) For all $k \in \mathbb{Z}_+$ and for all $k^\star \in \mathbb{R}$ defined by $k^\star = -\log n + k \log(1/q_n)$,
\begin{equation}\label{(a)Gumbel}
\left| P\left( X_{(n)} < \frac{\log n + k^\star}{\log(1/q_n)} \right) -  \mathrm{e}^{-\mathrm{e}^{-k^\star}}\right| \le \frac{\log n}{q_n n} + \frac{1}{n} =:\delta_{\mathrm{PoiAppr}}.
\end{equation}
\item[(b)] (Approximation by a Gumbel distribution) For all $x \in \mathbb{R}$,
\[
\left| P\left( X_{(n)} < \frac{\log n + x}{\log(1/q_n)} \right) -  \mathrm{e}^{-\mathrm{e}^{-x}}\right| \le \delta_{\mathrm{PoiAppr}} +\mathrm{e}^{-1}\log(1/q_n) =: 
\delta_{\mathrm{Cont}}.
\]
\item[(c)] (Using the normalising constants from \cite{Nadarajah/Mitov:2002})
\[
\left| P\left( X_{(n)} < \frac{\log n + x}{1-q_n} \right) -  \mathrm{e}^{-\mathrm{e}^{-x}}\right| \le \delta_{\mathrm{Cont}} +\frac{1-q_n}{2q_n}\left( \log^2 n +\mathrm{e}^{-1}\right). 
\]
\end{enumerate}
\end{prop}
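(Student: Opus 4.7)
I would apply Theorem~\ref{t: MyMichelMultivariate} with $E = \mathbb{Z}_+$, $\mathbf{X}_i = X_i$ and $A = \{k, k+1, \ldots\}$, in which case $W_A = \sum_{i=1}^n \mathbf{1}\{X_i \ge k\}$ and $\{X_{(n)} < k\} = \{W_A = 0\}$. Since $P(X_1 \ge k) = \overline{F}(k-1) = q_n^k$ and $\mathbb{E} W_A = nq_n^k$, the theorem yields
\[
\bigl|P(X_{(n)} < k) - \mathrm{e}^{-nq_n^k}\bigr| \le q_n^k,
\]
and the identity $\mathrm{e}^{-nq_n^k} = \mathrm{e}^{-\mathrm{e}^{-k^\star}}$ follows directly from $k^\star = -\log n + k\log(1/q_n)$. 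To turn the $k$-dependent estimate $q_n^k$ into a uniform bound, I would split on whether $nq_n^{k+1} \le \log n$. When it does, $q_n^k = q_n^{k+1}/q_n \le \log n/(q_n n)$, so the Poisson estimate already suffices. When it does not, $nq_n^k > \log n$, hence $(1-q_n^k)^n \le \mathrm{e}^{-nq_n^k} \le 1/n$ and $\mathrm{e}^{-nq_n^k} \le 1/n$ as well, so their difference is bounded by $1/n$. Adding the two contributions gives $\delta_{\mathrm{PoiAppr}}$.

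\textbf{Part (b).} For $x \in \mathbb{R}$ with $(\log n + x)/\log(1/q_n) > 0$ (the complementary case making both sides of the claim negligibly small), set $k := \lceil (\log n + x)/\log(1/q_n) \rceil \in \mathbb{Z}_+$; by integrality of $X_{(n)}$,
\[
P\Bigl(X_{(n)} < \tfrac{\log n + x}{\log(1/q_n)}\Bigr) = P(X_{(n)} < k),
\]
and the associated $k^\star = -\log n + k\log(1/q_n)$ satisfies $0 \le k^\star - x < \log(1/q_n)$. Applying (a) and the triangle inequality reduces the problem to estimating $|f(k^\star) - f(x)|$, where $f(u) := \mathrm{e}^{-\mathrm{e}^{-u}}$. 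A direct computation gives $f'(u) = \mathrm{e}^{-\mathrm{e}^{-u}-u}$, which attains its global maximum $\mathrm{e}^{-1}$ at $u = 0$; hence $f$ is globally $\mathrm{e}^{-1}$-Lipschitz and $|f(k^\star) - f(x)| \le \mathrm{e}^{-1}\log(1/q_n)$, yielding $\delta_{\mathrm{Cont}}$.

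\textbf{Part (c) and the main obstacle.} To reduce to (b), introduce $x' := (\alpha-1)\log n + \alpha x$ with $\alpha := \log(1/q_n)/(1-q_n) \ge 1$, chosen so that $(\log n + x)/(1-q_n) = (\log n + x')/\log(1/q_n)$. Part (b) then controls the probability by $\delta_{\mathrm{Cont}}$ around $f(x')$, and from $\log(1/q_n) = \sum_{j \ge 1}(1-q_n)^j/j$ we obtain $\alpha - 1 \le (1-q_n)/(2q_n)$ by summing the geometric tail. The main obstacle is the uniform-in-$x$ estimate of $|f(x')-f(x)|$: the increment $x'-x = (\alpha-1)(\log n + x)$ is not bounded as $|x| \to \infty$, so the plain Lipschitz estimate fails outright. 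I would address this by splitting into a central range, $|x|$ of order at most $\log n$, on which a careful comparison of $\mathrm{e}^{-x'}$ and $\mathrm{e}^{-x}$ together with $|x'-x| \le (\alpha-1)\cdot O(\log n)$ yields the $\tfrac{1-q_n}{2q_n}\log^2 n$ contribution (the product of the distortion $(\alpha-1)\log n$ with the worst-case range of $\mathrm{e}^{-x}$ on the central band), and a tail range $|x| \gg \log n$ on which both $f(x)$ and $f(x')$ saturate near $0$ or near $1$ and contribute a negligible amount. The residual $\tfrac{1-q_n}{2q_n}\mathrm{e}^{-1}$ inside the parentheses reflects the peak Lipschitz constant of $f$ picked up near $u = 0$.
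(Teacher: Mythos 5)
Parts (a) and (b) are correct and take essentially the paper's route: Theorem~\ref{t: MyMichelMultivariate} gives the pointwise bound $q_n^k$ together with the identity $nq_n^k=\mathrm{e}^{-k^\star}$, and the uniformisation splits according to whether the expected number of exceedances exceeds $\log n$ (your condition $nq_n^{k+1}\le\log n$ and the paper's auxiliary point $x_0=-\log\log n$ encode the same split up to the factor $q_n$). Your treatment of the large-mean case is in fact cleaner than the paper's: you note directly that both $(1-q_n^k)^n$ and $\mathrm{e}^{-nq_n^k}$ lie in $[0,1/n]$, whereas the paper detours through monotonicity of the distribution function and the auxiliary lattice point $m^\star$. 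Part (b) coincides with the paper's argument (the $\mathrm{e}^{-1}$-Lipschitz bound on $u\mapsto\mathrm{e}^{-\mathrm{e}^{-u}}$ applied over $[x,k^\star]$ with $k^\star-x\le\log(1/q_n)$).

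In part (c) your reduction (reparametrise so that (b) applies at $x'$, and bound $\alpha-1\le(1-q_n)/(2q_n)$ via the series for $\log(1/q_n)$) is exactly the paper's, but the step you yourself identify as the main obstacle is only sketched, and the sketch as written would not go through. The working estimate is $f(x')-f(x)\le \mathrm{e}^{-x}-\mathrm{e}^{-x'}\le \mathrm{e}^{-x}(x'-x)=(\alpha-1)\bigl(\mathrm{e}^{-x}\log n+x\mathrm{e}^{-x}\bigr)$, which yields $\frac{1-q_n}{2q_n}(\log^2 n+\mathrm{e}^{-1})$ precisely when $\mathrm{e}^{-x}\le\log n$, i.e.\ on $x\ge-\log\log n$ (no upper cutoff is needed, since both $\mathrm{e}^{-x}\log n$ and $x\mathrm{e}^{-x}$ decay as $x\to+\infty$; also, the $\mathrm{e}^{-1}$ comes from $x\mathrm{e}^{-x}\le\mathrm{e}^{-1}$, not from the peak of $f'$, which has already been spent in (b)). The delicate range is therefore $-\log n\le x<-\log\log n$, which your proposed central band ``$|x|$ of order at most $\log n$'' would wrongly absorb. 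On that range the claim that $f(x)$ and $f(x')$ both saturate near $0$ is not automatic: $\mathrm{e}^{-x'}=\mathrm{e}^{-x}\mathrm{e}^{-(\alpha-1)(\log n+x)}$ can be as small as $\log n\cdot n^{-(\alpha-1)}$, so $f(x')$ stays near $0$ only when $(\alpha-1)\log n$ is small — essentially only when $\frac{1-q_n}{2q_n}\log^2 n$ is itself small; otherwise one must fall back on the observation that the claimed bound then exceeds $1$ and is vacuous. The split has to be anchored at $x_0=-\log\log n$ as in part (a), and the sub-$x_0$ case argued separately (the paper compresses this into ``an analogous argument'', but since you flag it as the crux, it is the part that actually needs to be written out).
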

\noindent Note that the failure probability $q_n$ need not vary with the sample size $n$ for approximation by a discretised Gumbel distribution. The error bound is sharp for any constant
$q_n \equiv q \in (0,1)$, showing clearly that it makes more sense to approximate a discrete distribution by another discrete distribution than by a continuous one, as there is
no need to add an extra error as in (b). 

The extra error in (b), $\mathrm{e}^{-1}\log(1/q_n)$, is the discretisation error that
arises when going from the Gumbel concentrated on the lattice of
points $k^\star$ to the continuous Gumbel distribution over 
$\mathbb{R}$. It dominates the overall error in (b) unless 
$q_n$ tends to $1$ fast enough as $n \to \infty$, that is, unless
$1-q_n = O(\log(n)/n)$, in which case the discretisation error is of 
the same order as the first error term from (a).

The upper bound in Part (c) shows that the choice of normalising constants or more precisely, of the scaling by $p_n$ in \eqref{eq:geo_gumbel},
may not be the optimal one. In order for the approximation in (c) to be good we require $p_n = o(1/\log^2 n)$.
Its being a stronger condition than the one for the asymptotic 
result from \eqref{eq:geo_gumbel} is justified by (c) also being a stronger result in the sense that it gives a uniform bound. Note that the error in (c) 
is of the same order as the error in (a) only if $1-q_n = O( 1/(n\log n))$.
\begin{proof} For ease of notation we omit the subscript $n$. 
\begin{enumerate}
\item[(a)] Let $A=[y, \infty)$ for any choice of $y \ge 0$. Then $P(X_1 \in A) = q^{\lceil y \rceil}$, and, setting $k:=\lceil y \rceil \in \mathbb{Z}_+$,
Theorem \ref{t: MyMichelMultivariate} gives
\begin{equation}\label{p: first_bound_geo}
\left| P\left( X_{(n)} < k \right) - \mathrm{e}^{-nq^k} \right| \le q^k. 
\end{equation}
With $k^\star \in \mathbb{R}$ chosen such that $k=(\log n + k^\star)/\log(1/q)$, we then have
\begin{equation}\label{p: Max_Geo_error}
\left| P\left( X_{(n)} < \frac{\log n+k^\star}{\log(1/q)}\right) - \mathrm{e}^{-\mathrm{e}^{-k^\star}} \right| \le \frac{\mathrm{e}^{-k^\star}}{n}. 
\end{equation}
In order to find a uniform bound for all $k \in \mathbb{Z}_+$, choose an auxiliary point defined as $x_0:=x_{0n}:= - \log \log n$. We then study the error term in two cases:
\begin{itemize}
\item $k$: $k^\star \ge x_0$. We have
$\exp{(-k^\star)}/n \le \exp{(-x_0)}/n = \log (n)/n$. 
\item $k$: $-\log n\le k^\star \le x_0$. We have $k^\star\le m^\star\le x_0$ where 
\[m^\star := \lfloor (\log n + x_0)/\log(1/q) \rfloor\log(1/q)-\log n.\]
Since distribution functions are non-decreasing and using (\ref{p: Max_Geo_error}), 
\[
P\left( X_{(n)} < \frac{\log n+k^\star}{\log(1/q)}\right)\le P\left( X_{(n)} < \frac{\log n+m^\star}{\log(1/q)}\right)\le \frac{\mathrm{e}^{-m^\star}}{n}  + \mathrm{e}^{-\mathrm{e}^{-m^\star}}. 
\]
Therefore we obtain
\begin{align}\label{p: bound_xk_vs.xstar}
\left| P\left( X_{(n)} < \frac{\log n+k^\star}{\log(1/q)}\right) - \mathrm{e}^{-\mathrm{e}^{-k^\star}} \right|  &\le \frac{\mathrm{e}^{-m^\star}}{n}  + \mathrm{e}^{-\mathrm{e}^{-m^\star}}  
\nonumber \\
&\le \frac{\mathrm{e}^{-x_0}}{qn} + \mathrm{e}^{-\mathrm{e}^{-x_0}} 
=
\frac{\log n}{qn} + \frac{1}{n},
\end{align}
where we used $m^\star \ge x_0 -\log(1/q)$ in the last inequality. 
\end{itemize}
\noindent (\ref{p: bound_xk_vs.xstar}) provides a bound for all $k^\star$,
and thus
\[
\left| P\left( X_{(n)} < \frac{\log n +k^\star}{\log(1/q)}\right) - \mathrm{e}^{-\mathrm{e}^{-k^\star}} \right| \le \frac{\log n}{qn} + \frac{1}{n}.
\]
\item[(b)] Let $x= \log(1/q)y-\log n$. 
By adding and subtracting $\exp\left\{-\mathrm{e}^{-x}\right\}$ 
into (\ref{(a)Gumbel}), and noting that,
since $y \le \lceil y \rceil = k$, we have $x \le k^\star$ and
$\exp\{-\mathrm{e}^{-k^\star}\}-\exp\{-\mathrm{e}^{-x}\} \ge 0$. We thus obtain
\[
\left| P\left( X_{(n)} < \frac{\log n +x}{\log(1/q)} \right)
- \mathrm{e}^{-\mathrm{e}^{-x}} \right| \le 
\delta_{\mathrm{PoiAppr}} + \mathrm{e}^{-\mathrm{e}^{-k^\star}} - \mathrm{e}^{-\mathrm{e}^{-x}},
\]
where 
\[
\mathrm{e}^{-\mathrm{e}^{-k^\star}} - \mathrm{e}^{-\mathrm{e}^{-x}}
\le \int_{x}^{k^\star}\mathrm{e}^{-t}\mathrm{e}^{-\mathrm{e}^{-t}}\De t \le \mathrm{e}^{-1}(k^\star - x)
= \mathrm{e}^{-1}\log(1/q)(\lceil y \rceil - y) \le \mathrm{e}^{-1}\log(1/q).
\]
\item[(c)]
From \eqref{p: Max_Geo_error} and using $x\le k^\star$ as defined in (b) one has
\begin{equation}\label{p: max_geo_cont}
\left| P\left( X_{(n)} < \frac{\log n +x}{ \log(1/q)}\right)
-\mathrm{e}^{-\mathrm{e}^{-x}}\right| \le
\frac{e^{-x}}{n} + \mathrm{e}^{-\mathrm{e}^{-k^\star}}-\mathrm{e}^{-\mathrm{e}^{-x}}\le \frac{\e^{-x}}{n}+\e^{-1}\log(1/q),  
\end{equation}
where we use the upper bound $\e^{-1}\log(1/q)  $ derived in (b).
Choose $x' \ge -\log n$ such that 
\[
y = \frac{\log n + x}{\log(1/q)} = \frac{\log n + x'}{1-q}.
\]
Then from \eqref{p: max_geo_cont} and observing that $x>x'$ (since $\log(1/q)>1-q$) 
\begin{align*}
\left| P\left( X_{(n)} < \frac{\log n + x'}{1-q}\right) - \mathrm{e}^{-\mathrm{e}^{-x}}\right| &\le \frac{\mathrm{e}^{-x'}}{n}  + \mathrm{e}^{-1}\log(1/q) 
\end{align*}
By adding and subtracting $\mathrm{e}^{-\mathrm{e}^{-x'}}$ in the above expression we get
\begin{align*}
&\left| P\left( X_{(n)} < \frac{\log n + x'}{1-q}\right) - \mathrm{e}^{-\mathrm{e}^{-x'}}\right| \le \frac{\mathrm{e}^{-x'}}{n}  + \mathrm{e}^{-1}\log(1/q) + \mathrm{e}^{-\mathrm{e}^{-x}} - \mathrm{e}^{-\mathrm{e}^{-x'}}.
\end{align*}
For the latter error term we find
\begin{align*}
\mathrm{e}^{-\mathrm{e}^{-x}} - \mathrm{e}^{-\mathrm{e}^{-x'}} &= \mathrm{e}^{-\mathrm{e}^{-x}} \left[ 1-\mathrm{e}^{-(\mathrm{e}^{-x'} - \mathrm{e}^{-x})}\right] \le \mathrm{e}^{-x'}\left[ 1-\mathrm{e}^{-(x-x')}\right] \\
& \le \mathrm{e}^{-x'} (x-x') = \mathrm{e}^{-x'}\left[\log (1/q) - (1-q)\right]y, 
\end{align*}
where we used $\exp\{-\mathrm{e}^{-x}\} \le 1$ in the first inequality and $1-\mathrm{e}^{-z} \le z$ for $z \ge 0$ in both inequalities. Note that use of the definition of the logarithm
and the geometric series give
\begin{align*}
\log(1/q)-(1-q) &= \sum_{j=2}^\infty \frac{(1-q)^j}{j} \le \sum_{j=2}^\infty \frac{(1-q)^j}{2} 
= \frac{1}{2}\left[ \sum_{j=0}^\infty (1-q)^j - 1 - (1-q)\right]\\
&= \frac{1}{2} \left[ \frac{1}{q} -2 + q\right]= \frac{(1-q)^2}{2q}.
\end{align*}
Then, 
\[
\mathrm{e}^{-\mathrm{e}^{-x}} - \mathrm{e}^{-\mathrm{e}^{-x'}} \le  \frac{(1-q)^2y}{2q}\mathrm{e}^{-x'} = \frac{1-q}{2q} (\log n + x')\mathrm{e}^{-x'} \le \frac{1-q}{2q} (\mathrm{e}^{-x'}\log n  + \mathrm{e}^{-1}).
\]
Thus,  
\[
\left| P\left( X_{(n)} < \frac{\log n + x'}{1-q}\right)
- \mathrm{e}^{-\mathrm{e}^{-x'}}\right|
\le \frac{\mathrm{e}^{-x'}}{n}  + \mathrm{e}^{-1}\log(1/q) + 
\frac{1-q}{2q} \left(\mathrm{e}^{-x'}\log n  + \mathrm{e}^{-1}\right).
\]
For a uniform bound over all $x'$, we choose $x_0$ as before in (a) and (b), and obtain, with an analogous argument, the overall error bound
\[
\frac{\log n}{n}  + \mathrm{e}^{-1}\log(1/q) + \frac{1-q}{2q}\left(\log^2 n + \mathrm{e}^{-1}\right) + \frac{1}{n}.
\]  
\end{enumerate}
\end{proof}
\noindent
{
We now pass to the bivariate case where we assume that the random pair $\mathbf{Z} = (X,\,Y)$ follows the Marshall-Olkin geometric distribution as defined in Section \ref{s: MO_Geo}, taking values $(k,\ell)\in \mathbb{Z}_+^2$. We use the following normalisation:
\begin{equation}\label{d: normalisation_biv_MOgeo}
\mathbf{Z}^\star = (X^\star, Y^\star) = \left( \log\left(\frac1{p_{00}}\right) X - \log n\, ,\,  \log\left(\frac1{p_{00}}\right) Y - \log n\right),
\end{equation}
taking values $(k^\star, \ell^\star) \in [-\log n, \infty)^2$. Similarly to (a) in Proposition \ref{t: max_geo_d_K} above, the following proposition provides bounds on the error of the approximation, in the Kolmogorov distance, of the distribution of maxima of Marshall-Olkin geometric pairs by a discrete limit distribution.
\begin{prop}\label{p: Ale}
For each integer $n \ge  3$, let $\mathbf{Z}_1^\star, \ldots, \mathbf{Z}^\star_n$ be \iid copies of the random pair 
$\mathbf{Z}^\star = (X^\star,\,Y^\star)$ as 
defined in (\ref{d: normalisation_biv_MOgeo}).
Moreover let $ X^\star_{(n)}=\max_{1\le i \le n}X^\star_{i}$ and similarly for $ Y^\star_{(n)}$. Then, for all $(k^\star,\,\ell^\star)\in [-\log n, \infty)^2$,
\begin{eqnarray*}
&&\left| P\left( X^\star_{(n)}< k^\star,\, Y^\star_{(n)}< \ell^\star\right)-H(k^\star,\,\ell^\star)\right|
{
\le \left(\frac{1}{n}\right)^{\frac{\log \min \{q_1,q_2 \}}{\log p_{00}}} \cdot \frac{\log n}{p_{00}} +\mathrm e^{-\sqrt{\log n}},}
\end{eqnarray*}
where 
$$
H(x,\,y)=\left\{ \begin{array}{lr}
\mathrm{e}^{-\mathrm{e}^{-\frac{\log(p_{00}/q_2)}{\log p_{00}}\,x - \frac{\log q_2}{\log p_{00}}\,y}} & x<y,\\
\mathrm{e}^{-\mathrm{e}^{-x}} & x=y,\\
\mathrm{e}^{-\mathrm{e}^{-\frac{\log(p_{00}/q_1)}{\log p_{00}}\,y - \frac{\log q_1}{\log p_{00}}\,x}} & x>y,\\
                  \end{array}
\right.
$$
\end{prop}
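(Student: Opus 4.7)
The approach mirrors part~(a) of Proposition~\ref{t: max_geo_d_K}, adapted to the bivariate Marshall-Olkin setting. The key tool is again Theorem~\ref{t: MyMichelMultivariate}, applied to a two-dimensional exceedance set, combined with an auxiliary-point argument to obtain a bound uniform in $(k^\star, \ell^\star)$.

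For integers $k, \ell \in \mathbb{Z}_+$ set $A_{k,\ell} = \{(x,y) \in \mathbb{Z}_+^2 : x \ge k \text{ or } y \ge \ell\}$. Then $\{X_{(n)} < k, Y_{(n)} < \ell\} = \{W_{A_{k,\ell}} = 0\}$ for $W_{A_{k,\ell}} = \sum_{i=1}^n I_{\{\mathbf{Z}_i \in A_{k,\ell}\}}$, so Theorem~\ref{t: MyMichelMultivariate} yields
\[
\left| P(X_{(n)} < k, Y_{(n)} < \ell) - e^{-n P(\mathbf{Z} \in A_{k,\ell})} \right| \le P(\mathbf{Z} \in A_{k,\ell}),
\]
with $P(\mathbf{Z} \in A_{k,\ell}) = q_1^k + q_2^\ell - p_{00}^{\min(k,\ell)} q_1^{(k-\ell)_+} q_2^{(\ell-k)_+}$ by inclusion-exclusion and (\ref{d: MO_survival}). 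Setting $k = \lceil(\log n + k^\star)/\log(1/p_{00})\rceil$ and writing $k^\star_* = k \log(1/p_{00}) - \log n \ge k^\star$ (and similarly for $\ell, \ell^\star$), integer-valuedness of $X, Y$ gives the exact identity $P(X_{(n)}^\star < k^\star, Y_{(n)}^\star < \ell^\star) = P(X_{(n)} < k, Y_{(n)} < \ell)$, and a short computation with $\alpha_i = \log q_i / \log p_{00}$ identifies the joint-shock term $n p_{00}^{\min(k,\ell)} q_1^{(k-\ell)_+} q_2^{(\ell-k)_+}$ with the exponent of $H(k^\star_*, \ell^\star_*)$ in each of the three orderings of $k, \ell$.

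The total error then splits into (i) the Poisson error $P(\mathbf{Z} \in A_{k,\ell})$ above, (ii) a Gumbel gap $|e^{-n P(\mathbf{Z} \in A_{k,\ell})} - H(k^\star_*, \ell^\star_*)|$ arising from the marginal remainders $n q_1^k, n q_2^\ell$, and (iii) the discretisation error $|H(k^\star_*, \ell^\star_*) - H(k^\star, \ell^\star)|$, handled by the piecewise Lipschitz continuity of $H$ together with $k^\star_* - k^\star \le \log(1/p_{00})$. For uniform control over $(k^\star, \ell^\star) \in [-\log n, \infty)^2$ I would introduce, as in (a), an auxiliary cutoff $x_0$, tuned here so that $e^{-e^{-x_0}} \le e^{-\sqrt{\log n}}$ (roughly $x_0 \approx -\tfrac{1}{2}\log\log n$). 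Above the cutoff, (i) and (ii) together are bounded by $(1/n)^\alpha \log n/p_{00}$ with $\alpha = \log\min(q_1,q_2)/\log p_{00}$; below the cutoff, monotonicity of the joint distribution function and of $H$ in each coordinate reduces both $P(X_{(n)}^\star < k^\star, Y_{(n)}^\star < \ell^\star)$ and $H(k^\star, \ell^\star)$ to their values at $(x_0, x_0)$, which are of order $e^{-\sqrt{\log n}}$.

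The main obstacle is step~(ii): the marginal remainders $n q_1^k, n q_2^\ell$ are of order $n^{1-\alpha_i}$ with $\alpha_i \in (0,1]$, so in the regime $p_{00} < \min(q_1,q_2)$ they do not automatically vanish and must be absorbed into $(1/n)^\alpha \log n/p_{00}$ via the elementary inequality $|e^{-u} - e^{-v}| \le |u - v|$. This forces the choice $\alpha = \log\min(q_1,q_2)/\log p_{00}$ and a careful calibration of $x_0$ against the rates $\alpha_1, \alpha_2$; the diagonal case $k = \ell$ reduces essentially to Proposition~\ref{t: max_geo_d_K}(a) applied to the joint-shock geometric variable with failure probability $p_{00}$, while the two off-diagonal orderings require slightly different bookkeeping that converges to the same final estimate.
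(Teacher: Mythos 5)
Your identification $\{X_{(n)}<k,\,Y_{(n)}<\ell\}=\{W_{A_{k,\ell}}=0\}$ with the union set $A_{k,\ell}=\{x\ge k\}\cup\{y\ge \ell\}$ is the natural one, but it is not the route the paper takes, and it cannot reach the stated bound: your step (ii) fails. With the common normalisation $k=(k^\star+\log n)/\log(1/p_{00})$ one has $nq_1^k=n^{1-\alpha_1}\mathrm{e}^{-\alpha_1 k^\star}$ with $\alpha_1=\log q_1/\log p_{00}\in(0,1]$, and similarly for $nq_2^\ell$. Since $q_i\ge p_{00}$ always (with strict inequality whenever $p_{01},p_{10}>0$), these marginal remainders do not vanish: they are of order $1$ in the degenerate case and diverge otherwise. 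Hence your ``Gumbel gap'' $\left|\mathrm{e}^{-nP(\mathbf{Z}\in A_{k,\ell})}-H(k^\star_*,\ell^\star_*)\right|$ tends to $H(k^\star,\ell^\star)>0$ rather than to $0$, and the inequality $|\mathrm{e}^{-u}-\mathrm{e}^{-v}|\le |u-v|$ only yields the diverging quantity $nq_1^k+nq_2^\ell$, which cannot be absorbed into $(1/n)^{\alpha}\log n/p_{00}$. No calibration of the cutoff $x_0$ repairs this, because the gap is already of order $1$ at a fixed point $(k^\star,\ell^\star)$ above any cutoff.

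The paper's proof avoids this entirely: it applies Theorem~\ref{t: MyMichelMultivariate} to the product set $A=[k,\infty)\times[\ell,\infty)$, for which $nP(\mathbf{Z}\in A)$ is, by \eqref{d: MO_survival}, exactly the exponent of $H$ at lattice points, so there is no marginal remainder and no analogue of your step (ii); the only remaining work is the uniformisation over $(k^\star,\ell^\star)$ via auxiliary points $x_0,y_0$ of order $-\log\log n$ (which produces the $\mathrm{e}^{-\sqrt{\log n}}$ term), and that part of your plan is in the right spirit. If you insist on the union set, the object you would be approximating is $\exp\{-nP(X\ge k\text{ or }Y\ge \ell)\}$, a genuinely different function from $H$ which degenerates under this normalisation; this is precisely why the limit $H$ here differs from those of \cite{Mitov/Nadarajah:2005}, and why your route cannot be patched to prove the proposition as stated.
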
 
We remark here that the 
limit distribution $H$ is different from the ones obtained in \cite{Mitov/Nadarajah:2005}, who used different normalisations. The idea underlying the proof of Proposition~\ref{p: Ale} is to apply Theorem~\ref{t: MyMichelMultivariate} to $A=[k,\infty) \times[\ell,\infty)$ and $W_A=\sum_{i=1}^{n} I_{\{\mathbf{Z}_i \in A\}}$ as follows:
\[
 \left| P(W_A=0) - \mathrm e^{-nP(\mathbf{Z}\in A)}\right| \le P(\mathbf{Z} \in A).
\]
The limit distribution $H$ defined in Proposition~\ref{p: Ale} thus corresponds to $\mathrm e^{-nP(X^\star \ge k^\star, Y^\star \ge \ell^\star)}$.
\begin{proof}
\begin{enumerate}[1. ]
\item If $k^\star=\ell^\star$, we choose the auxiliary threshold 
$
x_0:=- \log \log n
$.
For all $k$ such that $k^\star\ge x_0$, 
Theorem~\ref{t: MyMichelMultivariate} gives 
\begin{equation}\label{p: Max_Geo_error_2}
\left| P\left(  X^\star_{(n)}< k^\star,\, Y^\star_{(n)}< k^\star\right) - \mathrm{e}^{-\mathrm{e}^{-k^\star}} \right| \le 
{\frac{\mathrm{e}^{-k^\star}}{n} \le } 
\frac{\mathrm{e}^{-x_0}}{n}\le \frac{\log n}{n}. 
\end{equation}
For $k$ such that $k^\star \le x_0$, we may 
bound the error in (\ref{p: Max_Geo_error_2}) (i.e. the {absolute value in}
(\ref{p: Max_Geo_error_2}))
by further adding the limit distribution to the error {bound} at $m^\star$, where 
$m := \lfloor y_0 \rfloor $ and $y_0:= (\log n + x_0)/\log(1/p_{00}) $,
i.e. 
\[
\left| P\left(  X^\star_{(n)}< k^\star,\, Y^\star_{(n)}< k^\star\right) - \mathrm{e}^{-\mathrm{e}^{-k^\star}} \right|  
\le \frac{\mathrm{e}^{-m^\star}}{n}  + \mathrm{e}^{-\mathrm{e}^{-m^\star}}  
\le \frac{\mathrm{e}^{-x_0}}{n\,p_{00}} + \mathrm{e}^{-\mathrm{e}^{-x_0}} \le \frac{\log n}{n\,p_{00}} + \frac{1}{n},
\]
where we used 
{$x_0 - \log(1/p_{00}) \le m^\star \le x_0$} in the second inequality. 
\item Suppose instead $k^\star<\ell^\star$ (the case $k^\star>\ell^\star$ is symmetric). 
By Theorem~\ref{t: MyMichelMultivariate} we have 
\begin{eqnarray}
&&\left|P\left(  X^\star_{(n)}< k^\star,\, Y^\star_{(n)}< \ell^\star\right) -
\mathrm{e}^{-\mathrm{e}^{-\frac{\log(p_{00}/q_2)}{\log p_{00}}\,k^\star - \frac{\log q_2}{\log p_{00}}\,\ell^\star}}
\right|
\le \frac{1}{n}\,
\mathrm{e}^{-\frac{\log(p_{00}/q_2)}{\log p_{00}}\,k^\star - \frac{\log q_2}{\log p_{00}}\,\ell^\star}
.\qquad\qquad\label{eq:ineq}
\end{eqnarray}
We first assume
\begin{equation}\label{eq:cond_p00}
p_{00}\ge q_2^2.
\end{equation}
Choose the following auxiliary points:
$$
x_0:=-\frac{\log p_{00}}{ 2\log\left(p_{00}/ q_2\right)}\log\log n,\quad y_0:=-\frac{\log p_{00}}{2\log q_2}\log\log n.
$$
Note that, under \eqref{eq:cond_p00}, $x_0\le y_0$. Hence:
\begin{enumerate}
\item If $k^\star\ge x_0$, $\ell^\star\ge y_0$, then
$$
\frac{1}{n}\,
\mathrm{e}^{-\frac{\log(p_{00}/q_2)}{\log p_{00}}\,k^\star - \frac{\log q_2}{\log p_{00}}\,\ell^\star}
\le \frac{\log n}{n}.
$$
\item If $k^\star\le x_0$, $\ell^\star\le y_0$, then the error in (\ref{eq:ineq}) may be bounded by further adding the limit distribution to the error {bound }at 
$(\kappa^\star,\,\lambda^\star)$, with
$$
\kappa:=\left\lfloor\frac{x_0+\log n}{\log\left(1/p_{00}\right)} \right\rfloor,\quad \lambda:=\left\lfloor\frac{y_0+\log n}{\log\left(1/p_{00}\right)}\right\rfloor,
$$
{
i.e. the error in \eqref{eq:ineq} is bounded by
\begin{eqnarray*}
&&\frac{1}{n}\,\mathrm{e}^{-\frac{\log (p_{00}/q_2)}{\log p_{00}}\kappa^\star -\frac{\log q_2}{\log p_{00}}\lambda^\star}
+ \mathrm{e}^{-\mathrm{e}^{-\frac{\log (p_{00}/q_2)}{\log p_{00}}\kappa^\star-\frac{\log q_2}{\log p_{00}}\lambda^\star}}\\
&&\qquad \le \frac{1}{n}\,\mathrm{e}^{-\frac{\log (p_{00}/q_2)}{\log p_{00}}x_0-\frac{\log q_2}{\log p_{00}}y_0 - \log p_{00}}
+ \mathrm{e}^{-\mathrm{e}^{-\frac{\log (p_{00}/q_2)}{\log p_{00}}x_0-\frac{\log q_2}{\log p_{00}}y_0}}
=\frac{\log n}{n p_{00}} + \frac{1}{n}\,,
\end{eqnarray*}
where we used $x_0 - \log(1/p_{00}) \le \kappa^\star \le x_0$ and $y_0-\log(1/p_{00}) \le \lambda^\star \le y_0$ in the inequality.
}
\item If $k^\star\ge x_0$, $\ell^\star\le y_0$, then we may 
bound the error in \eqref{eq:ineq}
by further adding
the limit distribution to the error {bound}
at the point $(\lambda^\star,\,\lambda^\star)$.
This yields an error bound of
\begin{eqnarray*}
&&{
\frac{1}{n}\,\mathrm{e}^{-\frac{\log (p_{00}/q_2)}{\log p_{00}}\lambda^\star -\frac{\log q_2}{\log p_{00}}\lambda^\star}
+ \mathrm{e}^{-\mathrm{e}^{-\frac{\log (p_{00}/q_2)}{\log p_{00}}\lambda^\star-\frac{\log q_2}{\log p_{00}}\lambda^\star}}
 = \frac{1}{n}\,\mathrm{e}^{-\lambda^\star}+\mathrm{e}^{-\mathrm{e}^{-\lambda^\star}}
 }\\
&&\le \frac1n\, \e^{-y_0+\log (1/{p_{00}})}+\e^{-\e^{-y_0}}=\frac{ \left(  \log n\right)^{  \frac{ \log p_{00}   }{  2\log q_2  }  }   }{        np_{00}}+\e^{- \left(  \log n\right)^{  \frac{ \log p_{00}   }{  2\log q_2  } }}\le \frac{\log n}{np_{00}}+\e^{-\sqrt{\log n}},
\end{eqnarray*}
where we {used $y_0-\log(1/p_{00}) \le \lambda^\star \le y_0$, and}
$\frac12\le\left({\log p_{00}}\right)/\left({2\log q_{2}}\right)\le 1$ since $q_2^2\le p_{00}\le q_2$.
\item 
If $k^\star\le x_0$, $\ell^\star\ge y_0$, the error bound is given by
\begin{eqnarray*}
&&\frac1n\,\e^{-\frac{\log\left(p_{00}/q_2 \right)     }{ \log p_{00}      }k^\star-\frac{\log q_2    }{ \log p_{00}      }\ell^\star}
\le \frac1n\,\e^{\frac{\log\left(p_{00}/q_2 \right)     }{ \log p_{00}      }\log n-\frac{\log q_2    }{ \log p_{00}      }y_0}=\frac{\sqrt{\log n}}{n^{\frac{\log q_2    }{ \log p_{00}      }}}\le \sqrt{\frac{\log n}{n}}\,,
\end{eqnarray*}
since $k^\star\ge -\log n$ and $\frac{\log q_2}{\log p_{00}}\ge \frac12$ by \eqref{eq:cond_p00}.
\end{enumerate}
{Therefore, if \eqref{eq:cond_p00} holds, an overall bound on the error in \eqref{eq:ineq} is given by $\frac{\log n}{np_{00}} + e^{-\sqrt{\log n}}$.}
If instead $p_{00}\le q_2^2$ (hence $x_0\ge y_0$), we consider only the cases (a), (b), (d) as above (since we must have $k^\ast < \ell^\star$). Hence
\begin{enumerate}[i. ]
\item If $k^\star\ge x_0$, $\ell^\star\ge x_0$, then the error bound is given by
\begin{eqnarray*}
&&\frac{1}{n}\,\mathrm{e}^{-\frac{\log\left(p_{00}/q_2 \right)}{ \log p_{00}     }k^\star-\frac{\log q_2}{ \log p_{00}     }\ell^\star}
\le\frac{1}{n}\,  \mathrm{e}^{-\frac{\log\left(p_{00}/q_2 \right)}{ \log p_{00}     }x_0-\frac{\log q_2}{ \log p_{00}     }x_0}
=\frac{\e^{-x_0}}{n}
\le\frac{\log n}{n}\,.
\end{eqnarray*}
Here we have used the fact that 
$p_{00}\le q_2^2$ 
implies $\frac{\log p_{00}}{2{\log\left(p_{00}/q_2\right)}}\le 1$.
\item If $k^\star\le x_0$, $\ell^\star\le x_0$, then, 
choosing $\kappa:=\lambda:=
\left\lfloor\frac{x_0+\log n}{\log\left(1/p_{00}\right)} \right\rfloor$,
{the error in \eqref{eq:ineq} may be bounded by}
\begin{eqnarray*}
&&\frac1n\, \e^{-\frac{\log \left(p_{00}/q_2\right)}{\log p_{00}}\kappa^\star-\frac{\log q_2}{\log p_{00}}\kappa^\star}+\e^{-\e^{-\frac{\log \left(p_{00}/q_2\right)}{\log p_{00}}\kappa^\star- \frac{\log q_2}{\log p_{00}}\kappa^\star   }}=\frac{\e^{-\kappa^\star}}{n}+\e^{-\e^{-\kappa^\star}}\\
&&\le \frac{\e^{-x_0}}{n p_{00}}+\e^{-\e^{-x_0}}=\frac{(\log n)^{\frac{\log p_{00}}{2{\log\left(p_{00}/q_2\right)}}}}{n p_{00}}+\e^{-\frac{\log p_{00}}{2{\log\left(p_{00}/q_2\right)}}}
\le \frac{\log n}{n p_{00}}+\e^{-\sqrt{\log n}},
\end{eqnarray*}
where we used $x_0 - \log(1/p_{00}) \le \kappa^\star \le x_0$ in the first inequality and $q_2^2\ge p_{00}$ in the 
{second.}
\item If $k^\star\le x_0$, $\ell^\star\ge x_0$, as in 
{(d)} we obtain
\begin{eqnarray*}
&&\frac{1}{n}\,\mathrm{e}^{-\frac{\log\left(p_{00}/q_2 \right)}{ \log p_{00}     }k^\star-\frac{\log q_2}{ \log p_{00}     }\ell^\star}
\le\frac{1}{n}\,  \mathrm{e}^{\frac{\log\left(p_{00}/q_2 \right)}{ \log p_{00}     }\log n-\frac{\log q_2}{ \log p_{00}     }x_0}
\le \frac{\mathrm{e}^{-\frac{\log q_2}{ \log p_{00}}x_0}}{n^{\frac{\log q_2}{ \log p_{00}   }}}
=\frac{\sqrt{\log n}}{n^{\frac{\log q_2}{ \log p_{00}   }}}\,.
\end{eqnarray*}
\end{enumerate}
{
Thus, if $p_{00} \le q_2^2$, an overall bound on the error in \eqref{eq:ineq} is given by
$\frac{\log n}{n^{\frac{\log q_2}{ \log p_{00}}}p_{00}}+\mathrm{e}^{-\sqrt{\log n}}\,.
$}
\end{enumerate}
\end{proof}
\section{
Rates of convergence for MPPEs with geometric marks}\label{Sec: MO_Geo}
\subsection{Univariate geometric marks
}\label{s: MPPE_geo}
In Proposition \ref{t: max_geo_d_K} we demonstrated that for maxima of geometric random variables the approximation by a discretised Gumbel distribution living on
lattice points $k^\star$ gives a smaller error than the approximation by a continuous Gumbel distribution on $\mathbb{R}$. For the latter approximation to be sharp,
we need the condition that the failure probability $q_n$ depends on $n$ in such a way that $1-q_n = o(1/\log n)$ for $n \to \infty$. We encounter a similar behaviour 
when approximating an MPPE with geometric marks, defined by 
$\Xi_A$ $:=$ $\sum_{i=1}^n$ $I_{\{X_i \in A\}}\delta_{X_i}$,
by a Poisson process. The set $A \in \mathcal{B}([0, \infty)^2)$ will, in all further applications, be chosen such that points falling into $A$ can be considered ``extreme". We consider here the MPPE 
\begin{equation}\label{Ferrero Rocher}
\Xi^\star_{A^\star} := \sum_{i=1}^n I_{\{X^\star_i \in A^\star\}}\delta_{X^\star_i},
\end{equation}
the normalised version of (\ref{Ferrero Rocher}) where the marks are subject to the normalisation used in Proposition \ref{t: max_geo_d_K} (a). Proposition \ref{t: Geo_MPPE_dTV} below gives the error in total variation of the 
approximation of the law of $\Xi^\star_{A^\star}$ by a Poisson process with mean measure living on the lattice $E^\star$ of normalised points $k^\star$. On the other hand, Proposition \ref{t: Geo_MPPE_d2} 
determines the error of the approximation by a Poisson process with an easy-to-use continuous mean measure, and uses the $d_2$-metric to achieve this.
\begin{prop}\label{t: Geo_MPPE_dTV}
For each integer $n \ge 1$, let $X_1, \ldots, X_n$ be \iid geometric random variables 
with failure probability $q\in (0,1)$ and $P(X_1 \ge y)= q^{\lceil y \rceil}$, for any $y \ge 0$. 
Define the normalised random variables $X_i^\star = \log(1/q) X_i - \log n$, $i = 1, \ldots, n$, taking values in 
$E^\star=\log(1/q)\mathbb{Z}_+ -\log n$. 
Let $A^\star = [u^\star, \infty)$ for any choice of $u^\star \in [-\log n, \infty)$, 
and let $\Xi^\star_{A^\star}$
be defined as in (\ref{Ferrero Rocher}). Then the mean measure of $\Xi^\star_{A^\star}$ is given by 
\begin{equation}\label{t: mean_meas_normalised_MPPE}
\bpis(B^\star) = \sum_{k^\star \in A^\star \cap E^\star \cap B^\star}(1-q)\mathrm{e}^{-k^\star}, \quad   \textit{ for any } B^\star \in \mathcal{B}([-\log n, \infty)), 
\end{equation}
and
\[
d_{TV}\left(\mathcal{L}(\Xi^\star_{A^\star}), \mathrm{PRM}(\bpis)\right) \le \frac{\mathrm{e}^{- u^\star}}{n}\,. 
\] 
\end{prop}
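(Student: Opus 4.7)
The plan is to apply Theorem~\ref{t: MyMichelMultivariate} directly with $\mathbf{X}_i=X_i^\star$ and the set $A^\star$. Since $\Xi^\star_{A^\star}=\sum_{i=1}^n I_{\{X_i^\star\in A^\star\}}\delta_{X_i^\star}$ is exactly the $\Xi_A$ from that theorem, this immediately gives
\[
d_{TV}\bigl(\mathcal{L}(\Xi^\star_{A^\star}),\,\mathrm{PRM}(\mathbb{E}\Xi^\star_{A^\star})\bigr)\le P(X_1^\star\in A^\star),
\]
and it remains to identify $\mathbb{E}\Xi^\star_{A^\star}$ with the claimed $\bpis$ and to bound $P(X_1^\star\in A^\star)$ by $\mathrm{e}^{-u^\star}/n$.

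For the mean measure, I would observe that $X_1^\star$ is supported on the lattice $E^\star=\log(1/q)\mathbb{Z}_+-\log n$, and that for any $k^\star=\log(1/q)\,k-\log n\in E^\star$ one has
\[
P(X_1^\star=k^\star)=P(X_1=k)=(1-q)q^k=(1-q)\mathrm{e}^{-k\log(1/q)}=\frac{(1-q)\mathrm{e}^{-k^\star}}{n}.
\]
Then by linearity,
$\mathbb{E}\Xi^\star_{A^\star}(B^\star)=nP(X_1^\star\in A^\star\cap B^\star)=\sum_{k^\star\in A^\star\cap E^\star\cap B^\star}(1-q)\mathrm{e}^{-k^\star},$
which is the asserted expression \eqref{t: mean_meas_normalised_MPPE} for $\bpis$.

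Finally, to estimate $P(X_1^\star\in A^\star)$, I would locate the smallest lattice point $k^\star_0\in E^\star$ with $k^\star_0\ge u^\star$, namely the image of $k_0=\lceil(u^\star+\log n)/\log(1/q)\rceil$. Using the survival function $P(X_1\ge k_0)=q^{k_0}$ and unwinding the normalisation yields
\[
P(X_1^\star\in A^\star)=P(X_1\ge k_0)=\mathrm{e}^{-k^\star_0}/n\le \mathrm{e}^{-u^\star}/n,
\]
since $k^\star_0\ge u^\star$. Combining this with the inequality from Theorem~\ref{t: MyMichelMultivariate} completes the proof.

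There is no serious obstacle here; the only point requiring care is the translation between the lattice parameter $k$ and the normalised value $k^\star$, together with the observation that $X_1^\star$ places all its mass on $E^\star$ (so intersecting with $E^\star$ in the formula for $\bpis$ is automatic once one uses $\mathbb{E}\Xi^\star_{A^\star}$). Everything else is a direct appeal to Theorem~\ref{t: MyMichelMultivariate}.
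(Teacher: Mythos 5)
Your argument is correct and follows essentially the same route as the paper: apply Theorem~\ref{t: MyMichelMultivariate} to the normalised variables, compute the mean measure via $P(X_1^\star=k^\star)=(1-q)\mathrm{e}^{-k^\star}/n$, and bound $P(X_1^\star\in A^\star)=q^{\lceil (u^\star+\log n)/\log(1/q)\rceil}\le \mathrm{e}^{-u^\star}/n$. Your explicit identification of the smallest lattice point $k_0^\star\ge u^\star$ is just a slightly more verbose phrasing of the paper's ceiling-function step.
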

\begin{proof}
For all $k \in \mathbb{Z}_+$, we use the normalisation $k=(k^\star + \log n)/\log(1/q)$, where 
$k^\star$ $ \in E^\star$. We then have
$P(X_1 =k)$ $=$ $(1-q)q^k$ $=$ $(1-q)\, \frac{\mathrm{e}^{-k^\star}}{n}$ $=$ $P(X^\star_1 = k^\star)$, 
and, for any $B^\star \in \mathcal{B}([-\log n, \infty))$, 
\begin{equation}\label{p: mean_meas_geo_discrete}
\bpis(B^\star) = nP(X^\star \in A^\star \cap B^\star) = \sum_{k^\star \in A^\star \cap E^\star \cap B^\star} nP(X_1^\star = k^\star) =
\sum_{k^\star \in A^\star \cap E^\star \cap B^\star} (1-q)\mathrm{e}^{-k^\star}.
\end{equation}
Using Theorem \ref{t: MyMichelMultivariate}, we obtain 
\[
d_{TV}(\mathcal{L}(\Xi^\star_{A^\star}), \mathrm{PRM}(\bpis)) \le P(X^\star_1 \ge u^\star) = 
P\left(X_1 \ge  \frac{u^\star + \log n}{\log(1/q)}  \right) 
= q^{\left \lceil \frac{u^\star + \log n}{\log(1/q)} \right \rceil} \le \frac{\mathrm{e}^{-u^\star}}{n}\,.
\]

\end{proof}
The following proposition now uses the $d_2$-metric to approximate the MPPE with geometric marks by a Poisson process with continuous intensity,
as the total variation metric is too strong to achieve this. The continuous intensity measure we aim for is the same as that of an MPPE with 
exponential marks. The result is achieved in two steps: we first estimate the error in the $d_2$-distance of the approximation 
by a Poisson process with mean measure given by (\ref{t: mean_meas_normalised_MPPE}), and then compare this Poisson process by another one with the desired continuous
mean measure, again in the $d_2$-distance, by making use of Proposition \ref{t: d2_two_PRM}.
We assume here that
$d_0$ is the Euclidean distance on $\mathbb{R}$ bounded by $1$, i.e. $d_0(z_1, z_2) = \min(|z_1-z_2|, 1)$ for any $z_1, z_2 \in \mathbb{R}$, 
and define the $d_1$- and $d_2$-distances as in (\ref{d: def_d1}) and (\ref{d: def_d2}), respectively,
in Subsection \ref{subsec:dist_meas}.

\begin{prop}\label{t: Geo_MPPE_d2}
For each integer $n \ge 1$, let $X_i$, $X_i^\star$, $i=1, \ldots, n$, and $E^\star$ be defined as in Proposition
\ref{t: Geo_MPPE_dTV}. Let $A^\star = [u^\star, \infty)$ for any choice of $u^\star \in E^\star$, let $\Xi^\star_{A^\star}$ be defined as in 
(\ref{Ferrero Rocher})
with mean measure $\bpis$ as in (\ref{t: mean_meas_normalised_MPPE}),
and define the continuous measure
$\bls(B^\star)$ $=$ $\int_{A^\star \cap B^\star} \mathrm{e}^{-x} \De x$, 
for any 
$B^\star \in \mathcal{B}([-\log n, \infty))$.
Then 
\[
d_2\left(\mathcal{L}(\Xi^\star_{A^\star}), \mathrm{PRM}(\bls)\right) 
\le \frac{\mathrm{e}^{-u^\star}}{n} +  \min\left\{\log\left(1/q\right),\, 1\right\}. 
\]
\end{prop}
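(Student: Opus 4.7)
The plan is to use the triangle inequality for $d_2$, inserting the intermediate Poisson process $\mathrm{PRM}(\bpis)$ from Proposition~\ref{t: Geo_MPPE_dTV}, so that
\[
d_2\left(\mathcal{L}(\Xi^\star_{A^\star}), \mathrm{PRM}(\bls)\right) \le d_2\left(\mathcal{L}(\Xi^\star_{A^\star}), \mathrm{PRM}(\bpis)\right) + d_2\left(\mathrm{PRM}(\bpis), \mathrm{PRM}(\bls)\right).
\]
The first summand is easy: since $d_2 \le d_{TV}$, Proposition~\ref{t: Geo_MPPE_dTV} immediately bounds it by $\mathrm{e}^{-u^\star}/n$, which accounts for the first term in the target bound.

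For the second summand I would invoke Proposition~\ref{t: d2_two_PRM}, so I first need to check that $\bpis$ and $\bls$ have the same total mass. Writing $u^\star = \log(1/q) m - \log n$ for some $m \in \mathbb{Z}_+$ (which is legitimate because $u^\star \in E^\star$), a short geometric-series calculation gives $\bpis(E^\star) = n q^m = \mathrm{e}^{-u^\star}$, and $\bls(E^\star) = \int_{u^\star}^\infty \mathrm{e}^{-x}\De x = \mathrm{e}^{-u^\star}$ as well. By Proposition~\ref{t: d2_two_PRM}, it then suffices to bound $d_1(\bpis, \bls)$, with the prefactor $(1-\mathrm{e}^{-\lambda}) \le 1$ absorbing gracefully.

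The main (though still fairly routine) step is estimating $d_1(\bpis, \bls)$. For an arbitrary $\kappa \in \mathcal{K}$, I would partition the integration region $[u^\star, \infty)$ into lattice cells $[k^\star, k^\star + \log(1/q))$ for $k \ge m$. On each cell $\int_{k^\star}^{k^\star+\log(1/q)} \mathrm{e}^{-x}\De x = (1-q)\mathrm{e}^{-k^\star}$, which is exactly the point mass of $\bpis$ at $k^\star$. Consequently
\[
\left| \int \kappa \De\bpis - \int \kappa \De\bls \right| = \left| \sum_{k \ge m} \int_{k^\star}^{k^\star + \log(1/q)} \mathrm{e}^{-x}\bigl(\kappa(k^\star) - \kappa(x)\bigr)\De x \right|.
\]
The Lipschitz property of $\kappa$ combined with the fact that $d_0$ is bounded by $1$ yields $|\kappa(k^\star)-\kappa(x)| \le s_1(\kappa)\min\{\log(1/q),1\}$ on each cell. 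Summing the integrals $\int \mathrm{e}^{-x}\De x$ over all cells gives back the total mass $\mathrm{e}^{-u^\star}$, which cancels the $1/\bpis(E)$ normalisation in the definition of $d_1$, leaving exactly $\min\{\log(1/q),1\}$.

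The only conceptual subtlety I anticipate is ensuring the lattice-cell alignment of $\bpis$ with the continuous measure $\bls$ — in particular that $u^\star \in E^\star$ forces the partition to start cleanly at $u^\star$, with no boundary leftover. Once this is noted, the estimate $d_1(\bpis,\bls) \le \min\{\log(1/q),1\}$ follows, and combining the two $d_2$-bounds via the triangle inequality yields the stated error.
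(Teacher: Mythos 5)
Your proposal is correct and follows essentially the same route as the paper's proof: the triangle inequality through $\mathrm{PRM}(\bpis)$, the $d_{TV}$ bound from Proposition~\ref{t: Geo_MPPE_dTV} for the first term, Proposition~\ref{t: d2_two_PRM} after checking $\bpis(A^\star)=\bls(A^\star)=\mathrm{e}^{-u^\star}$, and the cell-by-cell Lipschitz estimate giving $d_1(\bpis,\bls)\le\min\{\log(1/q),1\}$. The alignment point you flag (that $u^\star\in E^\star$ makes the partition start cleanly) is exactly why the paper also assumes $u^\star\in E^\star$ here.
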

\begin{proof}
We have
\[
d_2\left(\mathcal{L}(\Xi^\star_{A^\star}), \mathrm{PRM}(\bls)\right) 
\le d_2\left(\mathcal{L}(\Xi^\star_{A^\star}), \mathrm{PRM}(\bpis)\right) + d_2\left(\mathrm{PRM}(\bpis), \mathrm{PRM}(\bls)\right),
\]
where, by Proposition \ref{t: Geo_MPPE_dTV},
\[
 d_2\left(\mathcal{L}(\Xi^\star_{A^\star}), \mathrm{PRM}(\bpis\right) \le  d_{TV}\left(\mathcal{L}(\Xi^\star_{A^\star}), \mathrm{PRM}(\bpis)\right)
\le \frac{\mathrm{e}^{-u^\star}}{n}\,. 
\]
It thus remains to determine an estimate of $d_2\left(\mathrm{PRM}(\bpis), \mathrm{PRM}(\bls)\right)$. Since 
$\bls (A^\star)$ $=$ $\int_{u^\star}^\infty \mathrm{e}^{-x} \De x$ $=$ $\mathrm{e}^{-u^\star}$ $=$ $\bpis(A^\star)$, 
Proposition \ref{t: d2_two_PRM} gives
\begin{equation}\label{p: d2_to_d1_geo}
d_2\left(\mathrm{PRM}(\bpis), \mathrm{PRM}(\bls)\right) 
\le \left(1-\mathrm{e}^{-\mathrm{e}^{-u^\star}}\right)d_1(\bpis, \bls) \le d_1(\bpis, \bls). 
\end{equation}
By Definition (\ref{d: def_d1}) of the $d_1$-distance,
\begin{equation}\label{p: d_1_geo}
d_1(\bpis, \bls) = \mathrm{e}^{u^\star}\, \sup_{\kappa \in \mathcal{K}}\,  \, 
\left| \int_{-\log n}^\infty \kappa(x) \bpis(\De x) - \int_{-\log n}^\infty \kappa(x) \bls(\De x)\right|. 
\end{equation}
We may write the two integrals in the above expression as a sum of integrals over the ``normalised unit intervals'' 
$[k^\star, (k+1)^\star) = [k^\star, k^\star + \log(1/q))$, for all $k^\star \in E^\star \cap [u^\star, \infty)$. The modulus then
equals
\begin{equation}\label{p: mod_geo_d1}
\left| \sum_{k^\star \ge u^\star} \left\{ \int_{k^\star}^{k^\star + \log(1/q)} \kappa(x) \bpis(\De x) 
- \int_{k^\star}^{k^\star + \log(1/q)} \kappa(x) \bls(\De x)
\right\} \right| .
\end{equation}
Since $\bpis$ is concentrated on the lattice points $k^\star \in E^\star \cap [u^\star, \infty)$, we have
\[
\int_{k^\star}^{k^\star + \log(1/q)} \kappa(x) \bpis(\De x) 
= \kappa(k^\star) \bpis(\{k^\star\}) = \kappa(k^\star) (1-q)\mathrm{e}^{-k^\star}.
\]
Note that we obtain the same result by computing
\[
 \int_{k^\star}^{k^\star + \log(1/q)} \kappa(k^\star) \bls(\De x) 
= \kappa(k^\star)  \int_{k^\star}^{k^\star + \log(1/q)} \mathrm{e}^{-x} \De x 
= \kappa(k^\star) (1-q)\mathrm{e}^{-k^\star}.
\]
We may thus express (\ref{p: mod_geo_d1}) as follows:
\[
\left| \sum_{k^\star \ge u^\star}  \int_{k^\star}^{k^\star + \log(1/q)} 
\left\{ \kappa(k^\star) - \kappa(x) \right\}\bls(\De x) 
\right|
\le  \sum_{k^\star \ge u^\star} \int_{k^\star}^{k^\star + \log(1/q)}
\left|  \kappa(k^\star) - \kappa(x) \right| \bls(\De x). 
\]
From Lipschitz continuity of $\kappa$, we know that $|\kappa(k^\star) - \kappa(x)| \le s_1(\kappa) d_0(k^\star, x)$
for any $x \in [k^\star, k^\star + \log(1/q))$, where $k^\star \in E^\star \cap [u^\star, \infty)$. The maximum Euclidean distance between $k^\star$ and any point in 
$[k^\star, k^\star + \log(1/q))$ is of course given by $\log(1/q)$. Since we bound $d_0$ by $1$, we have
\[
\left|  \kappa(k^\star) - \kappa(x) \right|  \le s_1(\kappa) \min\left\{\log(1/q), \,1 \right\}. 
\]
For the $d_1$-distance in (\ref{p: d_1_geo}) we now find, using $\bls([u^\star, \infty)) = \mathrm{e}^{-u^\star}$,
\begin{align*}
d_1(\bpis, \bls) 
\le \mathrm{e}^{u^\star} \sum_{k^\star \ge u^\star} \int_{k^\star}^{k^\star + \log(1/q)} \min\left\{ \log(1/q),\, 1\right\} \bls(\De y) 
 = \min\left\{\log(1/q),\,  1\right\},
\end{align*}
which we plug into (\ref{p: d2_to_d1_geo}) to obtain an estimate for $d_2(\mathrm{PRM}(\bpis), \mathrm{PRM}(\bls)) $.
\end{proof}
The approximation of $\mathcal{L}(\Xi^\star_{\As})$ by $\mathrm{PRM}(\bls)$, whose continuous intensity function $\mathrm{e}^{-x}$ corresponds to that of MPPEs with exponential marks, gives rise to an additional error term which depends only on the failure probability of the geometric distribution. The error will 
become small only if we allow the failure probability $q=q_n$ to tend to $1$ as $n \to \infty$. Since $\log(1/q_n)$ is the length of the normalised unit intervals, this condition causes the lattice structure to melt into the whole real subset $[-\log n, \infty)$ as $n \to \infty$.  Note that Proposition \ref{t: Geo_MPPE_d2} does not require $q_n$ to vary at a particular rate. The reason for that is that we chose the threshold $u_n^\star$ as element of the lattice $E^\star$. If we had not done so, we would have obtained an additional error term of size $\log(1/q_n)\mathrm{e}^{-u_n^\star}$. In this case, $q_n$ would have 
needed to vary at a fast enough rate to guarantee a small error despite the factor $\mathrm{e}^{-u_n^\star}$, which roughly corresponds to the expected number of exceedances and should thus be greater than $1$. We refer to Section \ref{s: MO_Geo_d2}, where we established the error estimate in full detail for MPPEs with bivariate geometric marks.  
\subsection{Bivariate Marshall-Olkin geometric marks
}
\subsubsection{Approximation in $d_{TV}$ by a Poisson process on a lattice}\label{s: MO_Geo_dTV}
For any integer $n \ge 1$, let $\mathbf{X}_1, \ldots, \mathbf{X}_n$ be \iid copies of the random pair 
$\mathbf{X} = (X_1,X_2)$, which follows the Marshall-Olkin geometric distribution 
from Section \ref{s: MO_Geo}.
We 
use the following normalisation for studying joint threshold exceedances, introduced in Section \ref{sec:max} above:
\begin{equation}\label{d: MO_Geo_normalisation_joint}
(k^\star, l^\star) = \left(k \log (1/p_{00}) - \log n \,,\, l \log (1/p_{00}) - \log n\right), \quad \text{for any } (k,l) \in \mathbb{Z}_+^2,  
\end{equation}
and denote by 
$E^\star$ 
the lattice of normalised points $(k^\star, l^\star)$.
The following proposition gives straightforward error estimates for the approximation of the law of $\Xi^\star_{A^\star}$ by that of a Poisson process
with mean measure $\mathbb{E}\Xi^\star_{A^\star}$, both for general sets $A^\star$, and for the particular choice $A^\star = [u_n^\star, \infty)^2$ (for which $\Xi^\star_{A^\star}$ captures joint threshold exceedances of the components of the normalised random pairs $\mathbf{X}_1^\star, \ldots, \mathbf{X}_n^\star$).
\begin{prop}\label{t: MO_Geo_dTV_lattice}
Suppose $\textbf{X}=(X_1,X_2)$ follows the Marshall-Olkin geometric distribution with parameters $q_1, q_2, p_{00} \in (0,1)$.   
For each integer $n \ge 1$, let $\mathbf{X}^\star_1, \ldots, \mathbf{X}^\star_n$ be \iid copies of the normalised random pair 
$\mathbf{X}^\star = (X^\star_1, X^\star_2)$ with state space $E^\star$, where $X_j^\star = \log(1/p_{00})X_j - \log n$, for $j=1,2$. 
Let $A^\star \in \mathcal{B}([0,\infty)^2)$ and let $\Xi^\star_{A^\star}$ and $W^\star_{A^\star}$ be defined as 
follows:
\[
\Xi^\star_{A^\star} =\sum_{i=1}^n I_{\left\{\textbf{X}^\star_i \in A^\star\right\}} \delta_{\mathbf{X}_i^\star}, \quad \text{and} \quad
W^\star_{A^\star}= \sum_{i=1}^n I_{\left\{\textbf{X}^\star_i \in A^\star\right\}}. 
\]
Then the mean measure of $\Xi^\star_{A^\star}$ is given by
\[
\bpis(B^\star) := \bpis_{\As}(B^\star) := \mathbb{E}\Xi^\star_{A^\star} (B^\star)= \sum_{(k^\star,l^\star) \in A^\star \cap E^\star \cap B^\star} nP(X_1^\star = k^\star, X_2^\star = l^\star), 
\]
for any $B^\star \in \mathcal{B}([-\log n, \infty)^2)$, where, for any $(k^\star, l^\star) \in E^\star$,
\begin{align}\label{d: MO_Geo_mass_fct_normalised}
&P\left(X_1^\star = k^\star, X_2^\star = l^\star\right) \nonumber \\
&= \left\{
\begin{array}{ll}
\frac{1}{n}\, (1-\frac{p_{00}}{q_2} - q_2 + p_{00})\,\mathrm{e}^{-\frac{\log(p_{00}/q_2)}{\log p_{00}} \, k^\star} \mathrm{e}^{- \frac{\log q_2}{\log p_{00}}\, l^\star}  & \textrm{ for } k^\star < l^\star , \\
\frac{1}{n}\,( 1-q_1 - q_2 + p_{00})\, \mathrm{e}^{-k^\star} \phantom{\mathrm{e}^{-\frac{\log(p_{00}/q_2)}{\log p_{00}} \, k^\star}}& \textrm{ for } k^\star=l^\star ,  \\
\frac{1}{n}\,(1-q_1 - \frac{p_{00}}{q_1} + p_{00} )\, \mathrm{e}^{-\frac{\log q_1}{\log p_{00}}\,k^\star} \mathrm{e}^{-\frac{\log(p_{00}/q_1)}{\log p_{00}}\,l^\star}& \textrm{ for } k^\star>l^\star ,
\end{array}
\right.
\end{align}
and 
$d_{TV}\left( \mathcal{L}\left(\Xi^\star_{A^\star} \right), \mathrm{PRM}(\bpis)\right)  \le P(\mathbf{X}^\star \in A^\star)$.
With $A^\star = A^\star_n = [u_n^\star, \infty)^2$ for any choice of $u_n^\star \ge -\log n$, we obtain
\begin{equation}\label{t: MO_Geo_dTV_basic_Poiapprox_appl}
d_{TV}\left( \mathcal{L}\left(\Xi^\star_{A^\star} \right), \mathrm{PRM}(\bpis)\right) 
\le \frac{\mathrm{e}^{-u^\star_n}}{n}\,.
\end{equation}
\end{prop}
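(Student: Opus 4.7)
The proof reduces to a change-of-variables computation plus a direct application of Theorem~\ref{t: MyMichelMultivariate}; no new probabilistic ideas are needed. I would organise it in three steps.

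First, I would derive formula~\eqref{d: MO_Geo_mass_fct_normalised} by substituting the inverse normalisation $k = (k^\star + \log n)/\log(1/p_{00})$ into the Marshall-Olkin pmf~\eqref{d: MO_pointprob}. The single key identity is $p_{00}^k = \mathrm{e}^{k\log p_{00}} = \mathrm{e}^{-(k^\star+\log n)} = \mathrm{e}^{-k^\star}/n$, and analogously for the off-diagonal factors. For example, in the case $k<l$ one rewrites $p_{00}^k q_2^{l-k} = \mathrm{e}^{k\log(p_{00}/q_2) + l\log q_2}$, then expresses $k$ and $l$ in terms of $k^\star,\ell^\star$. Using $\log(p_{00}/q_2)+\log q_2 = \log p_{00}$ the $\log n$ contributions collapse to a single factor $1/n$, and the remaining exponents become $-\tfrac{\log(p_{00}/q_2)}{\log p_{00}}k^\star - \tfrac{\log q_2}{\log p_{00}}\ell^\star$, matching the stated formula. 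The diagonal and $k>\ell$ cases are identical by symmetry.

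Second, I would write the mean measure of $\Xi^\star_{A^\star}$ directly. By linearity of expectation and the i.i.d.\ assumption,
\[
\bpis(B^\star) = \mathbb{E}\Xi^\star_{A^\star}(B^\star) = \sum_{i=1}^n P(\mathbf{X}^\star_i \in A^\star \cap B^\star) = nP(\mathbf{X}^\star \in A^\star \cap B^\star),
\]
and since $\mathbf{X}^\star$ is supported on $E^\star$, the probability is a sum of point masses $P(X_1^\star=k^\star, X_2^\star=\ell^\star)$ over $(k^\star,\ell^\star) \in A^\star \cap E^\star \cap B^\star$, which gives the claimed expression together with Step~1.

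Third, for the total variation bound I would apply Theorem~\ref{t: MyMichelMultivariate} directly with $\mathbf{X}^\star_i$ playing the role of the underlying $E$-valued sample and $A^\star$ the fixed Borel set. This immediately yields $d_{TV}(\mathcal{L}(\Xi^\star_{A^\star}), \mathrm{PRM}(\bpis)) \le P(\mathbf{X}^\star \in A^\star)$. For the specific choice $A^\star = [u_n^\star,\infty)^2$, I would unwind the normalisation: $P(\mathbf{X}^\star \in A^\star) = P(X_1 \ge \kappa, X_2 \ge \kappa)$ where $\kappa = \lceil (u_n^\star+\log n)/\log(1/p_{00}) \rceil$, and then use the joint survival function~\eqref{d: MO_survival} on the diagonal, $P(X_1\ge\kappa, X_2\ge\kappa) = p_{00}^\kappa$, together with $p_{00}^\kappa \le p_{00}^{(u_n^\star+\log n)/\log(1/p_{00})} = \mathrm{e}^{-u_n^\star}/n$.

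There is no real obstacle here; the only thing requiring care is bookkeeping the sign conventions when rewriting quantities like $\log q_2/\log(1/p_{00}) = -\log q_2/\log p_{00}$ in Step~1, and correctly handling the ceiling in Step~3 so the final bound comes out cleanly as $\mathrm{e}^{-u_n^\star}/n$ rather than with an extra $p_{00}$ factor.
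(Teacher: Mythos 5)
Your proposal is correct and follows essentially the same route as the paper: the change of variables $p_{00}^k=\mathrm{e}^{-k^\star}/n$ for \eqref{d: MO_Geo_mass_fct_normalised}, linearity of expectation for the mean measure, a direct application of Theorem~\ref{t: MyMichelMultivariate}, and the diagonal case of \eqref{d: MO_survival} with the ceiling to get $p_{00}^{\lceil (u_n^\star+\log n)/\log(1/p_{00})\rceil}\le \mathrm{e}^{-u_n^\star}/n$. No discrepancies with the paper's argument.
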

\begin{proof}
With (\ref{d: MO_pointprob}) and (\ref{d: MO_Geo_normalisation_joint}), we obtain (\ref{d: MO_Geo_mass_fct_normalised}).
For any 
$B^\star \in \mathcal{B}([-\log n, \infty)^2)$, 
the mean measure of $\Xi^\star_{A^\star}$ applied to $B^\star$ is 
given by
\[
nP(\mathbf{X}^\star \in A^\star \cap B^\star) =\sum_{(k^\star,l^\star) \in A^\star \cap E^\star \cap B^\star} nP(X_1^\star = k^\star, X_2^\star = l^\star).
\]
By Theorem \ref{t: MyMichelMultivariate},
$
d_{TV}\left( \mathcal{L}\left(\Xi^\star_{A^\star} \right)\,, \mathrm{PRM}(\bpis)\right) 
 \le P(\mathbf{X}^\star \in A^\star),
 $
where, using \eqref{d: MO_survival}, we find
\begin{align}
&P\left( \mathbf{X}^\star \in A^\star\right) 
=
P \left( X_1 \ge \left \lceil \frac{u^\star_n + \log n}{\log(1/p_{00})} \right \rceil, 
X_2 \ge \left \lceil \frac{u^\star_n + \log n}{\log(1/p_{00})} \right \rceil \right)
= p_{00}^{\left \lceil \frac{u^\star_n + \log n}{\log(1/p_{00})} \right \rceil} \le \frac{\mathrm{e}^{-u^\star_n}}{n}\, .\label{eq:diagonal_case}
\end{align}
\end{proof}
\subsubsection{Construction of a ``continuous'' intensity function}\label{s: MO_Geo_cont_int_fct} 
Proposition \ref{t: MO_Geo_dTV_lattice} gives an error bound for the approximation of the MPPE $\Xi^\star_{A^\star}$ by a Poisson process whose
mean measure $\mathbb{E}\Xi^\star_{A^\star}$ lives on the lattice of normalised points $(k^\star, l^\star)$, i.e. on
\begin{align*}
E^\star 
&= \left(\log(1/p_{00})\mathbb{Z}_+  - \log n\right)^2 \subset [-\log n, \infty)^2.
\end{align*}
We would however prefer to approximate the law of the MPPE by that of a Poisson process with an easier-to-use and more flexible \textit{continuous} 
intensity measure $\bl^\star = \bl^\star_{A^\star}$ living on $A^\star \cap [-\log n, \infty)^2$. 

As the survival copula of the Marshall-Olkin geometric distribution is a Marshall-Olkin copula, and thereby consists of both an absolutely continuous part off the diagonal in $[-\log n, \infty)^2$ and a singular part on the diagonal (refer to Section 3.1.1 in \cite{Nelsen:2006}), the
``continuous'' intensity measure $\bl^\star$ will have to be of the form 
\begin{equation}\label{d: MO_Geo_cont_int_meas}
\bl^\star(B^\star) = \int_{A^\star \cap B^\star} \lambda^\star(s,t)\De s\De t + \int_{A^\star \cap B^\star \cap \{(s,t):\, s=t\}} \acute{\lambda}^\star(s)\De \De ts, 
\end{equation}
for any $B^\star \in \mathcal{B}([-\log n, \infty)^2)$, for ``continuous'' intensity functions $\lambda^\star$ and $\acute{\lambda}^\star$ that, 
if integrated over the entire space, will give $n$.
\begin{remark}
Note that for simplicity of language we here (and later on) somewhat abuse terminology when speaking of a ``continuous'' intensity function $\ls$ or a ``continuous'' intensity 
measure $\bls$. The bivariate intensity function $\ls$ is not continuous, but piecewise continuous, having a jump along the diagonal. 
The measure $\bls$ is continuous only in the sense that it has an intensity with respect to 
Lebesgue measure ($2$-dimensional on the off-diagonal and $1$-dimensional on the diagonal) and not with respect to a point measure. 
\end{remark}
The idea is to spread the point mass sitting on each of the off-diagonal lattice points $(k^\star, l^\star) \in E^\star$, $k^\star \neq l^\star$, uniformly
over each of their corresponding coordinate rectangles (or rather, coordinate squares)
\[
R^\star_{k^\star,l^\star} 
= \left[k^\star, k^\star + \log \left(\frac{1}{p_{00}}\right)\right) \times \left[l^\star, l^\star + \log \left(\frac{1}{p_{00}}\right)\right), \quad k^\star \neq l^\star,
\]
and to also spread the point probabilities of the diagonal points $(k^\star,k^\star)$ over the diagonal line $s=t$, where $s,t \ge - \log n$. 
We achieve this in the following three steps, illustrated in Figure \ref{f: spread_prob_1}.

\textbf{Step 1.} Consider only the off-diagonal lattice points. We of course have 
\[
P\left(\mathbf{X}^\star \in R^\star_{k^\star,l^\star}  \right)= P\left(X_1^\star = k^\star, X_2^\star = l^\star\right),
\]
which is given by (\ref{d: MO_Geo_mass_fct_normalised}), and we may express the mean $nP(\mathbf{X}^\star \in A^\star)$ as 
\begin{multline}\label{p: spread_prob_1}
\sum_{ (k^\star,l^\star) \in A^\star, k^\star \neq l^\star } n
\int \int_{\Rstar} \frac{P(X_1^\star=k^\star, X_2^\star=l^\star)}{\log^2 (1/p_{00})}\, \De s\De t + \sum_{(k^\star, k^\star) \in A^\star}n P\left(X_1^\star=k^\star, X_2^\star=k^\star\right),
\end{multline}
where $\log^2 (1/p_{00})$ is the surface area of $\Rstar$. 
\begin{figure}
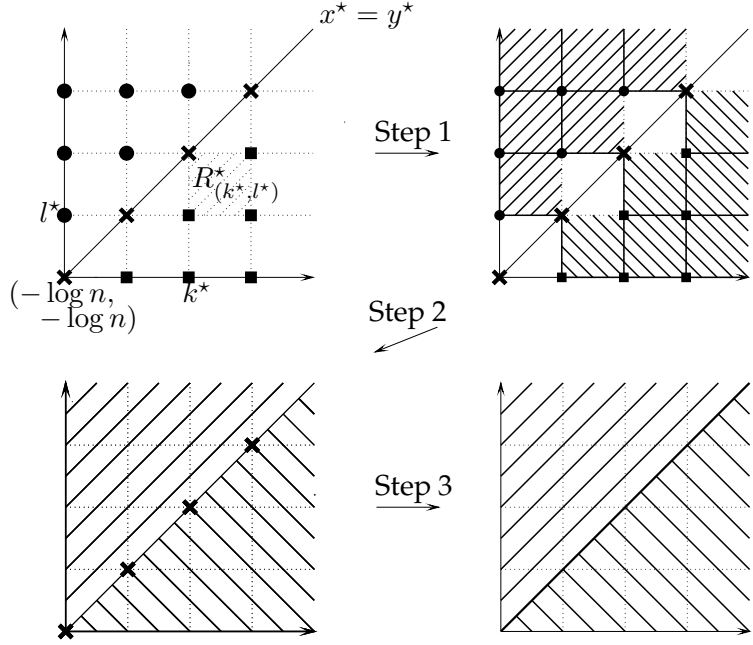

\centering
\begin{pgfpicture}{31.34mm}{21.24mm}{129.86mm}{110.63mm}
\pgfsetxvec{\pgfpoint{1.00mm}{0mm}}
\pgfsetyvec{\pgfpoint{0mm}{1.00mm}}
\color[rgb]{0,0,0}\pgfsetlinewidth{0.10mm}\pgfsetdash{}{0mm}
\pgfmoveto{\pgfxy(38.78,102.36)}\pgflineto{\pgfxy(38.78,69.31)}\pgfstroke
\pgfmoveto{\pgfxy(38.78,102.36)}\pgflineto{\pgfxy(38.48,101.16)}\pgflineto{\pgfxy(38.78,101.76)}\pgflineto{\pgfxy(39.08,101.16)}\pgflineto{\pgfxy(38.78,102.36)}\pgfclosepath\pgffill
\pgfmoveto{\pgfxy(38.78,102.36)}\pgflineto{\pgfxy(38.48,101.16)}\pgflineto{\pgfxy(38.78,101.76)}\pgflineto{\pgfxy(39.08,101.16)}\pgflineto{\pgfxy(38.78,102.36)}\pgfclosepath\pgfstroke
\pgfsetdash{{0.10mm}{0.50mm}}{0mm}\pgfmoveto{\pgfxy(47.05,102.36)}\pgflineto{\pgfxy(47.05,69.31)}\pgfstroke
\pgfmoveto{\pgfxy(63.58,102.36)}\pgflineto{\pgfxy(63.58,69.31)}\pgfstroke
\pgfmoveto{\pgfxy(38.78,85.83)}\pgflineto{\pgfxy(71.84,85.83)}\pgfstroke
\pgfmoveto{\pgfxy(38.78,94.10)}\pgflineto{\pgfxy(71.84,94.10)}\pgfstroke
\pgfsetdash{}{0mm}\pgfmoveto{\pgfxy(38.78,69.31)}\pgflineto{\pgfxy(71.84,102.36)}\pgfstroke
\pgfcircle[fill]{\pgfxy(47.05,85.83)}{0.80mm}
\pgfsetlinewidth{0.40mm}\pgfcircle[stroke]{\pgfxy(47.05,85.83)}{0.80mm}
\pgfcircle[fill]{\pgfxy(47.05,94.10)}{0.80mm}
\pgfcircle[stroke]{\pgfxy(47.05,94.10)}{0.80mm}
\pgfcircle[fill]{\pgfxy(55.31,94.10)}{0.80mm}
\pgfcircle[stroke]{\pgfxy(55.31,94.10)}{0.80mm}
\pgfcircle[fill]{\pgfxy(38.78,85.83)}{0.80mm}
\pgfcircle[stroke]{\pgfxy(38.78,85.83)}{0.80mm}
\pgfcircle[fill]{\pgfxy(38.78,77.57)}{0.80mm}
\pgfcircle[stroke]{\pgfxy(38.78,77.57)}{0.80mm}
\pgfcircle[fill]{\pgfxy(38.78,94.10)}{0.80mm}
\pgfcircle[stroke]{\pgfxy(38.78,94.10)}{0.80mm}
\pgfcircle[fill]{\pgfxy(47.05,77.57)}{0.20mm}
\pgfsetlinewidth{0.10mm}\pgfcircle[stroke]{\pgfxy(47.05,77.57)}{0.20mm}
\pgfcircle[fill]{\pgfxy(47.05,77.57)}{0.20mm}
\pgfcircle[stroke]{\pgfxy(47.05,77.57)}{0.20mm}
\pgfcircle[fill]{\pgfxy(47.05,69.31)}{0.20mm}
\pgfcircle[stroke]{\pgfxy(47.05,69.31)}{0.20mm}
\pgfcircle[fill]{\pgfxy(55.31,69.31)}{0.20mm}
\pgfcircle[stroke]{\pgfxy(55.31,69.31)}{0.20mm}
\pgfcircle[fill]{\pgfxy(63.58,69.31)}{0.20mm}
\pgfcircle[stroke]{\pgfxy(63.58,69.31)}{0.20mm}
\pgfcircle[fill]{\pgfxy(55.31,77.57)}{0.20mm}
\pgfcircle[stroke]{\pgfxy(55.31,77.57)}{0.20mm}
\pgfcircle[fill]{\pgfxy(55.31,77.57)}{0.20mm}
\pgfcircle[stroke]{\pgfxy(55.31,77.57)}{0.20mm}
\pgfmoveto{\pgfxy(55.39,77.57)}\pgflineto{\pgfxy(55.55,77.73)}\pgflineto{\pgfxy(55.47,77.81)}\pgflineto{\pgfxy(55.31,77.65)}\pgflineto{\pgfxy(55.15,77.81)}\pgflineto{\pgfxy(55.07,77.73)}\pgflineto{\pgfxy(55.23,77.57)}\pgflineto{\pgfxy(55.07,77.41)}\pgflineto{\pgfxy(55.15,77.33)}\pgflineto{\pgfxy(55.31,77.49)}\pgflineto{\pgfxy(55.47,77.33)}\pgflineto{\pgfxy(55.55,77.41)}\pgfclosepath\pgffill
\pgfmoveto{\pgfxy(55.39,77.57)}\pgflineto{\pgfxy(55.55,77.73)}\pgflineto{\pgfxy(55.47,77.81)}\pgflineto{\pgfxy(55.31,77.65)}\pgflineto{\pgfxy(55.15,77.81)}\pgflineto{\pgfxy(55.07,77.73)}\pgflineto{\pgfxy(55.23,77.57)}\pgflineto{\pgfxy(55.07,77.41)}\pgflineto{\pgfxy(55.15,77.33)}\pgflineto{\pgfxy(55.31,77.49)}\pgflineto{\pgfxy(55.47,77.33)}\pgflineto{\pgfxy(55.55,77.41)}\pgfclosepath\pgfstroke
\pgfcircle[fill]{\pgfxy(55.31,77.57)}{0.20mm}
\pgfcircle[stroke]{\pgfxy(55.31,77.57)}{0.20mm}
\pgfcircle[fill]{\pgfxy(63.58,77.57)}{0.20mm}
\pgfcircle[stroke]{\pgfxy(63.58,77.57)}{0.20mm}
\pgfcircle[fill]{\pgfxy(63.58,85.83)}{0.20mm}
\pgfcircle[stroke]{\pgfxy(63.58,85.83)}{0.20mm}
\pgfcircle[fill]{\pgfxy(47.05,69.31)}{0.20mm}
\pgfcircle[stroke]{\pgfxy(47.05,69.31)}{0.20mm}
\pgfcircle[fill]{\pgfxy(47.05,69.31)}{0.20mm}
\pgfcircle[stroke]{\pgfxy(47.05,69.31)}{0.20mm}
\pgfcircle[fill]{\pgfxy(47.05,69.31)}{0.20mm}
\pgfcircle[stroke]{\pgfxy(47.05,69.31)}{0.20mm}
\pgfcircle[fill]{\pgfxy(55.31,69.31)}{0.20mm}
\pgfcircle[stroke]{\pgfxy(55.31,69.31)}{0.20mm}
\pgfcircle[fill]{\pgfxy(63.58,69.31)}{0.20mm}
\pgfcircle[stroke]{\pgfxy(63.58,69.31)}{0.20mm}
\pgfcircle[fill]{\pgfxy(47.05,77.57)}{0.20mm}
\pgfcircle[stroke]{\pgfxy(47.05,77.57)}{0.20mm}
\pgfcircle[fill]{\pgfxy(38.78,69.31)}{0.20mm}
\pgfcircle[stroke]{\pgfxy(38.78,69.31)}{0.20mm}
\pgfcircle[fill]{\pgfxy(55.31,85.83)}{0.20mm}
\pgfcircle[stroke]{\pgfxy(55.31,85.83)}{0.20mm}
\pgfcircle[fill]{\pgfxy(63.58,94.10)}{0.20mm}
\pgfcircle[stroke]{\pgfxy(63.58,94.10)}{0.20mm}
\pgfsetlinewidth{0.30mm}\pgfmoveto{\pgfxy(47.05,110.63)}\pgflineto{\pgfxy(47.05,110.63)}\pgfstroke
\pgfsetlinewidth{0.10mm}\pgfmoveto{\pgfxy(38.78,69.31)}\pgflineto{\pgfxy(71.84,69.31)}\pgfstroke
\pgfmoveto{\pgfxy(71.84,69.31)}\pgflineto{\pgfxy(70.64,69.61)}\pgflineto{\pgfxy(71.24,69.31)}\pgflineto{\pgfxy(70.64,69.01)}\pgflineto{\pgfxy(71.84,69.31)}\pgfclosepath\pgffill
\pgfmoveto{\pgfxy(71.84,69.31)}\pgflineto{\pgfxy(70.64,69.61)}\pgflineto{\pgfxy(71.24,69.31)}\pgflineto{\pgfxy(70.64,69.01)}\pgflineto{\pgfxy(71.84,69.31)}\pgfclosepath\pgfstroke
\pgfmoveto{\pgfxy(47.37,77.57)}\pgflineto{\pgfxy(48.01,78.21)}\pgflineto{\pgfxy(47.69,78.53)}\pgflineto{\pgfxy(47.05,77.89)}\pgflineto{\pgfxy(46.41,78.53)}\pgflineto{\pgfxy(46.09,78.21)}\pgflineto{\pgfxy(46.73,77.57)}\pgflineto{\pgfxy(46.09,76.93)}\pgflineto{\pgfxy(46.41,76.61)}\pgflineto{\pgfxy(47.05,77.25)}\pgflineto{\pgfxy(47.69,76.61)}\pgflineto{\pgfxy(48.01,76.93)}\pgfclosepath\pgffill
\pgfmoveto{\pgfxy(47.37,77.57)}\pgflineto{\pgfxy(48.01,78.21)}\pgflineto{\pgfxy(47.69,78.53)}\pgflineto{\pgfxy(47.05,77.89)}\pgflineto{\pgfxy(46.41,78.53)}\pgflineto{\pgfxy(46.09,78.21)}\pgflineto{\pgfxy(46.73,77.57)}\pgflineto{\pgfxy(46.09,76.93)}\pgflineto{\pgfxy(46.41,76.61)}\pgflineto{\pgfxy(47.05,77.25)}\pgflineto{\pgfxy(47.69,76.61)}\pgflineto{\pgfxy(48.01,76.93)}\pgfclosepath\pgfstroke
\pgfmoveto{\pgfxy(55.63,85.83)}\pgflineto{\pgfxy(56.27,86.47)}\pgflineto{\pgfxy(55.95,86.79)}\pgflineto{\pgfxy(55.31,86.15)}\pgflineto{\pgfxy(54.67,86.79)}\pgflineto{\pgfxy(54.35,86.47)}\pgflineto{\pgfxy(54.99,85.83)}\pgflineto{\pgfxy(54.35,85.19)}\pgflineto{\pgfxy(54.67,84.87)}\pgflineto{\pgfxy(55.31,85.51)}\pgflineto{\pgfxy(55.95,84.87)}\pgflineto{\pgfxy(56.27,85.19)}\pgfclosepath\pgffill
\pgfmoveto{\pgfxy(55.63,85.83)}\pgflineto{\pgfxy(56.27,86.47)}\pgflineto{\pgfxy(55.95,86.79)}\pgflineto{\pgfxy(55.31,86.15)}\pgflineto{\pgfxy(54.67,86.79)}\pgflineto{\pgfxy(54.35,86.47)}\pgflineto{\pgfxy(54.99,85.83)}\pgflineto{\pgfxy(54.35,85.19)}\pgflineto{\pgfxy(54.67,84.87)}\pgflineto{\pgfxy(55.31,85.51)}\pgflineto{\pgfxy(55.95,84.87)}\pgflineto{\pgfxy(56.27,85.19)}\pgfclosepath\pgfstroke
\pgfmoveto{\pgfxy(63.90,94.10)}\pgflineto{\pgfxy(64.54,94.74)}\pgflineto{\pgfxy(64.22,95.06)}\pgflineto{\pgfxy(63.58,94.42)}\pgflineto{\pgfxy(62.94,95.06)}\pgflineto{\pgfxy(62.62,94.74)}\pgflineto{\pgfxy(63.26,94.10)}\pgflineto{\pgfxy(62.62,93.46)}\pgflineto{\pgfxy(62.94,93.14)}\pgflineto{\pgfxy(63.58,93.78)}\pgflineto{\pgfxy(64.22,93.14)}\pgflineto{\pgfxy(64.54,93.46)}\pgfclosepath\pgffill
\pgfmoveto{\pgfxy(63.90,94.10)}\pgflineto{\pgfxy(64.54,94.74)}\pgflineto{\pgfxy(64.22,95.06)}\pgflineto{\pgfxy(63.58,94.42)}\pgflineto{\pgfxy(62.94,95.06)}\pgflineto{\pgfxy(62.62,94.74)}\pgflineto{\pgfxy(63.26,94.10)}\pgflineto{\pgfxy(62.62,93.46)}\pgflineto{\pgfxy(62.94,93.14)}\pgflineto{\pgfxy(63.58,93.78)}\pgflineto{\pgfxy(64.22,93.14)}\pgflineto{\pgfxy(64.54,93.46)}\pgfclosepath\pgfstroke
\pgfcircle[fill]{\pgfxy(38.78,69.31)}{0.20mm}
\pgfcircle[stroke]{\pgfxy(38.78,69.31)}{0.20mm}
\pgfmoveto{\pgfxy(39.10,69.31)}\pgflineto{\pgfxy(39.74,69.95)}\pgflineto{\pgfxy(39.42,70.27)}\pgflineto{\pgfxy(38.78,69.63)}\pgflineto{\pgfxy(38.14,70.27)}\pgflineto{\pgfxy(37.82,69.95)}\pgflineto{\pgfxy(38.46,69.31)}\pgflineto{\pgfxy(37.82,68.67)}\pgflineto{\pgfxy(38.14,68.35)}\pgflineto{\pgfxy(38.78,68.99)}\pgflineto{\pgfxy(39.42,68.35)}\pgflineto{\pgfxy(39.74,68.67)}\pgfclosepath\pgffill
\pgfmoveto{\pgfxy(39.10,69.31)}\pgflineto{\pgfxy(39.74,69.95)}\pgflineto{\pgfxy(39.42,70.27)}\pgflineto{\pgfxy(38.78,69.63)}\pgflineto{\pgfxy(38.14,70.27)}\pgflineto{\pgfxy(37.82,69.95)}\pgflineto{\pgfxy(38.46,69.31)}\pgflineto{\pgfxy(37.82,68.67)}\pgflineto{\pgfxy(38.14,68.35)}\pgflineto{\pgfxy(38.78,68.99)}\pgflineto{\pgfxy(39.42,68.35)}\pgflineto{\pgfxy(39.74,68.67)}\pgfclosepath\pgfstroke
\pgfmoveto{\pgfxy(47.77,70.03)}\pgflineto{\pgfxy(46.33,70.03)}\pgflineto{\pgfxy(46.33,68.59)}\pgflineto{\pgfxy(47.77,68.59)}\pgfclosepath\pgffill
\pgfmoveto{\pgfxy(47.77,70.03)}\pgflineto{\pgfxy(46.33,70.03)}\pgflineto{\pgfxy(46.33,68.59)}\pgflineto{\pgfxy(47.77,68.59)}\pgfclosepath\pgfstroke
\pgfmoveto{\pgfxy(56.03,78.29)}\pgflineto{\pgfxy(54.59,78.29)}\pgflineto{\pgfxy(54.59,76.85)}\pgflineto{\pgfxy(56.03,76.85)}\pgfclosepath\pgffill
\pgfmoveto{\pgfxy(56.03,78.29)}\pgflineto{\pgfxy(54.59,78.29)}\pgflineto{\pgfxy(54.59,76.85)}\pgflineto{\pgfxy(56.03,76.85)}\pgfclosepath\pgfstroke
\pgfmoveto{\pgfxy(56.03,70.03)}\pgflineto{\pgfxy(54.59,70.03)}\pgflineto{\pgfxy(54.59,68.59)}\pgflineto{\pgfxy(56.03,68.59)}\pgfclosepath\pgffill
\pgfmoveto{\pgfxy(56.03,70.03)}\pgflineto{\pgfxy(54.59,70.03)}\pgflineto{\pgfxy(54.59,68.59)}\pgflineto{\pgfxy(56.03,68.59)}\pgfclosepath\pgfstroke
\pgfmoveto{\pgfxy(64.30,86.55)}\pgflineto{\pgfxy(62.86,86.55)}\pgflineto{\pgfxy(62.86,85.11)}\pgflineto{\pgfxy(64.30,85.11)}\pgfclosepath\pgffill
\pgfmoveto{\pgfxy(64.30,86.55)}\pgflineto{\pgfxy(62.86,86.55)}\pgflineto{\pgfxy(62.86,85.11)}\pgflineto{\pgfxy(64.30,85.11)}\pgfclosepath\pgfstroke
\pgfmoveto{\pgfxy(64.30,78.29)}\pgflineto{\pgfxy(62.86,78.29)}\pgflineto{\pgfxy(62.86,76.85)}\pgflineto{\pgfxy(64.30,76.85)}\pgfclosepath\pgffill
\pgfmoveto{\pgfxy(64.30,78.29)}\pgflineto{\pgfxy(62.86,78.29)}\pgflineto{\pgfxy(62.86,76.85)}\pgflineto{\pgfxy(64.30,76.85)}\pgfclosepath\pgfstroke
\pgfmoveto{\pgfxy(64.30,70.03)}\pgflineto{\pgfxy(62.86,70.03)}\pgflineto{\pgfxy(62.86,68.59)}\pgflineto{\pgfxy(64.30,68.59)}\pgfclosepath\pgffill
\pgfmoveto{\pgfxy(64.30,70.03)}\pgflineto{\pgfxy(62.86,70.03)}\pgflineto{\pgfxy(62.86,68.59)}\pgflineto{\pgfxy(64.30,68.59)}\pgfclosepath\pgfstroke
\pgfmoveto{\pgfxy(42.91,69.31)}\pgflineto{\pgfxy(42.91,69.31)}\pgfstroke
\pgfputat{\pgfxy(31.34,66.00)}{\pgfbox[bottom,left]{$(-\log n,$}}
\pgfputat{\pgfxy(72.67,103.19)}{\pgfbox[bottom,left]{$x^\star=y^\star$}}
\pgfputat{\pgfxy(35.48,62.69)}{\pgfbox[bottom,left]{$-\log n)$}}
\pgfmoveto{\pgfxy(75.97,89.97)}\pgflineto{\pgfxy(75.97,89.97)}\pgfstroke
\pgfmoveto{\pgfxy(76.80,85.83)}\pgfstroke
\pgfmoveto{\pgfxy(80.10,85.83)}\pgflineto{\pgfxy(88.37,85.83)}\pgfstroke
\pgfmoveto{\pgfxy(88.37,85.83)}\pgflineto{\pgfxy(87.17,86.13)}\pgflineto{\pgfxy(87.77,85.83)}\pgflineto{\pgfxy(87.17,85.53)}\pgflineto{\pgfxy(88.37,85.83)}\pgfclosepath\pgffill
\pgfmoveto{\pgfxy(88.37,85.83)}\pgflineto{\pgfxy(87.17,86.13)}\pgflineto{\pgfxy(87.77,85.83)}\pgflineto{\pgfxy(87.17,85.53)}\pgflineto{\pgfxy(88.37,85.83)}\pgfclosepath\pgfstroke
\pgfmoveto{\pgfxy(96.63,102.36)}\pgflineto{\pgfxy(96.63,69.31)}\pgfstroke
\pgfmoveto{\pgfxy(96.63,102.36)}\pgflineto{\pgfxy(96.33,101.16)}\pgflineto{\pgfxy(96.63,101.76)}\pgflineto{\pgfxy(96.93,101.16)}\pgflineto{\pgfxy(96.63,102.36)}\pgfclosepath\pgffill
\pgfmoveto{\pgfxy(96.63,102.36)}\pgflineto{\pgfxy(96.33,101.16)}\pgflineto{\pgfxy(96.63,101.76)}\pgflineto{\pgfxy(96.93,101.16)}\pgflineto{\pgfxy(96.63,102.36)}\pgfclosepath\pgfstroke
\pgfmoveto{\pgfxy(96.63,69.31)}\pgflineto{\pgfxy(129.69,69.31)}\pgfstroke
\pgfmoveto{\pgfxy(129.69,69.31)}\pgflineto{\pgfxy(128.49,69.61)}\pgflineto{\pgfxy(129.09,69.31)}\pgflineto{\pgfxy(128.49,69.01)}\pgflineto{\pgfxy(129.69,69.31)}\pgfclosepath\pgffill
\pgfmoveto{\pgfxy(129.69,69.31)}\pgflineto{\pgfxy(128.49,69.61)}\pgflineto{\pgfxy(129.09,69.31)}\pgflineto{\pgfxy(128.49,69.01)}\pgflineto{\pgfxy(129.69,69.31)}\pgfclosepath\pgfstroke
\pgfsetdash{{0.10mm}{0.50mm}}{0mm}\pgfmoveto{\pgfxy(104.90,102.36)}\pgflineto{\pgfxy(104.90,69.31)}\pgfstroke
\pgfmoveto{\pgfxy(113.16,102.36)}\pgflineto{\pgfxy(113.16,69.31)}\pgfstroke
\pgfmoveto{\pgfxy(121.43,102.36)}\pgflineto{\pgfxy(121.43,69.31)}\pgfstroke
\pgfmoveto{\pgfxy(96.63,77.57)}\pgflineto{\pgfxy(129.69,77.57)}\pgfstroke
\pgfmoveto{\pgfxy(96.63,85.83)}\pgflineto{\pgfxy(129.69,85.83)}\pgfstroke
\pgfmoveto{\pgfxy(96.63,94.10)}\pgflineto{\pgfxy(129.69,94.10)}\pgfstroke
\pgfsetdash{}{0mm}\pgfmoveto{\pgfxy(96.63,69.31)}\pgflineto{\pgfxy(129.69,102.36)}\pgfstroke
\pgfcircle[fill]{\pgfxy(96.63,69.31)}{0.20mm}
\pgfcircle[stroke]{\pgfxy(96.63,69.31)}{0.20mm}
\pgfmoveto{\pgfxy(96.95,69.31)}\pgflineto{\pgfxy(97.59,69.95)}\pgflineto{\pgfxy(97.27,70.27)}\pgflineto{\pgfxy(96.63,69.63)}\pgflineto{\pgfxy(95.99,70.27)}\pgflineto{\pgfxy(95.67,69.95)}\pgflineto{\pgfxy(96.31,69.31)}\pgflineto{\pgfxy(95.67,68.67)}\pgflineto{\pgfxy(95.99,68.35)}\pgflineto{\pgfxy(96.63,68.99)}\pgflineto{\pgfxy(97.27,68.35)}\pgflineto{\pgfxy(97.59,68.67)}\pgfclosepath\pgffill
\pgfmoveto{\pgfxy(96.95,69.31)}\pgflineto{\pgfxy(97.59,69.95)}\pgflineto{\pgfxy(97.27,70.27)}\pgflineto{\pgfxy(96.63,69.63)}\pgflineto{\pgfxy(95.99,70.27)}\pgflineto{\pgfxy(95.67,69.95)}\pgflineto{\pgfxy(96.31,69.31)}\pgflineto{\pgfxy(95.67,68.67)}\pgflineto{\pgfxy(95.99,68.35)}\pgflineto{\pgfxy(96.63,68.99)}\pgflineto{\pgfxy(97.27,68.35)}\pgflineto{\pgfxy(97.59,68.67)}\pgfclosepath\pgfstroke
\pgfmoveto{\pgfxy(105.22,77.57)}\pgflineto{\pgfxy(105.86,78.21)}\pgflineto{\pgfxy(105.54,78.53)}\pgflineto{\pgfxy(104.90,77.89)}\pgflineto{\pgfxy(104.26,78.53)}\pgflineto{\pgfxy(103.94,78.21)}\pgflineto{\pgfxy(104.58,77.57)}\pgflineto{\pgfxy(103.94,76.93)}\pgflineto{\pgfxy(104.26,76.61)}\pgflineto{\pgfxy(104.90,77.25)}\pgflineto{\pgfxy(105.54,76.61)}\pgflineto{\pgfxy(105.86,76.93)}\pgfclosepath\pgffill
\pgfmoveto{\pgfxy(105.22,77.57)}\pgflineto{\pgfxy(105.86,78.21)}\pgflineto{\pgfxy(105.54,78.53)}\pgflineto{\pgfxy(104.90,77.89)}\pgflineto{\pgfxy(104.26,78.53)}\pgflineto{\pgfxy(103.94,78.21)}\pgflineto{\pgfxy(104.58,77.57)}\pgflineto{\pgfxy(103.94,76.93)}\pgflineto{\pgfxy(104.26,76.61)}\pgflineto{\pgfxy(104.90,77.25)}\pgflineto{\pgfxy(105.54,76.61)}\pgflineto{\pgfxy(105.86,76.93)}\pgfclosepath\pgfstroke
\pgfmoveto{\pgfxy(113.48,85.83)}\pgflineto{\pgfxy(114.12,86.47)}\pgflineto{\pgfxy(113.80,86.79)}\pgflineto{\pgfxy(113.16,86.15)}\pgflineto{\pgfxy(112.52,86.79)}\pgflineto{\pgfxy(112.20,86.47)}\pgflineto{\pgfxy(112.84,85.83)}\pgflineto{\pgfxy(112.20,85.19)}\pgflineto{\pgfxy(112.52,84.87)}\pgflineto{\pgfxy(113.16,85.51)}\pgflineto{\pgfxy(113.80,84.87)}\pgflineto{\pgfxy(114.12,85.19)}\pgfclosepath\pgffill
\pgfmoveto{\pgfxy(113.48,85.83)}\pgflineto{\pgfxy(114.12,86.47)}\pgflineto{\pgfxy(113.80,86.79)}\pgflineto{\pgfxy(113.16,86.15)}\pgflineto{\pgfxy(112.52,86.79)}\pgflineto{\pgfxy(112.20,86.47)}\pgflineto{\pgfxy(112.84,85.83)}\pgflineto{\pgfxy(112.20,85.19)}\pgflineto{\pgfxy(112.52,84.87)}\pgflineto{\pgfxy(113.16,85.51)}\pgflineto{\pgfxy(113.80,84.87)}\pgflineto{\pgfxy(114.12,85.19)}\pgfclosepath\pgfstroke
\pgfmoveto{\pgfxy(121.75,94.10)}\pgflineto{\pgfxy(122.39,94.74)}\pgflineto{\pgfxy(122.07,95.06)}\pgflineto{\pgfxy(121.43,94.42)}\pgflineto{\pgfxy(120.79,95.06)}\pgflineto{\pgfxy(120.47,94.74)}\pgflineto{\pgfxy(121.11,94.10)}\pgflineto{\pgfxy(120.47,93.46)}\pgflineto{\pgfxy(120.79,93.14)}\pgflineto{\pgfxy(121.43,93.78)}\pgflineto{\pgfxy(122.07,93.14)}\pgflineto{\pgfxy(122.39,93.46)}\pgfclosepath\pgffill
\pgfmoveto{\pgfxy(121.75,94.10)}\pgflineto{\pgfxy(122.39,94.74)}\pgflineto{\pgfxy(122.07,95.06)}\pgflineto{\pgfxy(121.43,94.42)}\pgflineto{\pgfxy(120.79,95.06)}\pgflineto{\pgfxy(120.47,94.74)}\pgflineto{\pgfxy(121.11,94.10)}\pgflineto{\pgfxy(120.47,93.46)}\pgflineto{\pgfxy(120.79,93.14)}\pgflineto{\pgfxy(121.43,93.78)}\pgflineto{\pgfxy(122.07,93.14)}\pgflineto{\pgfxy(122.39,93.46)}\pgfclosepath\pgfstroke
\pgfsetlinewidth{0.20mm}\pgfmoveto{\pgfxy(96.63,94.10)}\pgflineto{\pgfxy(96.63,102.36)}\pgfstroke
\pgfmoveto{\pgfxy(96.63,94.10)}\pgflineto{\pgfxy(104.90,94.10)}\pgfstroke
\pgfmoveto{\pgfxy(96.63,94.10)}\pgflineto{\pgfxy(104.90,102.36)}\pgfstroke
\pgfmoveto{\pgfxy(96.63,98.23)}\pgflineto{\pgfxy(100.77,102.36)}\pgfstroke
\pgfmoveto{\pgfxy(100.77,94.10)}\pgflineto{\pgfxy(104.90,98.23)}\pgfstroke
\pgfmoveto{\pgfxy(96.63,100.30)}\pgflineto{\pgfxy(98.70,102.36)}\pgfstroke
\pgfmoveto{\pgfxy(96.63,96.16)}\pgflineto{\pgfxy(102.83,102.36)}\pgfstroke
\pgfmoveto{\pgfxy(98.70,94.10)}\pgflineto{\pgfxy(104.90,100.30)}\pgfstroke
\pgfmoveto{\pgfxy(102.83,94.10)}\pgflineto{\pgfxy(104.90,96.16)}\pgfstroke
\pgfmoveto{\pgfxy(96.63,85.83)}\pgflineto{\pgfxy(96.63,94.10)}\pgfstroke
\pgfmoveto{\pgfxy(96.63,85.83)}\pgflineto{\pgfxy(104.90,85.83)}\pgfstroke
\pgfmoveto{\pgfxy(96.63,85.83)}\pgflineto{\pgfxy(104.90,94.10)}\pgfstroke
\pgfmoveto{\pgfxy(96.63,87.90)}\pgflineto{\pgfxy(102.83,94.10)}\pgfstroke
\pgfmoveto{\pgfxy(100.77,94.10)}\pgflineto{\pgfxy(96.63,89.97)}\pgfstroke
\pgfmoveto{\pgfxy(98.70,94.10)}\pgflineto{\pgfxy(96.63,92.03)}\pgfstroke
\pgfmoveto{\pgfxy(98.70,85.83)}\pgflineto{\pgfxy(104.90,92.03)}\pgfstroke
\pgfmoveto{\pgfxy(100.77,85.83)}\pgflineto{\pgfxy(104.90,89.97)}\pgfstroke
\pgfmoveto{\pgfxy(102.83,85.83)}\pgflineto{\pgfxy(104.90,87.90)}\pgfstroke
\pgfmoveto{\pgfxy(96.63,77.57)}\pgflineto{\pgfxy(96.63,85.83)}\pgfstroke
\pgfmoveto{\pgfxy(96.63,77.57)}\pgflineto{\pgfxy(104.90,77.57)}\pgfstroke
\pgfmoveto{\pgfxy(98.70,85.83)}\pgflineto{\pgfxy(96.63,83.77)}\pgfstroke
\pgfmoveto{\pgfxy(100.77,85.83)}\pgflineto{\pgfxy(96.63,81.70)}\pgfstroke
\pgfmoveto{\pgfxy(102.83,85.83)}\pgflineto{\pgfxy(96.63,79.64)}\pgfstroke
\pgfmoveto{\pgfxy(96.63,77.57)}\pgflineto{\pgfxy(104.90,85.83)}\pgfstroke
\pgfmoveto{\pgfxy(98.70,77.57)}\pgflineto{\pgfxy(104.90,83.77)}\pgfstroke
\pgfmoveto{\pgfxy(100.77,77.57)}\pgflineto{\pgfxy(104.90,81.70)}\pgfstroke
\pgfmoveto{\pgfxy(102.83,77.57)}\pgflineto{\pgfxy(104.90,79.64)}\pgfstroke
\pgfmoveto{\pgfxy(104.90,94.10)}\pgflineto{\pgfxy(104.90,85.83)}\pgfstroke
\pgfmoveto{\pgfxy(104.90,85.83)}\pgflineto{\pgfxy(113.16,85.83)}\pgfstroke
\pgfmoveto{\pgfxy(104.90,102.36)}\pgflineto{\pgfxy(104.90,94.10)}\pgfstroke
\pgfmoveto{\pgfxy(104.90,94.10)}\pgflineto{\pgfxy(113.16,94.10)}\pgfstroke
\pgfmoveto{\pgfxy(113.16,102.36)}\pgflineto{\pgfxy(113.16,94.10)}\pgfstroke
\pgfmoveto{\pgfxy(113.16,94.10)}\pgflineto{\pgfxy(121.43,94.10)}\pgfstroke
\pgfmoveto{\pgfxy(104.90,100.30)}\pgflineto{\pgfxy(106.96,102.36)}\pgfstroke
\pgfmoveto{\pgfxy(104.90,98.23)}\pgflineto{\pgfxy(109.03,102.36)}\pgfstroke
\pgfmoveto{\pgfxy(104.90,96.16)}\pgflineto{\pgfxy(111.10,102.36)}\pgfstroke
\pgfmoveto{\pgfxy(104.90,94.10)}\pgflineto{\pgfxy(113.16,102.36)}\pgfstroke
\pgfmoveto{\pgfxy(104.90,92.03)}\pgflineto{\pgfxy(113.16,100.30)}\pgfstroke
\pgfmoveto{\pgfxy(104.90,89.97)}\pgflineto{\pgfxy(113.16,98.23)}\pgfstroke
\pgfmoveto{\pgfxy(104.90,87.90)}\pgflineto{\pgfxy(113.16,96.16)}\pgfstroke
\pgfmoveto{\pgfxy(104.90,85.83)}\pgflineto{\pgfxy(113.16,94.10)}\pgfstroke
\pgfmoveto{\pgfxy(106.96,85.83)}\pgflineto{\pgfxy(113.16,92.03)}\pgfstroke
\pgfmoveto{\pgfxy(109.03,85.83)}\pgflineto{\pgfxy(113.16,89.97)}\pgfstroke
\pgfmoveto{\pgfxy(111.10,85.83)}\pgflineto{\pgfxy(113.16,87.90)}\pgfstroke
\pgfmoveto{\pgfxy(113.16,100.30)}\pgflineto{\pgfxy(115.23,102.36)}\pgfstroke
\pgfmoveto{\pgfxy(113.16,98.23)}\pgflineto{\pgfxy(117.29,102.36)}\pgfstroke
\pgfmoveto{\pgfxy(113.16,96.16)}\pgflineto{\pgfxy(119.36,102.36)}\pgfstroke
\pgfmoveto{\pgfxy(113.16,94.10)}\pgflineto{\pgfxy(121.43,102.36)}\pgfstroke
\pgfmoveto{\pgfxy(115.23,94.10)}\pgflineto{\pgfxy(121.43,100.30)}\pgfstroke
\pgfmoveto{\pgfxy(117.29,94.10)}\pgflineto{\pgfxy(121.43,98.23)}\pgfstroke
\pgfmoveto{\pgfxy(119.36,94.10)}\pgflineto{\pgfxy(121.43,96.16)}\pgfstroke
\pgfcircle[fill]{\pgfxy(96.63,77.57)}{0.60mm}
\pgfcircle[stroke]{\pgfxy(96.63,77.57)}{0.60mm}
\pgfcircle[fill]{\pgfxy(96.63,85.83)}{0.60mm}
\pgfcircle[stroke]{\pgfxy(96.63,85.83)}{0.60mm}
\pgfcircle[fill]{\pgfxy(96.63,94.10)}{0.60mm}
\pgfcircle[stroke]{\pgfxy(96.63,94.10)}{0.60mm}
\pgfcircle[fill]{\pgfxy(104.90,85.83)}{0.60mm}
\pgfcircle[stroke]{\pgfxy(104.90,85.83)}{0.60mm}
\pgfcircle[fill]{\pgfxy(104.90,94.10)}{0.60mm}
\pgfcircle[stroke]{\pgfxy(104.90,94.10)}{0.60mm}
\pgfcircle[fill]{\pgfxy(113.16,94.10)}{0.60mm}
\pgfcircle[stroke]{\pgfxy(113.16,94.10)}{0.60mm}
\pgfmoveto{\pgfxy(104.90,77.57)}\pgflineto{\pgfxy(104.90,69.31)}\pgfstroke
\pgfmoveto{\pgfxy(104.90,69.31)}\pgflineto{\pgfxy(113.16,69.31)}\pgfstroke
\pgfmoveto{\pgfxy(113.16,69.31)}\pgflineto{\pgfxy(121.43,69.31)}\pgfstroke
\pgfmoveto{\pgfxy(121.43,69.31)}\pgflineto{\pgfxy(128.86,69.31)}\pgfstroke
\pgfmoveto{\pgfxy(113.16,77.57)}\pgflineto{\pgfxy(121.43,77.57)}\pgfstroke
\pgfmoveto{\pgfxy(113.16,77.57)}\pgflineto{\pgfxy(113.16,69.31)}\pgfstroke
\pgfmoveto{\pgfxy(113.16,85.83)}\pgflineto{\pgfxy(113.16,77.57)}\pgfstroke
\pgfmoveto{\pgfxy(121.43,85.83)}\pgflineto{\pgfxy(121.43,77.57)}\pgfstroke
\pgfmoveto{\pgfxy(121.43,77.57)}\pgflineto{\pgfxy(121.43,69.31)}\pgfstroke
\pgfmoveto{\pgfxy(121.43,77.57)}\pgflineto{\pgfxy(129.69,77.57)}\pgfstroke
\pgfmoveto{\pgfxy(121.43,94.10)}\pgflineto{\pgfxy(121.43,85.83)}\pgfstroke
\pgfmoveto{\pgfxy(121.43,85.83)}\pgflineto{\pgfxy(129.69,85.83)}\pgfstroke
\pgfmoveto{\pgfxy(127.21,69.31)}\pgflineto{\pgfxy(127.21,69.31)}\pgfstroke
\pgfmoveto{\pgfxy(105.44,69.85)}\pgflineto{\pgfxy(104.36,69.85)}\pgflineto{\pgfxy(104.36,68.77)}\pgflineto{\pgfxy(105.44,68.77)}\pgfclosepath\pgffill
\pgfmoveto{\pgfxy(105.44,69.85)}\pgflineto{\pgfxy(104.36,69.85)}\pgflineto{\pgfxy(104.36,68.77)}\pgflineto{\pgfxy(105.44,68.77)}\pgfclosepath\pgfstroke
\pgfmoveto{\pgfxy(113.70,69.85)}\pgflineto{\pgfxy(112.62,69.85)}\pgflineto{\pgfxy(112.62,68.77)}\pgflineto{\pgfxy(113.70,68.77)}\pgfclosepath\pgffill
\pgfmoveto{\pgfxy(113.70,69.85)}\pgflineto{\pgfxy(112.62,69.85)}\pgflineto{\pgfxy(112.62,68.77)}\pgflineto{\pgfxy(113.70,68.77)}\pgfclosepath\pgfstroke
\pgfmoveto{\pgfxy(113.70,78.11)}\pgflineto{\pgfxy(112.62,78.11)}\pgflineto{\pgfxy(112.62,77.03)}\pgflineto{\pgfxy(113.70,77.03)}\pgfclosepath\pgffill
\pgfmoveto{\pgfxy(113.70,78.11)}\pgflineto{\pgfxy(112.62,78.11)}\pgflineto{\pgfxy(112.62,77.03)}\pgflineto{\pgfxy(113.70,77.03)}\pgfclosepath\pgfstroke
\pgfmoveto{\pgfxy(121.97,78.11)}\pgflineto{\pgfxy(120.89,78.11)}\pgflineto{\pgfxy(120.89,77.03)}\pgflineto{\pgfxy(121.97,77.03)}\pgfclosepath\pgffill
\pgfmoveto{\pgfxy(121.97,78.11)}\pgflineto{\pgfxy(120.89,78.11)}\pgflineto{\pgfxy(120.89,77.03)}\pgflineto{\pgfxy(121.97,77.03)}\pgfclosepath\pgfstroke
\pgfmoveto{\pgfxy(121.97,69.85)}\pgflineto{\pgfxy(120.89,69.85)}\pgflineto{\pgfxy(120.89,68.77)}\pgflineto{\pgfxy(121.97,68.77)}\pgfclosepath\pgffill
\pgfmoveto{\pgfxy(121.97,69.85)}\pgflineto{\pgfxy(120.89,69.85)}\pgflineto{\pgfxy(120.89,68.77)}\pgflineto{\pgfxy(121.97,68.77)}\pgfclosepath\pgfstroke
\pgfmoveto{\pgfxy(121.97,86.37)}\pgflineto{\pgfxy(120.89,86.37)}\pgflineto{\pgfxy(120.89,85.29)}\pgflineto{\pgfxy(121.97,85.29)}\pgfclosepath\pgffill
\pgfmoveto{\pgfxy(121.97,86.37)}\pgflineto{\pgfxy(120.89,86.37)}\pgflineto{\pgfxy(120.89,85.29)}\pgflineto{\pgfxy(121.97,85.29)}\pgfclosepath\pgfstroke
\pgfsetdash{{0.10mm}{0.50mm}}{0mm}\pgfsetlinewidth{0.10mm}\pgfmoveto{\pgfxy(55.31,77.57)}\pgflineto{\pgfxy(63.58,85.83)}\pgfstroke
\pgfmoveto{\pgfxy(55.31,81.70)}\pgflineto{\pgfxy(59.44,85.83)}\pgfstroke
\pgfmoveto{\pgfxy(59.44,77.57)}\pgflineto{\pgfxy(63.58,81.70)}\pgfstroke
\pgfmoveto{\pgfxy(55.31,79.64)}\pgflineto{\pgfxy(61.51,85.83)}\pgfstroke
\pgfmoveto{\pgfxy(55.31,83.77)}\pgflineto{\pgfxy(57.38,85.83)}\pgfstroke
\pgfmoveto{\pgfxy(57.38,77.57)}\pgflineto{\pgfxy(63.58,83.77)}\pgfstroke
\pgfmoveto{\pgfxy(61.51,77.57)}\pgflineto{\pgfxy(63.58,79.64)}\pgfstroke
\pgfputat{\pgfxy(55.72,81.29)}{\pgfbox[bottom,left]{$R^\star_{(k^\star,l^\star)}$}}
\pgfputat{\pgfxy(54.48,66.00)}{\pgfbox[bottom,left]{$k^\star$}}
\pgfputat{\pgfxy(35.48,76.74)}{\pgfbox[bottom,left]{$l^\star$}}
\pgfsetlinewidth{0.20mm}\pgfmoveto{\pgfxy(55.31,97.82)}\pgflineto{\pgfxy(55.31,97.82)}\pgfstroke
\pgfmoveto{\pgfxy(56.55,85.01)}\pgflineto{\pgfxy(56.55,85.01)}\pgfstroke
\pgfsetlinewidth{0.10mm}\pgfmoveto{\pgfxy(55.31,102.36)}\pgflineto{\pgfxy(55.31,94.10)}\pgfstroke
\pgfmoveto{\pgfxy(55.31,94.10)}\pgflineto{\pgfxy(55.31,85.83)}\pgfstroke
\pgfmoveto{\pgfxy(55.31,77.57)}\pgflineto{\pgfxy(55.31,69.31)}\pgfstroke
\pgfmoveto{\pgfxy(38.78,77.57)}\pgflineto{\pgfxy(55.31,77.57)}\pgfstroke
\pgfmoveto{\pgfxy(63.58,77.57)}\pgflineto{\pgfxy(71.84,77.57)}\pgfstroke
\pgfmoveto{\pgfxy(55.31,77.57)}\pgflineto{\pgfxy(63.58,77.57)}\pgfstroke
\pgfsetlinewidth{0.20mm}\pgfmoveto{\pgfxy(55.31,82.53)}\pgflineto{\pgfxy(55.31,82.53)}\pgfstroke
\pgfsetlinewidth{0.10mm}\pgfmoveto{\pgfxy(55.31,85.83)}\pgflineto{\pgfxy(55.31,77.57)}\pgfstroke
\pgfsetdash{}{0mm}\pgfsetlinewidth{0.20mm}\pgfmoveto{\pgfxy(104.90,77.57)}\pgflineto{\pgfxy(113.16,69.31)}\pgfstroke
\pgfmoveto{\pgfxy(113.16,77.57)}\pgflineto{\pgfxy(121.43,69.31)}\pgfstroke
\pgfmoveto{\pgfxy(113.16,85.83)}\pgflineto{\pgfxy(129.69,69.31)}\pgfstroke
\pgfmoveto{\pgfxy(121.43,85.83)}\pgflineto{\pgfxy(129.69,77.57)}\pgfstroke
\pgfmoveto{\pgfxy(121.43,94.10)}\pgflineto{\pgfxy(121.43,94.10)}\pgfstroke
\pgfmoveto{\pgfxy(121.43,94.10)}\pgflineto{\pgfxy(129.69,85.83)}\pgfstroke
\pgfmoveto{\pgfxy(109.03,77.57)}\pgflineto{\pgfxy(113.16,73.44)}\pgfstroke
\pgfmoveto{\pgfxy(104.90,73.44)}\pgflineto{\pgfxy(109.03,69.31)}\pgfstroke
\pgfmoveto{\pgfxy(113.16,73.44)}\pgflineto{\pgfxy(117.29,69.31)}\pgfstroke
\pgfmoveto{\pgfxy(113.16,81.70)}\pgflineto{\pgfxy(125.56,69.31)}\pgfstroke
\pgfmoveto{\pgfxy(117.29,85.83)}\pgflineto{\pgfxy(129.69,73.44)}\pgfstroke
\pgfmoveto{\pgfxy(121.43,89.97)}\pgflineto{\pgfxy(129.69,81.70)}\pgfstroke
\pgfmoveto{\pgfxy(125.56,94.10)}\pgflineto{\pgfxy(129.69,89.97)}\pgfstroke
\pgfmoveto{\pgfxy(127.62,94.10)}\pgflineto{\pgfxy(129.69,92.03)}\pgfstroke
\pgfmoveto{\pgfxy(123.49,94.10)}\pgflineto{\pgfxy(129.69,87.90)}\pgfstroke
\pgfmoveto{\pgfxy(121.43,91.62)}\pgflineto{\pgfxy(129.69,83.77)}\pgfstroke
\pgfmoveto{\pgfxy(121.43,87.90)}\pgflineto{\pgfxy(129.69,79.64)}\pgfstroke
\pgfmoveto{\pgfxy(119.36,85.83)}\pgflineto{\pgfxy(127.62,77.57)}\pgfstroke
\pgfmoveto{\pgfxy(115.23,85.83)}\pgflineto{\pgfxy(129.69,71.37)}\pgfstroke
\pgfmoveto{\pgfxy(127.62,77.57)}\pgflineto{\pgfxy(129.69,75.50)}\pgfstroke
\pgfmoveto{\pgfxy(113.16,83.77)}\pgflineto{\pgfxy(127.62,69.31)}\pgfstroke
\pgfmoveto{\pgfxy(113.16,79.64)}\pgflineto{\pgfxy(123.49,69.31)}\pgfstroke
\pgfmoveto{\pgfxy(113.16,75.50)}\pgflineto{\pgfxy(119.36,69.31)}\pgfstroke
\pgfmoveto{\pgfxy(106.96,77.57)}\pgflineto{\pgfxy(115.23,69.31)}\pgfstroke
\pgfmoveto{\pgfxy(104.90,75.50)}\pgflineto{\pgfxy(111.10,69.31)}\pgfstroke
\pgfmoveto{\pgfxy(111.10,77.57)}\pgflineto{\pgfxy(113.16,75.50)}\pgfstroke
\pgfmoveto{\pgfxy(104.90,71.37)}\pgflineto{\pgfxy(106.96,69.31)}\pgfstroke
\pgfsetlinewidth{0.10mm}\pgfmoveto{\pgfxy(39.77,52.78)}\pgflineto{\pgfxy(39.77,52.78)}\pgfstroke
\pgfmoveto{\pgfxy(80.27,38.73)}\pgflineto{\pgfxy(88.53,38.73)}\pgfstroke
\pgfmoveto{\pgfxy(88.53,38.73)}\pgflineto{\pgfxy(87.33,39.03)}\pgflineto{\pgfxy(87.93,38.73)}\pgflineto{\pgfxy(87.33,38.43)}\pgflineto{\pgfxy(88.53,38.73)}\pgfclosepath\pgffill
\pgfmoveto{\pgfxy(88.53,38.73)}\pgflineto{\pgfxy(87.33,39.03)}\pgflineto{\pgfxy(87.93,38.73)}\pgflineto{\pgfxy(87.33,38.43)}\pgflineto{\pgfxy(88.53,38.73)}\pgfclosepath\pgfstroke
\pgfsetlinewidth{0.20mm}\pgfmoveto{\pgfxy(38.95,55.26)}\pgflineto{\pgfxy(38.95,22.20)}\pgfstroke
\pgfmoveto{\pgfxy(38.95,55.26)}\pgflineto{\pgfxy(38.65,54.06)}\pgflineto{\pgfxy(38.95,54.66)}\pgflineto{\pgfxy(39.25,54.06)}\pgflineto{\pgfxy(38.95,55.26)}\pgfclosepath\pgffill
\pgfmoveto{\pgfxy(38.95,55.26)}\pgflineto{\pgfxy(38.65,54.06)}\pgflineto{\pgfxy(38.95,54.66)}\pgflineto{\pgfxy(39.25,54.06)}\pgflineto{\pgfxy(38.95,55.26)}\pgfclosepath\pgfstroke
\pgfmoveto{\pgfxy(38.95,22.20)}\pgflineto{\pgfxy(72.00,22.20)}\pgfstroke
\pgfmoveto{\pgfxy(72.00,22.20)}\pgflineto{\pgfxy(70.80,22.50)}\pgflineto{\pgfxy(71.40,22.20)}\pgflineto{\pgfxy(70.80,21.90)}\pgflineto{\pgfxy(72.00,22.20)}\pgfclosepath\pgffill
\pgfmoveto{\pgfxy(72.00,22.20)}\pgflineto{\pgfxy(70.80,22.50)}\pgflineto{\pgfxy(71.40,22.20)}\pgflineto{\pgfxy(70.80,21.90)}\pgflineto{\pgfxy(72.00,22.20)}\pgfclosepath\pgfstroke
\pgfsetdash{{0.10mm}{0.50mm}}{0mm}\pgfsetlinewidth{0.10mm}\pgfmoveto{\pgfxy(47.21,55.26)}\pgflineto{\pgfxy(47.21,22.20)}\pgfstroke
\pgfmoveto{\pgfxy(55.48,55.26)}\pgflineto{\pgfxy(55.48,22.20)}\pgfstroke
\pgfmoveto{\pgfxy(63.74,55.26)}\pgflineto{\pgfxy(63.74,22.20)}\pgfstroke
\pgfmoveto{\pgfxy(38.95,30.46)}\pgflineto{\pgfxy(72.00,30.46)}\pgfstroke
\pgfmoveto{\pgfxy(38.95,38.73)}\pgflineto{\pgfxy(72.00,38.73)}\pgfstroke
\pgfmoveto{\pgfxy(38.95,46.99)}\pgflineto{\pgfxy(72.00,46.99)}\pgfstroke
\pgfsetdash{}{0mm}\pgfmoveto{\pgfxy(38.95,22.20)}\pgflineto{\pgfxy(72.00,55.26)}\pgfstroke
\pgfmoveto{\pgfxy(39.27,22.20)}\pgflineto{\pgfxy(39.91,22.84)}\pgflineto{\pgfxy(39.59,23.16)}\pgflineto{\pgfxy(38.95,22.52)}\pgflineto{\pgfxy(38.31,23.16)}\pgflineto{\pgfxy(37.99,22.84)}\pgflineto{\pgfxy(38.63,22.20)}\pgflineto{\pgfxy(37.99,21.56)}\pgflineto{\pgfxy(38.31,21.24)}\pgflineto{\pgfxy(38.95,21.88)}\pgflineto{\pgfxy(39.59,21.24)}\pgflineto{\pgfxy(39.91,21.56)}\pgfclosepath\pgffill
\pgfmoveto{\pgfxy(39.27,22.20)}\pgflineto{\pgfxy(39.91,22.84)}\pgflineto{\pgfxy(39.59,23.16)}\pgflineto{\pgfxy(38.95,22.52)}\pgflineto{\pgfxy(38.31,23.16)}\pgflineto{\pgfxy(37.99,22.84)}\pgflineto{\pgfxy(38.63,22.20)}\pgflineto{\pgfxy(37.99,21.56)}\pgflineto{\pgfxy(38.31,21.24)}\pgflineto{\pgfxy(38.95,21.88)}\pgflineto{\pgfxy(39.59,21.24)}\pgflineto{\pgfxy(39.91,21.56)}\pgfclosepath\pgfstroke
\pgfmoveto{\pgfxy(47.53,30.46)}\pgflineto{\pgfxy(48.17,31.10)}\pgflineto{\pgfxy(47.85,31.42)}\pgflineto{\pgfxy(47.21,30.78)}\pgflineto{\pgfxy(46.57,31.42)}\pgflineto{\pgfxy(46.25,31.10)}\pgflineto{\pgfxy(46.89,30.46)}\pgflineto{\pgfxy(46.25,29.82)}\pgflineto{\pgfxy(46.57,29.50)}\pgflineto{\pgfxy(47.21,30.14)}\pgflineto{\pgfxy(47.85,29.50)}\pgflineto{\pgfxy(48.17,29.82)}\pgfclosepath\pgffill
\pgfmoveto{\pgfxy(47.53,30.46)}\pgflineto{\pgfxy(48.17,31.10)}\pgflineto{\pgfxy(47.85,31.42)}\pgflineto{\pgfxy(47.21,30.78)}\pgflineto{\pgfxy(46.57,31.42)}\pgflineto{\pgfxy(46.25,31.10)}\pgflineto{\pgfxy(46.89,30.46)}\pgflineto{\pgfxy(46.25,29.82)}\pgflineto{\pgfxy(46.57,29.50)}\pgflineto{\pgfxy(47.21,30.14)}\pgflineto{\pgfxy(47.85,29.50)}\pgflineto{\pgfxy(48.17,29.82)}\pgfclosepath\pgfstroke
\pgfmoveto{\pgfxy(55.80,38.73)}\pgflineto{\pgfxy(56.44,39.37)}\pgflineto{\pgfxy(56.12,39.69)}\pgflineto{\pgfxy(55.48,39.05)}\pgflineto{\pgfxy(54.84,39.69)}\pgflineto{\pgfxy(54.52,39.37)}\pgflineto{\pgfxy(55.16,38.73)}\pgflineto{\pgfxy(54.52,38.09)}\pgflineto{\pgfxy(54.84,37.77)}\pgflineto{\pgfxy(55.48,38.41)}\pgflineto{\pgfxy(56.12,37.77)}\pgflineto{\pgfxy(56.44,38.09)}\pgfclosepath\pgffill
\pgfmoveto{\pgfxy(55.80,38.73)}\pgflineto{\pgfxy(56.44,39.37)}\pgflineto{\pgfxy(56.12,39.69)}\pgflineto{\pgfxy(55.48,39.05)}\pgflineto{\pgfxy(54.84,39.69)}\pgflineto{\pgfxy(54.52,39.37)}\pgflineto{\pgfxy(55.16,38.73)}\pgflineto{\pgfxy(54.52,38.09)}\pgflineto{\pgfxy(54.84,37.77)}\pgflineto{\pgfxy(55.48,38.41)}\pgflineto{\pgfxy(56.12,37.77)}\pgflineto{\pgfxy(56.44,38.09)}\pgfclosepath\pgfstroke
\pgfmoveto{\pgfxy(64.06,46.99)}\pgflineto{\pgfxy(64.70,47.63)}\pgflineto{\pgfxy(64.38,47.95)}\pgflineto{\pgfxy(63.74,47.31)}\pgflineto{\pgfxy(63.10,47.95)}\pgflineto{\pgfxy(62.78,47.63)}\pgflineto{\pgfxy(63.42,46.99)}\pgflineto{\pgfxy(62.78,46.35)}\pgflineto{\pgfxy(63.10,46.03)}\pgflineto{\pgfxy(63.74,46.67)}\pgflineto{\pgfxy(64.38,46.03)}\pgflineto{\pgfxy(64.70,46.35)}\pgfclosepath\pgffill
\pgfmoveto{\pgfxy(64.06,46.99)}\pgflineto{\pgfxy(64.70,47.63)}\pgflineto{\pgfxy(64.38,47.95)}\pgflineto{\pgfxy(63.74,47.31)}\pgflineto{\pgfxy(63.10,47.95)}\pgflineto{\pgfxy(62.78,47.63)}\pgflineto{\pgfxy(63.42,46.99)}\pgflineto{\pgfxy(62.78,46.35)}\pgflineto{\pgfxy(63.10,46.03)}\pgflineto{\pgfxy(63.74,46.67)}\pgflineto{\pgfxy(64.38,46.03)}\pgflineto{\pgfxy(64.70,46.35)}\pgfclosepath\pgfstroke
\pgfsetlinewidth{0.20mm}\pgfmoveto{\pgfxy(38.95,26.33)}\pgflineto{\pgfxy(67.87,55.26)}\pgfstroke
\pgfmoveto{\pgfxy(38.95,30.46)}\pgflineto{\pgfxy(63.74,55.26)}\pgfstroke
\pgfmoveto{\pgfxy(38.95,34.59)}\pgflineto{\pgfxy(59.61,55.26)}\pgfstroke
\pgfmoveto{\pgfxy(38.95,38.73)}\pgflineto{\pgfxy(55.48,55.26)}\pgfstroke
\pgfmoveto{\pgfxy(38.95,42.86)}\pgflineto{\pgfxy(51.34,55.26)}\pgfstroke
\pgfmoveto{\pgfxy(38.95,46.99)}\pgflineto{\pgfxy(47.21,55.26)}\pgfstroke
\pgfmoveto{\pgfxy(38.95,51.12)}\pgflineto{\pgfxy(43.08,55.26)}\pgfstroke
\pgfmoveto{\pgfxy(43.08,26.33)}\pgflineto{\pgfxy(47.21,22.20)}\pgfstroke
\pgfmoveto{\pgfxy(47.21,30.46)}\pgflineto{\pgfxy(55.48,22.20)}\pgfstroke
\pgfmoveto{\pgfxy(51.34,34.59)}\pgflineto{\pgfxy(63.74,22.20)}\pgfstroke
\pgfmoveto{\pgfxy(55.48,38.73)}\pgflineto{\pgfxy(72.00,22.20)}\pgfstroke
\pgfmoveto{\pgfxy(59.61,42.86)}\pgflineto{\pgfxy(72.00,30.46)}\pgfstroke
\pgfmoveto{\pgfxy(67.87,51.12)}\pgflineto{\pgfxy(72.00,46.99)}\pgfstroke
\pgfmoveto{\pgfxy(69.94,53.19)}\pgflineto{\pgfxy(72.00,51.12)}\pgfstroke
\pgfmoveto{\pgfxy(72.00,40.38)}\pgflineto{\pgfxy(72.00,40.38)}\pgfstroke
\pgfmoveto{\pgfxy(72.00,42.86)}\pgflineto{\pgfxy(65.81,49.06)}\pgfstroke
\pgfmoveto{\pgfxy(63.74,46.99)}\pgflineto{\pgfxy(72.00,38.73)}\pgfstroke
\pgfmoveto{\pgfxy(72.00,34.59)}\pgflineto{\pgfxy(61.67,44.93)}\pgfstroke
\pgfmoveto{\pgfxy(72.00,26.33)}\pgflineto{\pgfxy(57.54,40.79)}\pgfstroke
\pgfmoveto{\pgfxy(67.87,22.20)}\pgflineto{\pgfxy(53.41,36.66)}\pgfstroke
\pgfmoveto{\pgfxy(59.61,22.20)}\pgflineto{\pgfxy(49.28,32.53)}\pgfstroke
\pgfmoveto{\pgfxy(51.34,22.20)}\pgflineto{\pgfxy(45.15,28.40)}\pgfstroke
\pgfmoveto{\pgfxy(43.08,22.20)}\pgflineto{\pgfxy(41.01,24.26)}\pgfstroke
\pgfmoveto{\pgfxy(38.95,55.26)}\pgflineto{\pgfxy(38.95,22.20)}\pgfstroke
\pgfmoveto{\pgfxy(38.95,55.26)}\pgflineto{\pgfxy(38.65,54.06)}\pgflineto{\pgfxy(38.95,54.66)}\pgflineto{\pgfxy(39.25,54.06)}\pgflineto{\pgfxy(38.95,55.26)}\pgfclosepath\pgffill
\pgfmoveto{\pgfxy(38.95,55.26)}\pgflineto{\pgfxy(38.65,54.06)}\pgflineto{\pgfxy(38.95,54.66)}\pgflineto{\pgfxy(39.25,54.06)}\pgflineto{\pgfxy(38.95,55.26)}\pgfclosepath\pgfstroke
\pgfmoveto{\pgfxy(38.95,22.20)}\pgflineto{\pgfxy(72.00,22.20)}\pgfstroke
\pgfmoveto{\pgfxy(72.00,22.20)}\pgflineto{\pgfxy(70.80,22.50)}\pgflineto{\pgfxy(71.40,22.20)}\pgflineto{\pgfxy(70.80,21.90)}\pgflineto{\pgfxy(72.00,22.20)}\pgfclosepath\pgffill
\pgfmoveto{\pgfxy(72.00,22.20)}\pgflineto{\pgfxy(70.80,22.50)}\pgflineto{\pgfxy(71.40,22.20)}\pgflineto{\pgfxy(70.80,21.90)}\pgflineto{\pgfxy(72.00,22.20)}\pgfclosepath\pgfstroke
\pgfsetdash{{0.10mm}{0.50mm}}{0mm}\pgfsetlinewidth{0.10mm}\pgfmoveto{\pgfxy(47.21,55.26)}\pgflineto{\pgfxy(47.21,22.20)}\pgfstroke
\pgfmoveto{\pgfxy(55.48,55.26)}\pgflineto{\pgfxy(55.48,22.20)}\pgfstroke
\pgfmoveto{\pgfxy(63.74,55.26)}\pgflineto{\pgfxy(63.74,22.20)}\pgfstroke
\pgfmoveto{\pgfxy(38.95,30.46)}\pgflineto{\pgfxy(72.00,30.46)}\pgfstroke
\pgfmoveto{\pgfxy(38.95,38.73)}\pgflineto{\pgfxy(72.00,38.73)}\pgfstroke
\pgfmoveto{\pgfxy(38.95,46.99)}\pgflineto{\pgfxy(72.00,46.99)}\pgfstroke
\pgfsetdash{}{0mm}\pgfmoveto{\pgfxy(38.95,22.20)}\pgflineto{\pgfxy(72.00,55.26)}\pgfstroke
\pgfmoveto{\pgfxy(39.27,22.20)}\pgflineto{\pgfxy(39.91,22.84)}\pgflineto{\pgfxy(39.59,23.16)}\pgflineto{\pgfxy(38.95,22.52)}\pgflineto{\pgfxy(38.31,23.16)}\pgflineto{\pgfxy(37.99,22.84)}\pgflineto{\pgfxy(38.63,22.20)}\pgflineto{\pgfxy(37.99,21.56)}\pgflineto{\pgfxy(38.31,21.24)}\pgflineto{\pgfxy(38.95,21.88)}\pgflineto{\pgfxy(39.59,21.24)}\pgflineto{\pgfxy(39.91,21.56)}\pgfclosepath\pgffill
\pgfmoveto{\pgfxy(39.27,22.20)}\pgflineto{\pgfxy(39.91,22.84)}\pgflineto{\pgfxy(39.59,23.16)}\pgflineto{\pgfxy(38.95,22.52)}\pgflineto{\pgfxy(38.31,23.16)}\pgflineto{\pgfxy(37.99,22.84)}\pgflineto{\pgfxy(38.63,22.20)}\pgflineto{\pgfxy(37.99,21.56)}\pgflineto{\pgfxy(38.31,21.24)}\pgflineto{\pgfxy(38.95,21.88)}\pgflineto{\pgfxy(39.59,21.24)}\pgflineto{\pgfxy(39.91,21.56)}\pgfclosepath\pgfstroke
\pgfmoveto{\pgfxy(47.53,30.46)}\pgflineto{\pgfxy(48.17,31.10)}\pgflineto{\pgfxy(47.85,31.42)}\pgflineto{\pgfxy(47.21,30.78)}\pgflineto{\pgfxy(46.57,31.42)}\pgflineto{\pgfxy(46.25,31.10)}\pgflineto{\pgfxy(46.89,30.46)}\pgflineto{\pgfxy(46.25,29.82)}\pgflineto{\pgfxy(46.57,29.50)}\pgflineto{\pgfxy(47.21,30.14)}\pgflineto{\pgfxy(47.85,29.50)}\pgflineto{\pgfxy(48.17,29.82)}\pgfclosepath\pgffill
\pgfmoveto{\pgfxy(47.53,30.46)}\pgflineto{\pgfxy(48.17,31.10)}\pgflineto{\pgfxy(47.85,31.42)}\pgflineto{\pgfxy(47.21,30.78)}\pgflineto{\pgfxy(46.57,31.42)}\pgflineto{\pgfxy(46.25,31.10)}\pgflineto{\pgfxy(46.89,30.46)}\pgflineto{\pgfxy(46.25,29.82)}\pgflineto{\pgfxy(46.57,29.50)}\pgflineto{\pgfxy(47.21,30.14)}\pgflineto{\pgfxy(47.85,29.50)}\pgflineto{\pgfxy(48.17,29.82)}\pgfclosepath\pgfstroke
\pgfmoveto{\pgfxy(55.80,38.73)}\pgflineto{\pgfxy(56.44,39.37)}\pgflineto{\pgfxy(56.12,39.69)}\pgflineto{\pgfxy(55.48,39.05)}\pgflineto{\pgfxy(54.84,39.69)}\pgflineto{\pgfxy(54.52,39.37)}\pgflineto{\pgfxy(55.16,38.73)}\pgflineto{\pgfxy(54.52,38.09)}\pgflineto{\pgfxy(54.84,37.77)}\pgflineto{\pgfxy(55.48,38.41)}\pgflineto{\pgfxy(56.12,37.77)}\pgflineto{\pgfxy(56.44,38.09)}\pgfclosepath\pgffill
\pgfmoveto{\pgfxy(55.80,38.73)}\pgflineto{\pgfxy(56.44,39.37)}\pgflineto{\pgfxy(56.12,39.69)}\pgflineto{\pgfxy(55.48,39.05)}\pgflineto{\pgfxy(54.84,39.69)}\pgflineto{\pgfxy(54.52,39.37)}\pgflineto{\pgfxy(55.16,38.73)}\pgflineto{\pgfxy(54.52,38.09)}\pgflineto{\pgfxy(54.84,37.77)}\pgflineto{\pgfxy(55.48,38.41)}\pgflineto{\pgfxy(56.12,37.77)}\pgflineto{\pgfxy(56.44,38.09)}\pgfclosepath\pgfstroke
\pgfmoveto{\pgfxy(64.06,46.99)}\pgflineto{\pgfxy(64.70,47.63)}\pgflineto{\pgfxy(64.38,47.95)}\pgflineto{\pgfxy(63.74,47.31)}\pgflineto{\pgfxy(63.10,47.95)}\pgflineto{\pgfxy(62.78,47.63)}\pgflineto{\pgfxy(63.42,46.99)}\pgflineto{\pgfxy(62.78,46.35)}\pgflineto{\pgfxy(63.10,46.03)}\pgflineto{\pgfxy(63.74,46.67)}\pgflineto{\pgfxy(64.38,46.03)}\pgflineto{\pgfxy(64.70,46.35)}\pgfclosepath\pgffill
\pgfmoveto{\pgfxy(64.06,46.99)}\pgflineto{\pgfxy(64.70,47.63)}\pgflineto{\pgfxy(64.38,47.95)}\pgflineto{\pgfxy(63.74,47.31)}\pgflineto{\pgfxy(63.10,47.95)}\pgflineto{\pgfxy(62.78,47.63)}\pgflineto{\pgfxy(63.42,46.99)}\pgflineto{\pgfxy(62.78,46.35)}\pgflineto{\pgfxy(63.10,46.03)}\pgflineto{\pgfxy(63.74,46.67)}\pgflineto{\pgfxy(64.38,46.03)}\pgflineto{\pgfxy(64.70,46.35)}\pgfclosepath\pgfstroke
\pgfsetlinewidth{0.20mm}\pgfmoveto{\pgfxy(38.95,26.33)}\pgflineto{\pgfxy(67.87,55.26)}\pgfstroke
\pgfmoveto{\pgfxy(38.95,30.46)}\pgflineto{\pgfxy(63.74,55.26)}\pgfstroke
\pgfmoveto{\pgfxy(38.95,34.59)}\pgflineto{\pgfxy(59.61,55.26)}\pgfstroke
\pgfmoveto{\pgfxy(38.95,38.73)}\pgflineto{\pgfxy(55.48,55.26)}\pgfstroke
\pgfmoveto{\pgfxy(38.95,42.86)}\pgflineto{\pgfxy(51.34,55.26)}\pgfstroke
\pgfmoveto{\pgfxy(38.95,46.99)}\pgflineto{\pgfxy(47.21,55.26)}\pgfstroke
\pgfmoveto{\pgfxy(38.95,51.12)}\pgflineto{\pgfxy(43.08,55.26)}\pgfstroke
\pgfmoveto{\pgfxy(43.08,26.33)}\pgflineto{\pgfxy(47.21,22.20)}\pgfstroke
\pgfmoveto{\pgfxy(47.21,30.46)}\pgflineto{\pgfxy(55.48,22.20)}\pgfstroke
\pgfmoveto{\pgfxy(51.34,34.59)}\pgflineto{\pgfxy(63.74,22.20)}\pgfstroke
\pgfmoveto{\pgfxy(55.48,38.73)}\pgflineto{\pgfxy(72.00,22.20)}\pgfstroke
\pgfmoveto{\pgfxy(59.61,42.86)}\pgflineto{\pgfxy(72.00,30.46)}\pgfstroke
\pgfmoveto{\pgfxy(125.72,51.12)}\pgflineto{\pgfxy(129.86,46.99)}\pgfstroke
\pgfmoveto{\pgfxy(69.94,53.19)}\pgflineto{\pgfxy(72.00,51.12)}\pgfstroke
\pgfmoveto{\pgfxy(72.00,40.38)}\pgflineto{\pgfxy(72.00,40.38)}\pgfstroke
\pgfmoveto{\pgfxy(72.00,42.86)}\pgflineto{\pgfxy(65.81,49.06)}\pgfstroke
\pgfmoveto{\pgfxy(63.74,46.99)}\pgflineto{\pgfxy(72.00,38.73)}\pgfstroke
\pgfmoveto{\pgfxy(72.00,34.59)}\pgflineto{\pgfxy(61.67,44.93)}\pgfstroke
\pgfmoveto{\pgfxy(72.00,26.33)}\pgflineto{\pgfxy(57.54,40.79)}\pgfstroke
\pgfmoveto{\pgfxy(67.87,22.20)}\pgflineto{\pgfxy(53.41,36.66)}\pgfstroke
\pgfmoveto{\pgfxy(59.61,22.20)}\pgflineto{\pgfxy(49.28,32.53)}\pgfstroke
\pgfmoveto{\pgfxy(51.34,22.20)}\pgflineto{\pgfxy(45.15,28.40)}\pgfstroke
\pgfmoveto{\pgfxy(43.08,22.20)}\pgflineto{\pgfxy(41.01,24.26)}\pgfstroke
\pgfsetlinewidth{0.10mm}\pgfmoveto{\pgfxy(96.80,55.26)}\pgflineto{\pgfxy(96.80,22.20)}\pgfstroke
\pgfmoveto{\pgfxy(96.80,55.26)}\pgflineto{\pgfxy(96.50,54.06)}\pgflineto{\pgfxy(96.80,54.66)}\pgflineto{\pgfxy(97.10,54.06)}\pgflineto{\pgfxy(96.80,55.26)}\pgfclosepath\pgffill
\pgfmoveto{\pgfxy(96.80,55.26)}\pgflineto{\pgfxy(96.50,54.06)}\pgflineto{\pgfxy(96.80,54.66)}\pgflineto{\pgfxy(97.10,54.06)}\pgflineto{\pgfxy(96.80,55.26)}\pgfclosepath\pgfstroke
\pgfmoveto{\pgfxy(96.80,22.20)}\pgflineto{\pgfxy(129.86,22.20)}\pgfstroke
\pgfmoveto{\pgfxy(129.86,22.20)}\pgflineto{\pgfxy(128.66,22.50)}\pgflineto{\pgfxy(129.26,22.20)}\pgflineto{\pgfxy(128.66,21.90)}\pgflineto{\pgfxy(129.86,22.20)}\pgfclosepath\pgffill
\pgfmoveto{\pgfxy(129.86,22.20)}\pgflineto{\pgfxy(128.66,22.50)}\pgflineto{\pgfxy(129.26,22.20)}\pgflineto{\pgfxy(128.66,21.90)}\pgflineto{\pgfxy(129.86,22.20)}\pgfclosepath\pgfstroke
\pgfsetdash{{0.10mm}{0.50mm}}{0mm}\pgfmoveto{\pgfxy(105.06,55.26)}\pgflineto{\pgfxy(105.06,22.20)}\pgfstroke
\pgfmoveto{\pgfxy(113.33,55.26)}\pgflineto{\pgfxy(113.33,22.20)}\pgfstroke
\pgfmoveto{\pgfxy(121.59,55.26)}\pgflineto{\pgfxy(121.59,22.20)}\pgfstroke
\pgfmoveto{\pgfxy(96.80,46.99)}\pgflineto{\pgfxy(129.86,46.99)}\pgfstroke
\pgfmoveto{\pgfxy(96.80,38.73)}\pgflineto{\pgfxy(129.86,38.73)}\pgfstroke
\pgfmoveto{\pgfxy(96.80,30.46)}\pgflineto{\pgfxy(129.86,30.46)}\pgfstroke
\pgfsetdash{}{0mm}\pgfsetlinewidth{0.20mm}\pgfmoveto{\pgfxy(69.94,53.19)}\pgflineto{\pgfxy(72.00,51.12)}\pgfstroke
\pgfsetlinewidth{0.30mm}\pgfmoveto{\pgfxy(96.80,22.20)}\pgflineto{\pgfxy(129.86,55.26)}\pgfstroke
\pgfsetlinewidth{0.20mm}\pgfmoveto{\pgfxy(96.80,26.33)}\pgflineto{\pgfxy(125.72,55.26)}\pgfstroke
\pgfmoveto{\pgfxy(96.80,30.46)}\pgflineto{\pgfxy(121.59,55.26)}\pgfstroke
\pgfmoveto{\pgfxy(96.80,34.59)}\pgflineto{\pgfxy(117.46,55.26)}\pgfstroke
\pgfmoveto{\pgfxy(96.80,38.73)}\pgflineto{\pgfxy(113.33,55.26)}\pgfstroke
\pgfmoveto{\pgfxy(96.80,42.86)}\pgflineto{\pgfxy(109.20,55.26)}\pgfstroke
\pgfmoveto{\pgfxy(96.80,46.99)}\pgflineto{\pgfxy(105.06,55.26)}\pgfstroke
\pgfmoveto{\pgfxy(96.80,51.12)}\pgflineto{\pgfxy(100.93,55.26)}\pgfstroke
\pgfmoveto{\pgfxy(100.93,22.20)}\pgflineto{\pgfxy(98.86,24.26)}\pgfstroke
\pgfmoveto{\pgfxy(100.93,26.33)}\pgflineto{\pgfxy(105.06,22.20)}\pgfstroke
\pgfmoveto{\pgfxy(109.20,22.20)}\pgflineto{\pgfxy(103.00,28.40)}\pgfstroke
\pgfmoveto{\pgfxy(105.06,30.46)}\pgflineto{\pgfxy(113.33,22.20)}\pgfstroke
\pgfmoveto{\pgfxy(117.46,22.20)}\pgflineto{\pgfxy(107.13,32.53)}\pgfstroke
\pgfmoveto{\pgfxy(109.20,34.59)}\pgflineto{\pgfxy(121.59,22.20)}\pgfstroke
\pgfmoveto{\pgfxy(125.72,22.20)}\pgflineto{\pgfxy(111.26,36.66)}\pgfstroke
\pgfmoveto{\pgfxy(113.33,38.73)}\pgflineto{\pgfxy(129.86,22.20)}\pgfstroke
\pgfmoveto{\pgfxy(129.86,26.33)}\pgflineto{\pgfxy(115.39,40.79)}\pgfstroke
\pgfmoveto{\pgfxy(117.46,42.86)}\pgflineto{\pgfxy(129.86,30.46)}\pgfstroke
\pgfmoveto{\pgfxy(129.86,34.59)}\pgflineto{\pgfxy(119.53,44.93)}\pgfstroke
\pgfmoveto{\pgfxy(121.59,46.99)}\pgflineto{\pgfxy(129.86,38.73)}\pgfstroke
\pgfmoveto{\pgfxy(129.86,42.86)}\pgflineto{\pgfxy(123.66,49.06)}\pgfstroke
\pgfmoveto{\pgfxy(127.79,53.19)}\pgflineto{\pgfxy(129.86,51.12)}\pgfstroke
\pgfputat{\pgfxy(80.10,87.49)}{\pgfbox[bottom,left]{$\textnormal{Step 1}$}}
\pgfputat{\pgfxy(80.10,40.38)}{\pgfbox[bottom,left]{$\textnormal{Step 3}$}}
\pgfsetlinewidth{0.10mm}\pgfmoveto{\pgfxy(88.37,62.69)}\pgflineto{\pgfxy(80.10,59.39)}\pgfstroke
\pgfmoveto{\pgfxy(80.10,59.39)}\pgflineto{\pgfxy(81.33,59.56)}\pgflineto{\pgfxy(80.66,59.61)}\pgflineto{\pgfxy(81.11,60.11)}\pgflineto{\pgfxy(80.10,59.39)}\pgfclosepath\pgffill
\pgfmoveto{\pgfxy(80.10,59.39)}\pgflineto{\pgfxy(81.33,59.56)}\pgflineto{\pgfxy(80.66,59.61)}\pgflineto{\pgfxy(81.11,60.11)}\pgflineto{\pgfxy(80.10,59.39)}\pgfclosepath\pgfstroke
\pgfputat{\pgfxy(79.28,63.52)}{\pgfbox[bottom,left]{$\textnormal{Step 2}$}}
\end{pgfpicture}%
\caption{Three steps to construct a ``continuous'' intensity function.}
\label{f: spread_prob_1}
\end{figure}
As we aim to find a continuous intensity function over the entire space $[-\log n, \infty)^2$,
we exchange $k^\star$ and $l^\star$ in the expression of the point probability $P(X^\star_1=k^\star, X_2^\star=l^\star)$ from (\ref{d: MO_Geo_mass_fct_normalised})
by $s$ and $t$, respectively. 
E.g., suppose that $k^\star < l^\star$. Then we replace the integral in (\ref{p: spread_prob_1}) by
\[
\int \int_{\Rstar} \frac{1-p_{00} /q_2 - q_2 + p_{00}}{\log^2 (1/p_{00})} \,
\mathrm{e}^{-\frac{\log (p_{00} /q_2)}{\log p_{00}}\,s}
\mathrm{e}^{-\frac{\log q_2}{\log p_{00}}\,t} \De s \De t
=
\frac{ 1- p_{00}/q_2 - q_2 + p_{00}}{\log (p_{00}/q_2) \log q_2}\,P(X^\star_1=k^\star, X_2^\star=l^\star).
\]
The switch to variable $s$ and $t$ thus results only in the multiplication of the original point probability by a factor. 
The goal, however, is to integrate a function in $s$ and $t$ over $\Rstar$ and obtain the original point probability. This may
be achieved by simply dividing the integrand 
by the multiplying factor 
. Hence, we rewrite the mean as follows
\[
\sum_{ (k^\star,l^\star) \in A^\star, k^\star \neq l^\star } 
\int \int_{\Rstar} \lambda^\star(s,t)\De s\De t + n\sum_{(k^\star, k^\star) \in A^\star} P\left(X_1^\star=k^\star, X_2^\star=k^\star\right),
\]
where
\begin{equation}\label{p: int_fct_above}
\lambda^\star(s,t) = 
\frac{\log (p_{00}/q_2) \log q_2}{\log^2(1/p_{00})}\, \mathrm{e}^{-\frac{\log (p_{00}/q_2)}{\log p_{00}}\,s} \mathrm{e}^{-\frac{\log q_2}{\log p_{00}}\,t}, \,
\forall (s,t) \in \Rstar\textnormal{ with }k^\star < l^\star.
\end{equation}
Analogously, we find
\begin{equation}\label{p: int_fct_below}
\lambda^\star(s,t) = 
\frac{\log (p_{00}/q_1) \log q_1}{\log^2(1/p_{00})}\, \mathrm{e}^{-\frac{\log q_1}{\log p_{00}}\,s} \mathrm{e}^{-\frac{\log (p_{00}/q_1)}{\log p_{00}}\,t}
, \, \forall (s,t) \in \Rstar\textnormal{ with }k^\star > l^\star.
\end{equation}
(\ref{p: int_fct_above}) and (\ref{p: int_fct_below}) supply suitable choices for the intensity function on coordinate 
rectangles lying above and below the diagonal, respectively. 

\textbf{Step 2.} We expand $\lambda^\star(s,t)$ from (\ref{p: int_fct_above}) and (\ref{p: int_fct_below}) to the entire space (without the diagonal), i.e. we define
\[
\lambda^\star(s,t) := \left\{ \begin{array}{ll} 
\frac{\log (p_{00}/q_2) \log q_2}{\log^2(1/p_{00})}\, \mathrm{e}^{-\frac{\log (p_{00}/q_2)}{\log p_{00}}\,s} \mathrm{e}^{-\frac{\log q_2}{\log p_{00}}\,t} & \textnormal{ for } s<t,\\
\frac{\log (p_{00}/q_1) \log q_1}{\log^2(1/p_{00})} \, \mathrm{e}^{-\frac{\log q_1}{\log p_{00}}\,s} \mathrm{e}^{-\frac{\log (p_{00}/q_1)}{\log p_{00}}\,t} & \textnormal{ for } s>t,
\end{array}\right.  
\]
for all $(s,t) \in [-\log n, \infty)^2$
. 
However, this adds surplus mass on the diagonal rectangles $R^\star_{k^\star, k^\star}$.

\textbf{Step 3.} We adjust for the surplus mass on the diagonal rectangles by subtracting it from the point probabilities of the diagonal lattice points $(k^\star,k^\star)$,
and accordingly rewrite the mean as follows:
\begin{equation}\label{p: spread_prob_5}
\int \int_{A^\star} \lambda_n^\star(s,t)\De s\De t 
+ n\sum_{(k^\star, k^\star) \in A^\star} \left\{ P(X_1^\star=k^\star, X_2^\star=k^\star) - \frac{1}{n}\int_{R^\star_{k^\star, k^\star}} \lambda_n^\star(s,t)\De s\De t\right\}.
\end{equation}
Computation of the term in curly brackets shows that the new mass that we put on the diagonal segments of each diagonal rectangle $R^\star_{k^\star,k^\star}$ is given by
\begin{equation}\label{p: spread_prob_6}
\frac{ \mathrm{e}^{-k^\star}}{n}\,(1-p_{00})\left[ \frac{\log (1/q_1 q_2)}{\log (1/p_{00})}-1\right]. 
\end{equation}
Note that this equals 
\[
\int_{k^\star}^{k^\star + \log(1/p_{00})} \frac{\mathrm{e}^{-s}}{n}\, \left[\frac{\log (1/q_1q_2)}{\log (1/p_{00})}-1 \right]\De s,           
\]
for each $k^\star \in E^\star$, where we have parameterised the intensity function on the diagonal as projection along the $s$-axis.
We thus define:
\begin{align}\label{d: ls}
\begin{split}
\lambda^\star(s,t)&= \left \{
\begin{array}{lll} 
\frac{\log (p_{00}/q_2) \log q_2}{\log^2(1/p_{00})}\, \mathrm{e}^{-\frac{\log (p_{00}/q_2)}{\log p_{00}}\,s} \mathrm{e}^{-\frac{\log q_2}{\log p_{00}}\,t} &\textnormal{ for }& s < t ,  \\
\frac{\log (p_{00}/q_1) \log q_1}{\log^2(1/p_{00})}\, \mathrm{e}^{-\frac{\log q_1}{\log p_{00}}\,s} \mathrm{e}^{-\frac{\log (p_{00}/q_1)}{\log p_{00}}\,t} &\textnormal{ for }& s > t,   
\end{array} \right.\\
\acute{\lambda}^\star(s)&=\frac{\log (p_{00}/q_1 q_2)}{\log (1/p_{00})}\, \mathrm{e}^{-s}\quad \textnormal{for } s = t.\\
\end{split}
\end{align}
The above construction guarantees the following:
\begin{prop}\label{t: MO_Geo_same_meas_rectangles}
Let $\bl^\star$, $\lambda^\star$ and $\acute{\lambda}^\star$ be defined by (\ref{d: MO_Geo_cont_int_meas}) and (\ref{d: ls}). Then,\\
(i)$\,\, \displaystyle  \bl^\star \left(\Rstar\right) = \bpis\left(\Rstar\right)$, for any $(k^\star,l^\star) \in E^\star$,\\
(ii)$\,\, \displaystyle \int_{[-\log n, \infty)^2} \lambda^\star(s,t)\De s\De t + \int_{-\log n}^\infty \acute{\lambda}^\star(s)\De s= n$. \\ \qed 
\end{prop}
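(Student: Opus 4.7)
The plan is to verify (i) by direct computation on each type of rectangle $R^\star_{k^\star,l^\star}$ and then derive (ii) from (i) by the tiling property of the lattice. Let me set the shorthand $L := \log(1/p_{00})$ for the side length of the rectangles, and observe that $[-\log n,\,\infty)^2$ is partitioned (up to boundary) by $\{R^\star_{k^\star,l^\star} : (k^\star,l^\star) \in E^\star\}$. Also note the crucial identities $p_{00}^{\log(p_{00}/q_2)/\log p_{00}} = p_{00}/q_2$ and $p_{00}^{\log q_2/\log p_{00}} = q_2$ (and analogously with $q_1$), and that in each case the two exponents $c_1,c_2$ appearing in $\lambda^\star$ satisfy $c_1+c_2=1$.

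For an off-diagonal rectangle with $k^\star < l^\star$, $R^\star_{k^\star,l^\star}$ lies entirely in $\{s<t\}$, so $\bl^\star(R^\star_{k^\star,l^\star}) = \int\!\!\int_{R^\star_{k^\star,l^\star}} \lambda^\star(s,t)\,\De s\,\De t$. Since $\lambda^\star$ factors as a product of two exponentials, this integral splits into a product of one-dimensional integrals, evaluating to $e^{-c_1 k^\star - c_2 l^\star}(1 - p_{00}/q_2)(1-q_2)$. Expanding the product gives exactly $1 - p_{00}/q_2 - q_2 + p_{00}$, matching $n P(X_1^\star=k^\star, X_2^\star=l^\star) = \bpis(R^\star_{k^\star,l^\star})$ from (\ref{d: MO_Geo_mass_fct_normalised}). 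The case $k^\star > l^\star$ is symmetric with $q_1$ in place of $q_2$.

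The diagonal case $k^\star = l^\star$ is the main point requiring care, since $R^\star_{k^\star,k^\star}$ is cut by the diagonal into two triangles plus the diagonal line segment. I would compute (a) the integral of $\lambda^\star$ over the upper triangle $\{(s,t) \in R^\star_{k^\star,k^\star}: s<t\}$, (b) the integral over the lower triangle, and (c) the one-dimensional integral $\int_{k^\star}^{k^\star + L} \acute{\lambda}^\star(s)\,\De s = \frac{\log(p_{00}/q_1 q_2)}{L} e^{-k^\star}(1 - p_{00})$. For (a), inner integration in $t$ gives a difference of exponentials and (using $c_1+c_2=1$) one obtains $e^{-k^\star}\bigl[(1-q_2) - \tfrac{\log q_2}{\log p_{00}}(1-p_{00})\bigr]$; an analogous computation for (b) gives $e^{-k^\star}\bigl[(1-q_1) - \tfrac{\log q_1}{\log p_{00}}(1-p_{00})\bigr]$. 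The terms involving $\log q_1 + \log q_2 = \log(q_1 q_2)$ cancel exactly against (c), leaving $e^{-k^\star}(1 - q_1 - q_2 + p_{00}) = n P(X_1^\star = X_2^\star = k^\star) = \bpis(R^\star_{k^\star,k^\star})$, as desired. This cancellation is precisely what motivated the form of $\acute{\lambda}^\star$ in Step 3.

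For part (ii), summing the identity in (i) over $(k^\star,l^\star) \in E^\star$ and using the tiling of $[-\log n, \infty)^2$ by the disjoint rectangles gives
\begin{equation*}
\int\!\!\int_{[-\log n,\infty)^2} \lambda^\star(s,t)\,\De s\,\De t + \int_{-\log n}^\infty \acute{\lambda}^\star(s)\,\De s
= \sum_{(k^\star,l^\star) \in E^\star} \bpis(R^\star_{k^\star,l^\star}) = n P(\mathbf{X}^\star \in [-\log n,\infty)^2) = n,
\end{equation*}
since $\mathbf{X}^\star$ takes values in $E^\star \subset [-\log n, \infty)^2$. The main obstacle, as noted, is the careful bookkeeping in the diagonal case; once the exponents are tracked and the identity $c_1 + c_2 = 1$ is exploited, the cancellation is clean.
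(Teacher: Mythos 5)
Your proof is correct and follows essentially the route the paper takes implicitly: the paper's Steps 1--3 in Section \ref{s: MO_Geo_cont_int_fct} construct $\lambda^\star$ and $\acute{\lambda}^\star$ precisely so that the rectangle integrals reproduce the point probabilities (your off-diagonal computation is the paper's Step 1 run in reverse, and your diagonal cancellation is exactly the adjustment \eqref{p: spread_prob_6} of Step 3), after which the proposition is stated with no further proof. Your explicit verification of the identities $c_1+c_2=1$, $p_{00}^{c_1}=p_{00}/q_2$, and the $\log(q_1q_2)$ cancellation on the diagonal is accurate, and deducing (ii) by summing (i) over the tiling is sound.
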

\begin{remark}
It can readily be shown that the new intensity functions 
$\lambda^\star$ and $\acute{\lambda}^\star$ 
may be expressed 
in the original coordinate system
by
\begin{align*}
\lambda(x,y)&=\lambda_n(x,y) = \left\{ 
\begin{array}{lll}
n \log (q_2) \log \left(\frac{p_{00}}{q_2}\right)   p_{00}^x q_2^{y-x} &\textnormal{ for } x <y,\\
n \log (q_1) \log \left(\frac{p_{00}}{q_1}\right) q_1^{x-y} p_{00}^y &\textnormal{ for } x>y,
\end{array} \right.\\
\acute{\lambda}(x) &= \acute{\lambda}_n(x)= n \log \left( \frac{p_{00}}{q_1 q_2}\right) p_{00}^x \quad \textnormal{ for } x=y,
\end{align*} 
for any $(x,y) \in [0,\infty)^2$. We recognise a weighted and continuous version of $P(X_1 \ge k,X_2 \ge l)$ from (\ref{d: MO_survival}). 
\end{remark}
\subsubsection{Assumptions on the distributional parameters}\label{s: MO_Geo_cont_nicer_int_fct}
The continuous intensity measure $\bl^\star$ defined by (\ref{d: MO_Geo_cont_int_meas}) and (\ref{d: ls}) depends on the parameters $q_1$, $q_2$ and $p_{00}$ of the Marshall-Olkin geometric distribution.
Our aim is to determine a bound on the error for the approximation of the Poisson process with mean measure $\mathbb{E}\Xi^\star_{A^\star}$, living on the lattice $E^\star$,
by a Poisson process with mean measure $\bl^\star$. 
As Section \ref{s: MO_Geo_d2} will show, the 
probability of simultaneous success, $p_{11}$, for the Marshall-Olkin geometric distribution, will have to tend to $0$ as $n \to \infty$.
Since $p_{00}+p_{01}+p_{10}+p_{11}=1$, this of course influences 
the distributional parameters $p_{00}, q_1$ and $q_2$ in that it also makes them dependent on $n$.
The continuous intensity functions $\lambda^\star$ and $\acute{\lambda}^\star$ thus have the drawback that, through their dependence on the 
parameters $p_{00}$, $q_1$ and $q_2$, they are also dependent on $n$.
We thus try to find other suitable continuous intensity functions that no longer vary with the sample size.  

For simplicity, we make the assumption that $p_{10}$ and $p_{01}$ vary at the same rate as $p_{11}=p_{11n}$; more precisely, assume
$p_{10} = p_{10n}=\gamma p_{11n}$ and $p_{01} = p_{01n}=\delta p_{11n}$, where $\gamma$ and $\delta$ are strictly positive real numbers, bounded such
that $p_{10}$ and $p_{01}$ are smaller than 1. We assume that $p_{11n}$ tends to $0$ as $n \to \infty$ at a rate that will be determined later, and 
express the distributional parameters as functions of it:
\begin{equation}\label{d: conditions}
\begin{split}
q_{1n} &=  1-(1+\gamma)p_{11n},\\
q_{2n} &= 1-(1+\delta)p_{11n},\\
p_{00n} &= 1-(1+\gamma+\delta)p_{11n}.
\end{split}
\end{equation}
Plugging into (\ref{d: ls}) and using the relation $\log(1-z) \sim -z$ for $|z| < 1$ and $z \to 0$, we find that $\lambda^\star(s,t)$ and $\acute{\lambda}^\star(s)$ are, 
for $p_{11n} \to 0$ as $n \to \infty$,
asymptotically equal to 
\begin{align}\label{d: lsnew}
\begin{split}
 \lsnew(s,t)&:= \left \{
 \begin{array}{lll} 
 \frac{\gamma(1+\delta)}{(\gade)^2}\,\mathrm{e}^{-\frac{\gamma}{\gade}\,s}\mathrm{e}^{-\frac{1 + \delta}{\gade}\,t} &\textnormal{ for }& s < t ,  \\
 \frac{\delta (1 + \gamma)}{(\gade)^2}\,\mathrm{e}^{-\frac{1+\gamma}{\gade}\,s} \mathrm{e}^{-\frac{\delta}{\gade}\,t} &\textnormal{ for }& s > t,  
 \end{array} \right.\\
 \text{and } \quad \acute{\lambda}^\star_{\gamma, \delta}(s)&:=\frac{1}{\gade}\, \mathrm{e}^{-s} \quad \textnormal{ for }  s = t,
\end{split}
\end{align}
respectively, for all $(s,t) \in [-\log n, \infty)^2$. At first glance $\lsnew$ and $\acute{\lambda}^\star_{\gamma, \delta}$ seem to be valid choices 
for continuous intensity functions independent of $n$. 
We will investigate in Section \ref{s: MO_Geo_nicer} whether a Poisson process with mean measure $\bl^\star$ on $A^\star$ may indeed be approximated by a 
Poisson process with mean measure 
\begin{equation}\label{d: blsnew}
\blsnew(B^\star) := \int \int_{A^\star \cap B^\star} \lsnew(s,t)\De s\De t + \int_{A^\star \cap B^\star \cap \{(s,t):\, s=t\}}\acute{\lambda}^\star_{\gamma, \delta}(s)\De s,
\end{equation}
for all $B^\star \in \mathcal{B}([-\log n, \infty)^2)$. 
To do the corresponding error calculations for a fixed sample size $n$ we first need to examine in further detail the differences between 
the exponent terms in $\lambda^\star(s,t)$ and $\lsnew(s,t)$:
\begin{lemma}\label{t: Lemma_cond} 
For each integer $n \ge 1$, let $p_{11n} \in (0,1)$ and let $q_{1n}$, $q_{2n}$, $p_{00n}$ $\in (0,1)$  be defined by (\ref{d: conditions}). Then,
\begin{align*}
(i) \qquad 0 \le \frac{1+\delta}{\gade} - \frac{\log q_{2n}}{\log p_{00n}} &\le \frac{\gamma p_{11n}}{1-(\gade)p_{11n}}\,,\\
(ii) \qquad 0 \le \frac{1+\gamma}{\gade} -\frac{\log q_{1n}}{\log p_{00n}} &\le \frac{\delta p_{11n}}{1-(\gade)p_{11n}}\,.
\end{align*}
Moreover,
\begin{align*}
(iii) \qquad 0 \le \log\left(\frac{q_{2n}}{p_{00n}}\right) \le \frac{\gamma p_{11n}}{1-(\gade)p_{11n}}\,, \\
(iv) \qquad 0 \le \log\left(\frac{q_{1n}}{p_{00n}}\right) \le \frac{\delta p_{11n}}{1-(\gade)p_{11n}}\,,
\end{align*}
and 
\[
(v)\qquad \log\left(\frac{1}{p_{00n}}\right)\log\left(\frac{p_{00n}}{q_{1n}q_{2n}}\right) \le \frac{(\gade)p_{11n}}{\{1-(\gade)p_{11n}\}^2}\,.
\]
\end{lemma}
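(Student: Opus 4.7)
Write $p=p_{11n}$ for brevity; the assumptions (\ref{d: conditions}) immediately give the key algebraic identities $q_{2n}-p_{00n}=\gamma p$, $q_{1n}-p_{00n}=\delta p$ and $1-p_{00n}=(\gade)p$. All five inequalities will come down to pushing through these identities with three elementary analytic facts: $\log(1+z)\le z$ for $z\ge 0$, the pair $z\le -\log(1-z)\le z/(1-z)$ for $z\in[0,1)$, and the concavity bound $(1-z)^a\le 1-az$ valid for $a\in(0,1)$ and $z\in[0,1)$.

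Parts (iii) and (iv) are straightforward: write $q_{2n}/p_{00n}=1+\gamma p/p_{00n}\ge 1$, take logarithms, and apply $\log(1+z)\le z$; the case (iv) is identical with $\gamma$ and $\delta$ exchanged. For the lower bounds in (i) and (ii), I would apply the concavity bound with $a=(1+\delta)/(\gade)\in(0,1)$ and $z=(\gade)p$ to get $p_{00n}^{(1+\delta)/(\gade)}\le 1-(1+\delta)p=q_{2n}$. Taking logs (both are negative) and then dividing by $\log p_{00n}<0$ flips the inequality and yields exactly $\log q_{2n}/\log p_{00n}\le(1+\delta)/(\gade)$; part (ii) again follows by symmetry.

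For the upper bound in (i), the main manipulation is the rewrite
\[
\frac{1+\delta}{\gade}-\frac{\log q_{2n}}{\log p_{00n}}
=-\frac{\gamma}{\gade}+\frac{\log(q_{2n}/p_{00n})}{-\log p_{00n}},
\]
obtained by adding and subtracting $1$ in the second fraction. The numerator of the last term is bounded by $\gamma p/p_{00n}$ by part (iii), and the denominator is bounded below by $(\gade)p$ via $-\log(1-z)\ge z$. Plugging in and using $1-p_{00n}=(\gade)p$ collapses the combination to the desired $\gamma p/p_{00n}$; (ii) is again symmetric. Finally for (v), I bound the two factors separately: $\log(1/p_{00n})\le(\gade)p/p_{00n}$ from $-\log(1-z)\le z/(1-z)$, and for $\log(p_{00n}/(q_{1n}q_{2n}))$ I use the identity $p_{00n}-q_{1n}q_{2n}=p_{00n}p_{11n}-p_{10n}p_{01n}\le p_{00n}p$, together with the elementary $q_{1n}q_{2n}\ge p_{00n}^2$ (both $q_{in}\ge p_{00n}$), to get $\log(p_{00n}/(q_{1n}q_{2n}))\le p/p_{00n}$ via $\log(1+w)\le w$. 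Multiplying and using $p\le 1$ upgrades the factor $p^2$ to $p$ and produces the claimed bound.

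No step is genuinely hard; the only mild subtlety is spotting the rewrite that decouples the $\frac{\log q_{2n}}{\log p_{00n}}$ ratio into a constant plus a $\log(q_{2n}/p_{00n})$ piece, which is what allows (iii) to feed directly into (i). The sign bookkeeping (everywhere both $\log q_{2n}$ and $\log p_{00n}$ are negative, and $q_{1n}q_{2n}\le p_{00n}$ under the standing Marshall-Olkin assumption of Section~\ref{s: MO_Geo}) is what I would check most carefully when writing out the argument in full.
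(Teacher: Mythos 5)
Your proposal is correct; all five bounds check out, and the overall toolkit (the elementary inequalities $\log(1+z)\le z$ and $z\le-\log(1-z)\le z/(1-z)$, plus the identities $q_{2n}-p_{00n}=\gamma p_{11n}$, $1-p_{00n}=(\gade)p_{11n}$) is the same as the paper's. Parts (iii) and (iv) coincide with the paper's argument exactly. Where you diverge is in (i)--(ii) and (v). For the lower bounds in (i)--(ii) the paper invokes that $-\log(1-z)/z$ is increasing, which is equivalent to your Bernoulli bound $(1-z)^a\le 1-az$; no real difference. For the upper bounds the paper instead uses that $-(1-z)\log(1-z)/z$ is decreasing to compare $-\log q_{2n}/(-\log p_{00n})$ with $(1+\delta)(1-(\gade)p_{11n})/[(\gade)(1-(1+\delta)p_{11n})]$, whereas your decomposition $\frac{1+\delta}{\gade}-\frac{\log q_{2n}}{\log p_{00n}}=-\frac{\gamma}{\gade}+\frac{\log(q_{2n}/p_{00n})}{-\log p_{00n}}$ lets (iii) feed directly into (i); both collapse to the same bound $\gamma p_{11n}/p_{00n}$, but yours avoids the slightly less standard monotonicity fact. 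For (v) the paper simply bounds $\log(p_{00n}/(q_{1n}q_{2n}))\le\log(1/p_{00n})$ using $q_{in}\ge p_{00n}$ and then estimates $(-\log p_{00n})^2\le(1-p_{00n})/p_{00n}^2$; your route through the identity $p_{00n}-q_{1n}q_{2n}=p_{00n}p_{11n}-p_{10n}p_{01n}$ and $q_{1n}q_{2n}\ge p_{00n}^2$ gives a tighter intermediate bound ($p_{11n}^2$ rather than $p_{11n}$ before the final relaxation), but it does lean on the sign condition $p_{00n}\ge q_{1n}q_{2n}$, which is a standing assumption of Section~\ref{s: MO_Geo} rather than a consequence of (\ref{d: conditions}) alone; if it fails the left side of (v) is negative and the claim is trivial, so you should add that one-line case split (the paper's version of (v) needs no such assumption). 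With that small remark, your proof is complete.
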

\begin{proof}
(i) For ease of notation we omit the subscript $n$. Since, for all $|z| < 1$, $-\log(1-z)/z$ is increasing and $-(1-z)\log(1-z)/z$ is decreasing, we obtain the
following lower and upper bound, respectively, for $-(\log q_2)/(\log p_{00})$, where $q_2 < p_{00}$:
\[
- \frac{1+\delta}{\gade} \le - \frac{\log q_2}{\log p_{00}} \le - \frac{(1+\delta)\cdot [1-(\gade)p_{11}]}{[1-(1+\delta)p_{11}]\cdot(\gade)}.
\]
Therefore,
\begin{align*}
0 \le \frac{1+\delta}{\gade} - \frac{\log q_2}{\log p_{00}} 
&\le \frac{1+\delta}{\gade} \left\{ 1- \frac{1-(\gade)p_{11}}{1-(1+\delta)p_{11}}\right\}\\
&= \frac{(1+\delta)\gamma p_{11}}{(\gade) [1- (1+\delta)p_{11}]}
\le \frac{\gamma p_{11}}{1-(\gade)p_{11}}\,.
\end{align*}
(iii) Moreover, since $q_2 =p_{00}+p_{10}$, we have $\log(q_2/ p_{00}) \ge 0$. Using $\log(1+ z) \le z$ for positive $z$, we obtain
\[
\log(q_2/p_{00}) = \log \left( \frac{p_{00}+p_{10}}{p_{00}}\right) \le \frac{p_{10}}{p_{00}} = \frac{\gamma p_{11}}{1-(\gade)p_{11}}\,.
\]
(ii) and (iv) can be shown analogously to (i) and (iii), respectively.\\
(v) We have
\begin{eqnarray*}
&&\log\left(\frac{1}{p_{00n}}\right)\log\left(\frac{p_{00n}}{q_{1n}q_{2n}}\right) 
= (-\log p_{00}) \left\{ -\log(p_{00} + p_{01}) - \log(p_{00}+p_{10}) + \log p_{00}\right\}\\
&& \le (-\log p_{00}) \left\{ -\log p_{00} - \log p_{00} + \log p_{00} \right\} = (-\log p_{00})^2 \le \frac{(1-p_{00})^2}{p_{00}^2} \le \frac{1-p_{00}}{p_{00}^2} \\
&&= \frac{(\gade)p_{11}}{\left\{ 1- (\gade)p_{11}\right\}^2}\,. 
\end{eqnarray*}
\end{proof}
\noindent We will use Lemma \ref{t: Lemma_cond} to determine error estimates in Sections \ref{s: MO_Geo_d2} and \ref{s: MO_Geo_nicer}. 
\begin{remark}
We suppose here that $\gamma$ and $\delta$ do not vary with $n$. However, the asymptotic equivalence of (\ref{d: ls}) and (\ref{d: lsnew}),
and later results (i.e. Propositions \ref{t: MO_Geo_disc_to_cont_appl} and \ref{t: MO_Geo_cont_to_nicer_appl}, as well as Corollary \ref{t: MO_Geo_final_bound_d2}) 
also hold for the case $\gamma=\gamma_n$ and $\delta=\delta_n$. These results are thus actually stronger than we make them out to be.  
\end{remark}
\subsubsection{Approximation in $d_2$ by a Poisson process with continuous intensity}\label{s: MO_Geo_d2} 
We now determine the error of the approximation of the Poisson process with mean measure $\bpis$, living on lattice points $(k^\star, l^\star) \in A^\star \cap E^\star$, 
and the Poisson process with continuous mean measure $\bls$, living on $A^\star \cap [-\log n, \infty)^2$. 
Note that any not too small set $\As \in \mathcal{B}([-\log n, \infty)^2)$ contains subsets that are unions of coordinate rectangles $\Rstar$, i.e. of the form
\begin{equation}\label{d: TildeAstar}
 \bigcup_{(k^\star,l^\star) \in M^\star}  R^\star_{k^\star, l^\star} \subseteq A^\star, 
\end{equation}
where $M^\star$ is a countable subset of $E^\star$. 
Let $\tilde{A}^\star$ denote the biggest subset of $A^\star$ of the form (\ref{d: TildeAstar}).
In order to prove Theorem \ref{t: approx_disc_cont}, 
we rely on arguments used by \cite{Xia:1995} used to show {Proposition \ref{t: d2_two_PRM}}.
\begin{figure}
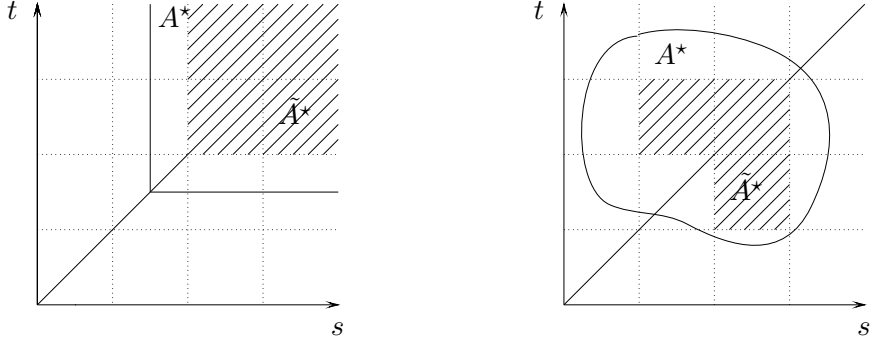

\centering
\begin{pgfpicture}{26.00mm}{65.39mm}{142.90mm}{120.00mm}
\pgfsetxvec{\pgfpoint{1.00mm}{0mm}}
\pgfsetyvec{\pgfpoint{0mm}{1.00mm}}
\color[rgb]{0,0,0}\pgfsetlinewidth{0.10mm}\pgfsetdash{}{0mm}
\pgfmoveto{\pgfxy(30.00,110.00)}\pgflineto{\pgfxy(30.00,70.00)}\pgfstroke
\pgfmoveto{\pgfxy(30.00,110.00)}\pgflineto{\pgfxy(29.70,108.80)}\pgflineto{\pgfxy(30.00,109.40)}\pgflineto{\pgfxy(30.30,108.80)}\pgflineto{\pgfxy(30.00,110.00)}\pgfclosepath\pgffill
\pgfmoveto{\pgfxy(30.00,110.00)}\pgflineto{\pgfxy(29.70,108.80)}\pgflineto{\pgfxy(30.00,109.40)}\pgflineto{\pgfxy(30.30,108.80)}\pgflineto{\pgfxy(30.00,110.00)}\pgfclosepath\pgfstroke
\pgfsetdash{{0.10mm}{0.50mm}}{0mm}\pgfmoveto{\pgfxy(40.00,110.00)}\pgflineto{\pgfxy(40.00,70.00)}\pgfstroke
\pgfmoveto{\pgfxy(50.00,110.00)}\pgflineto{\pgfxy(50.00,70.00)}\pgfstroke
\pgfmoveto{\pgfxy(60.00,110.00)}\pgflineto{\pgfxy(60.00,70.00)}\pgfstroke
\pgfmoveto{\pgfxy(30.00,80.00)}\pgflineto{\pgfxy(70.00,80.00)}\pgfstroke
\pgfmoveto{\pgfxy(30.00,90.00)}\pgflineto{\pgfxy(70.00,90.00)}\pgfstroke
\pgfmoveto{\pgfxy(30.00,100.00)}\pgflineto{\pgfxy(70.00,100.00)}\pgfstroke
\pgfsetdash{}{0mm}\pgfmoveto{\pgfxy(30.00,70.00)}\pgflineto{\pgfxy(70.00,110.00)}\pgfstroke
\pgfsetlinewidth{0.30mm}\pgfmoveto{\pgfxy(40.00,120.00)}\pgflineto{\pgfxy(40.00,120.00)}\pgfstroke
\pgfsetlinewidth{0.10mm}\pgfmoveto{\pgfxy(35.00,70.00)}\pgflineto{\pgfxy(35.00,70.00)}\pgfstroke
\pgfmoveto{\pgfxy(30.00,70.00)}\pgflineto{\pgfxy(70.00,70.00)}\pgfstroke
\pgfmoveto{\pgfxy(70.00,70.00)}\pgflineto{\pgfxy(68.80,70.30)}\pgflineto{\pgfxy(69.40,70.00)}\pgflineto{\pgfxy(68.80,69.70)}\pgflineto{\pgfxy(70.00,70.00)}\pgfclosepath\pgffill
\pgfmoveto{\pgfxy(70.00,70.00)}\pgflineto{\pgfxy(68.80,70.30)}\pgflineto{\pgfxy(69.40,70.00)}\pgflineto{\pgfxy(68.80,69.70)}\pgflineto{\pgfxy(70.00,70.00)}\pgfclosepath\pgfstroke
\pgfmoveto{\pgfxy(45.00,110.00)}\pgflineto{\pgfxy(45.00,85.00)}\pgfstroke
\pgfmoveto{\pgfxy(45.00,85.00)}\pgflineto{\pgfxy(70.00,85.00)}\pgfstroke
\pgfsetdash{{2.00mm}{1.00mm}}{0mm}\pgfmoveto{\pgfxy(52.00,87.00)}\pgfstroke
\pgfputat{\pgfxy(69.00,66.00)}{\pgfbox[bottom,left]{$s$}}
\pgfputat{\pgfxy(26.00,108.00)}{\pgfbox[bottom,left]{$t$}}
\pgfsetdash{}{0mm}\pgfmoveto{\pgfxy(50.00,92.00)}\pgflineto{\pgfxy(68.00,110.00)}\pgfstroke
\pgfmoveto{\pgfxy(50.00,94.00)}\pgflineto{\pgfxy(66.00,110.00)}\pgfstroke
\pgfmoveto{\pgfxy(50.00,96.00)}\pgflineto{\pgfxy(64.00,110.00)}\pgfstroke
\pgfmoveto{\pgfxy(50.00,98.00)}\pgflineto{\pgfxy(62.00,110.00)}\pgfstroke
\pgfmoveto{\pgfxy(50.00,100.00)}\pgflineto{\pgfxy(60.00,110.00)}\pgfstroke
\pgfmoveto{\pgfxy(50.00,102.00)}\pgflineto{\pgfxy(58.00,110.00)}\pgfstroke
\pgfmoveto{\pgfxy(50.00,104.00)}\pgflineto{\pgfxy(56.00,110.00)}\pgfstroke
\pgfmoveto{\pgfxy(50.00,106.00)}\pgflineto{\pgfxy(54.00,110.00)}\pgfstroke
\pgfmoveto{\pgfxy(50.00,108.00)}\pgflineto{\pgfxy(52.00,110.00)}\pgfstroke
\pgfmoveto{\pgfxy(52.00,90.00)}\pgflineto{\pgfxy(70.00,108.00)}\pgfstroke
\pgfmoveto{\pgfxy(54.00,90.00)}\pgflineto{\pgfxy(70.00,106.00)}\pgfstroke
\pgfmoveto{\pgfxy(56.00,90.00)}\pgflineto{\pgfxy(70.00,104.00)}\pgfstroke
\pgfmoveto{\pgfxy(58.00,90.00)}\pgflineto{\pgfxy(70.00,102.00)}\pgfstroke
\pgfmoveto{\pgfxy(60.00,90.00)}\pgflineto{\pgfxy(70.00,100.00)}\pgfstroke
\pgfmoveto{\pgfxy(62.00,90.00)}\pgflineto{\pgfxy(70.00,98.00)}\pgfstroke
\pgfmoveto{\pgfxy(64.00,90.00)}\pgflineto{\pgfxy(70.00,96.00)}\pgfstroke
\pgfmoveto{\pgfxy(66.00,90.00)}\pgflineto{\pgfxy(70.00,94.00)}\pgfstroke
\pgfmoveto{\pgfxy(68.00,90.00)}\pgflineto{\pgfxy(70.00,92.00)}\pgfstroke
\pgfputat{\pgfxy(62.00,94.00)}{\pgfbox[bottom,left]{$\tilde{A^\star}$}}
\pgfmoveto{\pgfxy(30.00,110.00)}\pgflineto{\pgfxy(30.00,70.00)}\pgfstroke
\pgfmoveto{\pgfxy(30.00,110.00)}\pgflineto{\pgfxy(29.70,108.80)}\pgflineto{\pgfxy(30.00,109.40)}\pgflineto{\pgfxy(30.30,108.80)}\pgflineto{\pgfxy(30.00,110.00)}\pgfclosepath\pgffill
\pgfmoveto{\pgfxy(30.00,110.00)}\pgflineto{\pgfxy(29.70,108.80)}\pgflineto{\pgfxy(30.00,109.40)}\pgflineto{\pgfxy(30.30,108.80)}\pgflineto{\pgfxy(30.00,110.00)}\pgfclosepath\pgfstroke
\pgfmoveto{\pgfxy(30.00,110.00)}\pgflineto{\pgfxy(30.00,70.00)}\pgfstroke
\pgfmoveto{\pgfxy(30.00,110.00)}\pgflineto{\pgfxy(29.70,108.80)}\pgflineto{\pgfxy(30.00,109.40)}\pgflineto{\pgfxy(30.30,108.80)}\pgflineto{\pgfxy(30.00,110.00)}\pgfclosepath\pgffill
\pgfmoveto{\pgfxy(30.00,110.00)}\pgflineto{\pgfxy(29.70,108.80)}\pgflineto{\pgfxy(30.00,109.40)}\pgflineto{\pgfxy(30.30,108.80)}\pgflineto{\pgfxy(30.00,110.00)}\pgfclosepath\pgfstroke
\pgfmoveto{\pgfxy(30.00,110.00)}\pgflineto{\pgfxy(30.00,70.00)}\pgfstroke
\pgfmoveto{\pgfxy(30.00,110.00)}\pgflineto{\pgfxy(29.70,108.80)}\pgflineto{\pgfxy(30.00,109.40)}\pgflineto{\pgfxy(30.30,108.80)}\pgflineto{\pgfxy(30.00,110.00)}\pgfclosepath\pgffill
\pgfmoveto{\pgfxy(30.00,110.00)}\pgflineto{\pgfxy(29.70,108.80)}\pgflineto{\pgfxy(30.00,109.40)}\pgflineto{\pgfxy(30.30,108.80)}\pgflineto{\pgfxy(30.00,110.00)}\pgfclosepath\pgfstroke
\pgfmoveto{\pgfxy(100.00,110.00)}\pgflineto{\pgfxy(100.00,70.00)}\pgfstroke
\pgfmoveto{\pgfxy(100.00,110.00)}\pgflineto{\pgfxy(99.70,108.80)}\pgflineto{\pgfxy(100.00,109.40)}\pgflineto{\pgfxy(100.30,108.80)}\pgflineto{\pgfxy(100.00,110.00)}\pgfclosepath\pgffill
\pgfmoveto{\pgfxy(100.00,110.00)}\pgflineto{\pgfxy(99.70,108.80)}\pgflineto{\pgfxy(100.00,109.40)}\pgflineto{\pgfxy(100.30,108.80)}\pgflineto{\pgfxy(100.00,110.00)}\pgfclosepath\pgfstroke
\pgfmoveto{\pgfxy(100.00,70.00)}\pgflineto{\pgfxy(140.00,70.00)}\pgfstroke
\pgfmoveto{\pgfxy(140.00,70.00)}\pgflineto{\pgfxy(138.80,70.30)}\pgflineto{\pgfxy(139.40,70.00)}\pgflineto{\pgfxy(138.80,69.70)}\pgflineto{\pgfxy(140.00,70.00)}\pgfclosepath\pgffill
\pgfmoveto{\pgfxy(140.00,70.00)}\pgflineto{\pgfxy(138.80,70.30)}\pgflineto{\pgfxy(139.40,70.00)}\pgflineto{\pgfxy(138.80,69.70)}\pgflineto{\pgfxy(140.00,70.00)}\pgfclosepath\pgfstroke
\pgfsetdash{{0.10mm}{0.50mm}}{0mm}\pgfmoveto{\pgfxy(110.00,110.00)}\pgflineto{\pgfxy(110.00,70.00)}\pgfstroke
\pgfmoveto{\pgfxy(120.00,70.00)}\pgflineto{\pgfxy(120.00,110.00)}\pgfstroke
\pgfmoveto{\pgfxy(130.00,70.00)}\pgflineto{\pgfxy(130.00,110.00)}\pgfstroke
\pgfmoveto{\pgfxy(100.00,100.00)}\pgflineto{\pgfxy(140.00,100.00)}\pgfstroke
\pgfmoveto{\pgfxy(100.00,90.00)}\pgflineto{\pgfxy(140.00,90.00)}\pgfstroke
\pgfmoveto{\pgfxy(100.00,80.00)}\pgflineto{\pgfxy(140.00,80.00)}\pgfstroke
\pgfputat{\pgfxy(139.00,66.00)}{\pgfbox[bottom,left]{$s$}}
\pgfputat{\pgfxy(96.00,108.00)}{\pgfbox[bottom,left]{$t$}}
\pgfsetdash{}{0mm}\pgfmoveto{\pgfxy(100.00,70.00)}\pgflineto{\pgfxy(140.00,110.00)}\pgfstroke
\pgfmoveto{\pgfxy(109.76,105.75)}\pgfcurveto{\pgfxy(100.90,105.28)}{\pgfxy(100.39,85.85)}{\pgfxy(106.06,83.33)}\pgfcurveto{\pgfxy(109.31,81.88)}{\pgfxy(113.06,82.51)}{\pgfxy(116.34,80.76)}\pgfcurveto{\pgfxy(122.48,77.48)}{\pgfxy(129.28,75.74)}{\pgfxy(132.80,82.92)}\pgfcurveto{\pgfxy(136.56,90.57)}{\pgfxy(137.14,99.80)}{\pgfxy(127.25,104.10)}\pgfcurveto{\pgfxy(122.14,106.32)}{\pgfxy(114.86,107.36)}{\pgfxy(109.86,105.96)}\pgfstroke
\pgfmoveto{\pgfxy(110.00,90.00)}\pgflineto{\pgfxy(120.00,100.00)}\pgfstroke
\pgfmoveto{\pgfxy(110.00,92.00)}\pgflineto{\pgfxy(118.00,100.00)}\pgfstroke
\pgfmoveto{\pgfxy(110.00,94.00)}\pgflineto{\pgfxy(116.00,100.00)}\pgfstroke
\pgfmoveto{\pgfxy(110.00,96.00)}\pgflineto{\pgfxy(114.00,100.00)}\pgfstroke
\pgfmoveto{\pgfxy(110.00,98.00)}\pgflineto{\pgfxy(112.00,100.00)}\pgfstroke
\pgfmoveto{\pgfxy(112.00,90.00)}\pgflineto{\pgfxy(120.00,98.00)}\pgfstroke
\pgfmoveto{\pgfxy(114.00,90.00)}\pgflineto{\pgfxy(124.00,100.00)}\pgfstroke
\pgfmoveto{\pgfxy(120.00,98.00)}\pgflineto{\pgfxy(122.00,100.00)}\pgfstroke
\pgfmoveto{\pgfxy(116.00,90.00)}\pgflineto{\pgfxy(126.00,100.00)}\pgfstroke
\pgfmoveto{\pgfxy(118.00,90.00)}\pgflineto{\pgfxy(128.00,100.00)}\pgfstroke
\pgfmoveto{\pgfxy(122.00,90.00)}\pgflineto{\pgfxy(130.00,98.00)}\pgfstroke
\pgfmoveto{\pgfxy(120.00,88.00)}\pgflineto{\pgfxy(122.00,90.00)}\pgfstroke
\pgfmoveto{\pgfxy(120.00,86.00)}\pgflineto{\pgfxy(130.00,96.00)}\pgfstroke
\pgfmoveto{\pgfxy(120.00,84.00)}\pgflineto{\pgfxy(130.00,94.00)}\pgfstroke
\pgfmoveto{\pgfxy(120.00,82.00)}\pgflineto{\pgfxy(130.00,92.00)}\pgfstroke
\pgfmoveto{\pgfxy(120.00,80.00)}\pgflineto{\pgfxy(130.00,90.00)}\pgfstroke
\pgfmoveto{\pgfxy(122.00,80.00)}\pgflineto{\pgfxy(130.00,88.00)}\pgfstroke
\pgfmoveto{\pgfxy(124.00,80.00)}\pgflineto{\pgfxy(130.00,86.00)}\pgfstroke
\pgfmoveto{\pgfxy(126.00,80.00)}\pgflineto{\pgfxy(130.00,84.00)}\pgfstroke
\pgfmoveto{\pgfxy(128.00,80.00)}\pgflineto{\pgfxy(130.00,82.00)}\pgfstroke
\pgfputat{\pgfxy(122.00,84.00)}{\pgfbox[bottom,left]{$\tilde{A^\star}$}}
\pgfputat{\pgfxy(112.00,102.00)}{\pgfbox[bottom,left]{$A^\star$}}
\pgfputat{\pgfxy(46.00,107.00)}{\pgfbox[bottom,left]{$A^\star$}}
\end{pgfpicture}%

\caption{Examples of sets $\tilde{A^\star}$.}
\label{f: TildeAstar}
\end{figure}
More precisely, let $\upgamma$ be as defined in Proposition
\ref{t: upgamma_welldefined} for $h \in \mathcal{H}$. 
\begin{thm}\label{t: approx_disc_cont}
With the notations from Sections \ref{s: MO_Geo}-\ref{s: MO_Geo_cont_nicer_int_fct}, 
we obtain, for a set $A^\star \in \mathcal{B}([-\log n, \infty)^2)$,
\begin{equation}\label{t: approx_disc_cont_eq} 
d_2(\mathrm{PRM}(\bpis), \mathrm{PRM}(\bl^\star))
\le \sqrt{2} \log(1/ p_{00})+ \min\left\{1 ,\, \frac{1.65}{\sqrt{\bl^\star(A^\star)}} \right\} \bl^\star(A^\star \setminus \tilde{A}^\star),
\end{equation}
where $\tilde{A}^\star$ denotes the biggest subset of $A^\star$ of the form \eqref{d: TildeAstar} with $M^\star$ the biggest subset of 
$E^\star$ such that $\tilde{A}^\star \subseteq \As$.\\
\end{thm}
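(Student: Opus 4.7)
The plan is to bound $d_2(\mathrm{PRM}(\bpis),\mathrm{PRM}(\bl^\star))$ via the Stein--Chen method, using the Stein solution $\upgamma$ of Proposition~\ref{t: upgamma_solves_Stein} associated to immigration rate $\bl^\star$. Fix $h\in\mathcal{H}$ and take $\Xi_1\sim\mathrm{PRM}(\bpis)$. The Stein equation reduces $|\mathbb{E}h(\Xi_1)-\mathrm{PRM}(\bl^\star)(h)|$ to $|\mathbb{E}(\mathcal{A}\upgamma)(\Xi_1)|$. Applying the Palm identity \eqref{d: Palm_equality}, under which the Palm distribution of a Poisson process at $z$ has the law of the process plus $\delta_z$, the immigration and death parts of $\mathcal{A}\upgamma$ collapse into
\begin{equation*}
\mathbb{E}(\mathcal{A}\upgamma)(\Xi_1)=\int_{A^\star} f(z)\,(\bl^\star-\bpis)(\De z),\qquad f(z):=\mathbb{E}\bigl[\upgamma(\Xi_1+\delta_z)-\upgamma(\Xi_1)\bigr].
\end{equation*}

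First I would split the integral according to $A^\star=\tilde A^\star\sqcup(A^\star\setminus\tilde A^\star)$. On $\tilde A^\star=\bigsqcup_{M^\star}\Rstar$, Proposition~\ref{t: MO_Geo_same_meas_rectangles}(i) gives $\bl^\star(\Rstar)=\bpis(\Rstar)$, and $\bpis$ restricted to each rectangle is a single atom at the corner $(k^\star,l^\star)$. Subtracting $f(k^\star,l^\star)$ times the common mass therefore leaves
\begin{equation*}
\int_{\Rstar} f\,\De(\bl^\star-\bpis)=\int_{\Rstar}\bigl(f(z)-f(k^\star,l^\star)\bigr)\,\bl^\star(\De z).
\end{equation*}
Invoking the sharp Lipschitz-type estimate of the Stein solution that underlies Xia's proof of Proposition~\ref{t: d2_two_PRM}, namely $|f(z_1)-f(z_2)|\le s_2(h)\bigl(1-\mathrm e^{-\bl^\star(A^\star)}\bigr)\bl^\star(A^\star)^{-1}\,d_0(z_1,z_2)$, together with the fact that each $\Rstar$ has Euclidean diameter $\sqrt 2\log(1/p_{00})$ and $d_0$ is the truncated Euclidean metric, summing over $M^\star$ and using $\bl^\star(\tilde A^\star)\le\bl^\star(A^\star)$ collapses the $\tilde A^\star$-contribution to at most $s_2(h)\sqrt 2\log(1/p_{00})$.

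For the complementary integral on $A^\star\setminus\tilde A^\star$ I would use only the coarser pointwise estimate $|f|\le\Delta_1\upgamma\le s_2(h)\min\{1,1.65/\sqrt{\bl^\star(A^\star)}\}$ from Lemma~\ref{t: Delta_1_upgamma_d2}. The atomic $\bpis$-mass on this boundary region is absorbed by matching each boundary lattice point to the $\bl^\star$-mass of its straddling rectangle, again via Proposition~\ref{t: MO_Geo_same_meas_rectangles}(i), leaving an effective total mass of $\bl^\star(A^\star\setminus\tilde A^\star)$ and yielding the bound $s_2(h)\min\{1,1.65/\sqrt{\bl^\star(A^\star)}\}\,\bl^\star(A^\star\setminus\tilde A^\star)$. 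Summing the two contributions, dividing by $s_2(h)$, and taking the supremum over $h\in\mathcal{H}$ delivers \eqref{t: approx_disc_cont_eq}.

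The hard part will be justifying the refined second-order smoothness $|f(z_1)-f(z_2)|\le s_2(h)(1-\mathrm e^{-\lambda})/\lambda\,d_0(z_1,z_2)$: it is precisely the $(1-\mathrm e^{-\lambda})/\lambda$ prefactor that makes the rectangle contributions in $\tilde A^\star$ telescope to the dimensionless constant $\sqrt 2\log(1/p_{00})$ instead of a quantity growing linearly with $\bl^\star(\tilde A^\star)$. This estimate is the engine behind Proposition~\ref{t: d2_two_PRM} and rests on the classical coupling analysis of the immigration--death process generating $\upgamma$, quantifying how fast two copies started from $\xi+\delta_{z_1}$ and $\xi+\delta_{z_2}$ merge, as carried out by~\cite{Xia:1995}.
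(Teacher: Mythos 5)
Your proposal is correct and follows essentially the same route as the paper's proof: both reduce to $\mathbb{E}(\mathcal{A}\upgamma)(\Xi)=\int_{A^\star}[\upgamma(\Xi+\delta_{\bz})-\upgamma(\Xi)]\,(\bl^\star-\bpis)(\De\bz)$, split over $\tilde A^\star$ and $A^\star\setminus\tilde A^\star$, exploit $\bl^\star(\Rstar)=\bpis(\Rstar)$ and the $\sqrt2\log(1/p_{00})$ rectangle diameter on $\tilde A^\star$, and use $\Delta_1\upgamma$ from Lemma~\ref{t: Delta_1_upgamma_d2} on the remainder. The only difference is presentational — the paper channels the $\tilde A^\star$ contribution through the $d_1$-distance and cites Xia's Theorem 1.2, whereas you unwrap that citation into the explicit Lipschitz estimate on $z\mapsto\mathbb{E}[\upgamma(\Xi+\delta_z)-\upgamma(\Xi)]$, which is indeed the same external input.
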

\begin{proof}
Let $\Xi_{\bpis} \sim \mathrm{PRM}(\bpis)$ and $\Xi_{\bls} \sim \mathrm{PRM}(\bls)$. We recall the standard techniques also employed in~\citet{Xia:1995}. Suppose that $Z= \{Z_t, t\in \mathbb{R}_+\}$ is an immigration-death process on $A^\star$ with 
immigration intensity $\bls$, unit per-capita death rate, equilibrium distribution $\mathcal{L}(\Xi_{\bls})$, and generator $\mathcal{A}$.
Furthermore, let $\mathcal{H}$ denote the set of functions $h:\, M_p(A^\star) \to \mathbb{R}$ such that (\ref{d: s_2h}) is satisfied
and let $\upgamma:\, M_p(A^\star) \to \mathbb{R}$ be defined by $\upgamma(\xi)$ $= $
$-\int_0^\infty \{\mathbb{E}^\xi h(Z_t) - \mathrm{PRM}(\bls)\}\De t$, for any $\xi \in M_p(A^\star)$. By Proposition \ref{t: upgamma_welldefined}, $\upgamma$ is well-defined, 
and by (\ref{d: Stein_eq_d2}), $|\mathrm{PRM}(\bpis)(h)- \mathrm{PRM}(\bls)(h)|$ equals $|\mathbb{E}(\mathcal{A}\upgamma)(\Xi_{\bpis})|$. 
Proceeding as in the proof of 
Theorem 1.2 in \cite{Xia:1995}, we find
\[
\mathbb{E}(\mathcal{A}\upgamma)(\Xi_{\bpis})= \mathbb{E} \int_{A^\star} \left[ \upgamma(\Xi_{\bpis} + \delta_{\mathbf{z}}) - \upgamma(\Xi_{\bpis})\right]
\left( \bls(d\bz) - \bpis(d\bz)\right), 
\]
and thus 
\begin{align}
\frac{|\mathbb{E}(\mathcal{A}\upgamma)(\Xi_{\bpis})|}{s_2(h)} &\le
\frac{1}{s_2(h)}\, \mathbb{E} \left| \int_{\tilde{A^\star}} \left[ \upgamma(\Xi_{\bpis} + \delta_{\bz}) - \upgamma(\Xi_{\bpis})\right]
\left( \bls(d\bz) - \bpis(d\bz)\right)\right| \nonumber\\
& \quad + \frac{1}{s_2(h)}\, \mathbb{E} \left| \int_{A^\star \smallsetminus \tilde{A^\star}} \left[ \upgamma(\Xi_{\bpis} + \delta_{\bz}) - \upgamma(\Xi_{\bpis})\right]
\left( \bls(d\bz) - \bpis(d\bz)\right)\right|. \label{p: Xia}
\end{align}
Again by the proof of Theorem 1.2 in \cite{Xia:1995}, the first summand is bounded by   
\[
\left( 1 - e^{-\bl(\tilde{A}^\star)} \right) d_1(\bpis, \bls)) |_{\tilde{A^\star}} 
+ \left|\bl^\star(\tilde{A}^\star) - \bpis(\tilde{A}^\star)\right| 
\min\left\{1 ,\, \frac{1.65}{\sqrt{\bl^\star(\tilde{A^\star})}} \right\}
\le d_1(\bpis, \bls)) |_{\tilde{A^\star}},
\]
where $d_1(.,.)|_{\tilde{A}^\star}$ denotes the $d_1$-distance on $\tilde{A}^\star$ (instead of on $A^\star$) and where we used $\bl^\star(\tilde{A}^\star) = \bpis(\tilde{A}^\star) \ge 0$ for the last inequality. The $d_1$-distance between $\bls$  and $\bpis$ on ${\tilde{A}}^\star$ 
is given by
\[
 d_1(\bpis,\bls)|_{\tilde{A}^\star} = \frac{1}{\bls(\tilde{A}^\star)} \,
\sup_{\kappa \in \mathcal{K}} \frac{\left| 
\int_{\tilde{A}^\star}\kappa d\bpis-\int_{\tilde{A}^\star} \kappa d\bls \right|}{s_1(\kappa)}\,.
\]
As $\tilde{A}^\star$ is a union of coordinate rectangles $\Rstar$, we may express 
$\int_{\tilde{A}^\star}\kappa d\bpis-\int_{\tilde{A}^\star} \kappa d\bls $ as
\begin{equation} \label{p: d1diff}
  \sum_{(k^\star,l^\star) \in \tilde{A}^\star} \left\{ \int_{\Rstar} \kappa(\bz)\bpis(d\bz) 
 -\int_{\Rstar}\kappa(\bz)\bls(d\bz) \right\}.
\end{equation}
Furthermore, again by Proposition \ref{t: MO_Geo_same_meas_rectangles},
\[
 \int_{\Rstar}\kappa(\bz)\bpis(d\bz)  = \kappa((k^\star,l^\star)) \bpis(\Rstar) = \kappa((k^\star,l^\star)) \bls(\Rstar).
\]
Hence, we find the following upper bound for (\ref{p: d1diff}):
\[
 \sum_{(k^\star,l^\star) \in \tilde{A}^\star}  \int_{\Rstar} \left|\kappa((k^\star, l^\star)) -\kappa(\bz)\right|\bls(d\bz)
 \le  s_1(\kappa)d_0((k^\star,l^\star),\bz)\bls(\tilde{A}^\star),
 \]
where we also used the definition of the Lipschitz constant $s_1(k)$. 
Since the biggest possible Euclidean distance between the lower left corner point $(k^\star, l^\star)$ and any other point $\bz$ in the 
rectangle $\Rstar$ is given by the length $\sqrt{2}\log(1/ p_{00})$ of its diagonal, we have 
\begin{equation} \label{p: diaglength}
d_1(\bpis,\bls)|_{\tilde{A}^\star} \le \sqrt{2}\log (1/p_{00}).
\end{equation}
For the second summand in (\ref{p: Xia}), we note that, for any $\xi \in M_p(A^\star)$,
\begin{align} 
  \left|\int_{\As\smallsetminus\tilde{A}^\star} 
[\upgamma(\xi+\delta_{\bz})-\upgamma(\xi)](\bls(d\bz)-\bpis(d\bz))\right|
 & \le  \int_{\As \smallsetminus \tilde{A}^\star} |\upgamma(\xi+\delta_{\bz})-\upgamma(\xi)|\cdot 
 |\bls (d\bz)-\bpis(d\bz)| \nonumber \\ 
 &\le \Delta_1\upgamma \int_{\As \smallsetminus \tilde{A}^\star}|\bls(d\bz)-\bpis(d\bz)| \le \Delta_1\upgamma \cdot \bls(\As \setminus \tilde{A}^\star) \label{p: summand2}
\end{align}
and, completing the proof, that Lemma \ref{t: Delta_1_upgamma_d2} gives 
\begin{equation} \label{p: deltaone}
 \Delta_1\upgamma \le s_2(h)\min\left\{ 1 ,\, \frac{1.65}{\sqrt{\bls(\As)}} \right\}.
 \end{equation}
\end{proof}

\noindent Theorem \ref{t: approx_disc_cont} gives sharp results only if the probability of simultaneous failure, $p_{00} = p_{00n}$ tends to $1$ as $n \to \infty$. This makes sense since $\log(1/p_{00})$, introduced as scaling factor of the original marginal geometric random variables, provides the side lengths of the rescaled lattice squares. The condition $p_{00n} \uparrow 1$  makes the side lengths of the coordinate squares tend to $0$ and thus causes the ``disappearance" of the lattice into the whole real subset $[-\log n, \infty)^2$. The same holds for the area $\As \setminus \tilde{\As}$, thereby also causing the disappearance of the second error term as $n \to \infty$.

For sets $\As$ that are unions of coordinate rectangles, there is no left-over area $\As \setminus \tilde{\As}$, and by consequence no second error term. 
\begin{figure}
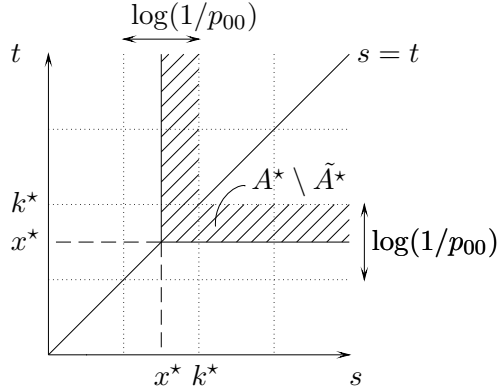

\centering
\begin{pgfpicture}{25.00mm}{65.39mm}{96.85mm}{120.00mm}
\pgfsetxvec{\pgfpoint{1.00mm}{0mm}}
\pgfsetyvec{\pgfpoint{0mm}{1.00mm}}
\color[rgb]{0,0,0}\pgfsetlinewidth{0.10mm}\pgfsetdash{}{0mm}
\pgfmoveto{\pgfxy(30.00,110.00)}\pgflineto{\pgfxy(30.00,70.00)}\pgfstroke
\pgfmoveto{\pgfxy(30.00,110.00)}\pgflineto{\pgfxy(29.70,108.80)}\pgflineto{\pgfxy(30.00,109.40)}\pgflineto{\pgfxy(30.30,108.80)}\pgflineto{\pgfxy(30.00,110.00)}\pgfclosepath\pgffill
\pgfmoveto{\pgfxy(30.00,110.00)}\pgflineto{\pgfxy(29.70,108.80)}\pgflineto{\pgfxy(30.00,109.40)}\pgflineto{\pgfxy(30.30,108.80)}\pgflineto{\pgfxy(30.00,110.00)}\pgfclosepath\pgfstroke
\pgfsetdash{{0.10mm}{0.50mm}}{0mm}\pgfmoveto{\pgfxy(40.00,110.00)}\pgflineto{\pgfxy(40.00,70.00)}\pgfstroke
\pgfmoveto{\pgfxy(50.00,110.00)}\pgflineto{\pgfxy(50.00,70.00)}\pgfstroke
\pgfmoveto{\pgfxy(60.00,110.00)}\pgflineto{\pgfxy(60.00,70.00)}\pgfstroke
\pgfmoveto{\pgfxy(30.00,80.00)}\pgflineto{\pgfxy(70.00,80.00)}\pgfstroke
\pgfmoveto{\pgfxy(30.00,90.00)}\pgflineto{\pgfxy(70.00,90.00)}\pgfstroke
\pgfmoveto{\pgfxy(30.00,100.00)}\pgflineto{\pgfxy(70.00,100.00)}\pgfstroke
\pgfsetdash{}{0mm}\pgfmoveto{\pgfxy(30.00,70.00)}\pgflineto{\pgfxy(70.00,110.00)}\pgfstroke
\pgfsetlinewidth{0.30mm}\pgfmoveto{\pgfxy(40.00,120.00)}\pgflineto{\pgfxy(40.00,120.00)}\pgfstroke
\pgfsetlinewidth{0.10mm}\pgfmoveto{\pgfxy(35.00,70.00)}\pgflineto{\pgfxy(35.00,70.00)}\pgfstroke
\pgfputat{\pgfxy(71.00,109.00)}{\pgfbox[bottom,left]{$s=t$}}
\pgfmoveto{\pgfxy(30.00,70.00)}\pgflineto{\pgfxy(70.00,70.00)}\pgfstroke
\pgfmoveto{\pgfxy(70.00,70.00)}\pgflineto{\pgfxy(68.80,70.30)}\pgflineto{\pgfxy(69.40,70.00)}\pgflineto{\pgfxy(68.80,69.70)}\pgflineto{\pgfxy(70.00,70.00)}\pgfclosepath\pgffill
\pgfmoveto{\pgfxy(70.00,70.00)}\pgflineto{\pgfxy(68.80,70.30)}\pgflineto{\pgfxy(69.40,70.00)}\pgflineto{\pgfxy(68.80,69.70)}\pgflineto{\pgfxy(70.00,70.00)}\pgfclosepath\pgfstroke
\pgfmoveto{\pgfxy(45.00,110.00)}\pgflineto{\pgfxy(45.00,85.00)}\pgfstroke
\pgfmoveto{\pgfxy(45.00,85.00)}\pgflineto{\pgfxy(70.00,85.00)}\pgfstroke
\pgfmoveto{\pgfxy(45.00,87.00)}\pgflineto{\pgfxy(50.00,92.00)}\pgfstroke
\pgfmoveto{\pgfxy(45.00,89.00)}\pgflineto{\pgfxy(50.00,94.00)}\pgfstroke
\pgfmoveto{\pgfxy(45.00,91.00)}\pgflineto{\pgfxy(50.00,96.00)}\pgfstroke
\pgfmoveto{\pgfxy(45.00,93.00)}\pgflineto{\pgfxy(50.00,98.00)}\pgfstroke
\pgfmoveto{\pgfxy(49.00,85.00)}\pgflineto{\pgfxy(54.00,90.00)}\pgfstroke
\pgfmoveto{\pgfxy(47.00,85.00)}\pgflineto{\pgfxy(52.00,90.00)}\pgfstroke
\pgfmoveto{\pgfxy(51.00,85.00)}\pgflineto{\pgfxy(56.00,90.00)}\pgfstroke
\pgfmoveto{\pgfxy(53.00,85.00)}\pgflineto{\pgfxy(58.00,90.00)}\pgfstroke
\pgfmoveto{\pgfxy(55.00,85.00)}\pgflineto{\pgfxy(60.00,90.00)}\pgfstroke
\pgfmoveto{\pgfxy(57.00,85.00)}\pgflineto{\pgfxy(62.00,90.00)}\pgfstroke
\pgfmoveto{\pgfxy(59.00,85.00)}\pgflineto{\pgfxy(64.00,90.00)}\pgfstroke
\pgfmoveto{\pgfxy(61.00,85.00)}\pgflineto{\pgfxy(66.00,90.00)}\pgfstroke
\pgfmoveto{\pgfxy(63.00,85.00)}\pgflineto{\pgfxy(68.00,90.00)}\pgfstroke
\pgfmoveto{\pgfxy(65.00,85.00)}\pgflineto{\pgfxy(70.00,90.00)}\pgfstroke
\pgfmoveto{\pgfxy(67.00,85.00)}\pgflineto{\pgfxy(70.00,88.00)}\pgfstroke
\pgfmoveto{\pgfxy(69.00,85.00)}\pgflineto{\pgfxy(70.00,86.00)}\pgfstroke
\pgfmoveto{\pgfxy(45.00,95.00)}\pgflineto{\pgfxy(50.00,100.00)}\pgfstroke
\pgfmoveto{\pgfxy(45.00,97.00)}\pgflineto{\pgfxy(50.00,102.00)}\pgfstroke
\pgfmoveto{\pgfxy(45.00,99.00)}\pgflineto{\pgfxy(50.00,104.00)}\pgfstroke
\pgfmoveto{\pgfxy(45.00,101.00)}\pgflineto{\pgfxy(50.00,106.00)}\pgfstroke
\pgfmoveto{\pgfxy(45.00,103.00)}\pgflineto{\pgfxy(50.00,108.00)}\pgfstroke
\pgfmoveto{\pgfxy(45.00,105.00)}\pgflineto{\pgfxy(50.00,110.00)}\pgfstroke
\pgfmoveto{\pgfxy(45.00,107.00)}\pgflineto{\pgfxy(48.00,110.00)}\pgfstroke
\pgfmoveto{\pgfxy(45.00,109.00)}\pgflineto{\pgfxy(46.00,110.00)}\pgfstroke
\pgfmoveto{\pgfxy(72.00,90.00)}\pgflineto{\pgfxy(72.00,80.00)}\pgfstroke
\pgfmoveto{\pgfxy(72.00,90.00)}\pgflineto{\pgfxy(71.70,88.80)}\pgflineto{\pgfxy(72.00,89.40)}\pgflineto{\pgfxy(72.30,88.80)}\pgflineto{\pgfxy(72.00,90.00)}\pgfclosepath\pgffill
\pgfmoveto{\pgfxy(72.00,90.00)}\pgflineto{\pgfxy(71.70,88.80)}\pgflineto{\pgfxy(72.00,89.40)}\pgflineto{\pgfxy(72.30,88.80)}\pgflineto{\pgfxy(72.00,90.00)}\pgfclosepath\pgfstroke
\pgfmoveto{\pgfxy(72.00,80.00)}\pgflineto{\pgfxy(72.30,81.20)}\pgflineto{\pgfxy(72.00,80.60)}\pgflineto{\pgfxy(71.70,81.20)}\pgflineto{\pgfxy(72.00,80.00)}\pgfclosepath\pgffill
\pgfmoveto{\pgfxy(72.00,80.00)}\pgflineto{\pgfxy(72.30,81.20)}\pgflineto{\pgfxy(72.00,80.60)}\pgflineto{\pgfxy(71.70,81.20)}\pgflineto{\pgfxy(72.00,80.00)}\pgfclosepath\pgfstroke
\pgfmoveto{\pgfxy(40.00,112.00)}\pgflineto{\pgfxy(50.00,112.00)}\pgfstroke
\pgfmoveto{\pgfxy(40.00,112.00)}\pgflineto{\pgfxy(41.20,111.70)}\pgflineto{\pgfxy(40.60,112.00)}\pgflineto{\pgfxy(41.20,112.30)}\pgflineto{\pgfxy(40.00,112.00)}\pgfclosepath\pgffill
\pgfmoveto{\pgfxy(40.00,112.00)}\pgflineto{\pgfxy(41.20,111.70)}\pgflineto{\pgfxy(40.60,112.00)}\pgflineto{\pgfxy(41.20,112.30)}\pgflineto{\pgfxy(40.00,112.00)}\pgfclosepath\pgfstroke
\pgfmoveto{\pgfxy(50.00,112.00)}\pgflineto{\pgfxy(48.80,112.30)}\pgflineto{\pgfxy(49.40,112.00)}\pgflineto{\pgfxy(48.80,111.70)}\pgflineto{\pgfxy(50.00,112.00)}\pgfclosepath\pgffill
\pgfmoveto{\pgfxy(50.00,112.00)}\pgflineto{\pgfxy(48.80,112.30)}\pgflineto{\pgfxy(49.40,112.00)}\pgflineto{\pgfxy(48.80,111.70)}\pgflineto{\pgfxy(50.00,112.00)}\pgfclosepath\pgfstroke
\pgfputat{\pgfxy(73.00,84.00)}{\pgfbox[bottom,left]{$\log(1/p_{00})$}}
\pgfputat{\pgfxy(73.00,84.00)}{\pgfbox[bottom,left]{$\log(1/p_{00})$}}
\pgfputat{\pgfxy(41.00,114.00)}{\pgfbox[bottom,left]{$\log (1/p_{00})$}}
\pgfputat{\pgfxy(70.00,66.00)}{\pgfbox[bottom,left]{$s$}}
\pgfputat{\pgfxy(25.00,109.00)}{\pgfbox[bottom,left]{$t$}}
\pgfsetdash{{2.00mm}{1.00mm}}{0mm}\pgfmoveto{\pgfxy(45.00,85.00)}\pgflineto{\pgfxy(45.00,70.00)}\pgfstroke
\pgfmoveto{\pgfxy(45.00,85.00)}\pgflineto{\pgfxy(30.00,85.00)}\pgfstroke
\pgfputat{\pgfxy(44.00,66.00)}{\pgfbox[bottom,left]{$x^\star$}}
\pgfputat{\pgfxy(49.00,66.00)}{\pgfbox[bottom,left]{$k^\star$}}
\pgfputat{\pgfxy(25.00,84.00)}{\pgfbox[bottom,left]{$x^\star$}}
\pgfputat{\pgfxy(25.00,89.00)}{\pgfbox[bottom,left]{$k^\star$}}
\pgfputat{\pgfxy(57.00,92.00)}{\pgfbox[bottom,left]{$A^\star \setminus \tilde{A^\star}$}}
\pgfmoveto{\pgfxy(52.00,87.00)}\pgfstroke
\pgfsetdash{}{0mm}\pgfmoveto{\pgfxy(52.21,87.07)}\pgfcurveto{\pgfxy(51.99,90.46)}{\pgfxy(54.72,92.68)}{\pgfxy(56.12,92.52)}\pgfcurveto{\pgfxy(56.12,92.50)}{\pgfxy(56.12,92.45)}{\pgfxy(56.12,92.42)}\pgfstroke
\end{pgfpicture}%

\caption{The set $\As \setminus \tilde{\As}$.}
\label{f: proof_cor_d2_discr_to_cont}
\end{figure}
We now apply Theorem \ref{t: approx_disc_cont} to the case where $\As =\As_n = [u_n^\star, \infty)^2$ and express the error estimate in terms of the threshold $u_n^\star$ and the probability of simultaneous success $p_{11n}$. 
To achieve this we assume that the distributional parameters $p_{00}, q_{1}$ and $q_2$ are defined as in Section \ref{s: MO_Geo_cont_nicer_int_fct}.
\begin{prop}\label{t: MO_Geo_disc_to_cont_appl}
Let $p_{11n} \in (0,1)$ and assume that $q_{1n}$, $q_{2n}$ and $p_{00n}$ satisfy (\ref{d: conditions}). For any choice of $u_n^\star \ge -\log n$, 
define $A^\star = [u_n^\star,\infty)^2$.  
With the notations from Theorem \ref{t: approx_disc_cont},
\[
d_2(\mathrm{PRM}(\bpis), \mathrm{PRM}(\bl^\star))
\le \frac{(\gade)p_{11n}}{[1-(\gade)p_{11n}]^2}\left\{\sqrt{2} + 3\min\left\{\mathrm{e}^{-u_n^\star} ,\, 1.65 \mathrm{e}^{-u_n^\star/2} \right\}\right\}.
\]
\end{prop}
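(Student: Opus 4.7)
The strategy is to invoke Theorem~\ref{t: approx_disc_cont} with $A^\star=[u_n^\star,\infty)^2$ and estimate the three resulting ingredients $\log(1/p_{00n})$, $\bl^\star(A^\star)$ and $\bl^\star(A^\star\setminus\tilde A^\star)$. The first is immediate: the parametrisation (\ref{d: conditions}) gives $p_{00n}=1-(\gade)p_{11n}$, so the elementary bound $-\log(1-z)\le z/(1-z)$ for $z\in(0,1)$ yields
\[
\log(1/p_{00n})\le\frac{(\gade)p_{11n}}{1-(\gade)p_{11n}}\le\frac{(\gade)p_{11n}}{[1-(\gade)p_{11n}]^2}.
\]

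The heart of the argument is to compute $\bl^\star(A^\star)$ in closed form. Writing $\alpha_i=\log q_{in}/\log p_{00n}$ and $\beta_i=\log(p_{00n}/q_{in})/\log p_{00n}$, which satisfy $\alpha_i,\beta_i\ge 0$ and $\alpha_i+\beta_i=1$, the piecewise formulas (\ref{d: ls}) read $\lambda^\star(s,t)=\alpha_2\beta_2\mathrm{e}^{-\beta_2 s-\alpha_2 t}$ on $\{s<t\}$, $\lambda^\star(s,t)=\alpha_1\beta_1\mathrm{e}^{-\alpha_1 s-\beta_1 t}$ on $\{s>t\}$, and $\acute\lambda^\star(s)=(\alpha_1+\alpha_2-1)\mathrm{e}^{-s}$. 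Elementary integration of each piece over $A^\star$, each time collapsing an exponent via $\alpha_i+\beta_i=1$, produces respective contributions $\beta_2\mathrm{e}^{-u_n^\star}$, $\beta_1\mathrm{e}^{-u_n^\star}$ and $(\alpha_1+\alpha_2-1)\mathrm{e}^{-u_n^\star}$, whose sum telescopes to $\bl^\star(A^\star)=\mathrm{e}^{-u_n^\star}$.

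The remaining ingredient then follows by subtraction. With $k_0^\star$ the smallest lattice point in $E^\star$ at or above $u_n^\star$, the set $\tilde A^\star=[k_0^\star,\infty)^2$ is a countable disjoint union of coordinate rectangles $\Rstar$. Proposition~\ref{t: MO_Geo_same_meas_rectangles}(i) and countable additivity give $\bl^\star(\tilde A^\star)=\bpis(\tilde A^\star)$, and since every lattice point in $A^\star$ already lies in $\tilde A^\star$ we have $\bpis(\tilde A^\star)=\bpis(A^\star)=np_{00n}^{\lceil(u_n^\star+\log n)/\log(1/p_{00n})\rceil}\ge p_{00n}\mathrm{e}^{-u_n^\star}$. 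Subtracting yields
\[
\bl^\star(A^\star\setminus\tilde A^\star)\le(1-p_{00n})\mathrm{e}^{-u_n^\star}=(\gade)p_{11n}\mathrm{e}^{-u_n^\star}.
\]
Substituting $\bl^\star(A^\star)=\mathrm{e}^{-u_n^\star}$ into the $\min$-factor of Theorem~\ref{t: approx_disc_cont} bounds its second term by $(\gade)p_{11n}\min\{\mathrm{e}^{-u_n^\star},1.65\mathrm{e}^{-u_n^\star/2}\}$; combining with the estimate on $\log(1/p_{00n})$ and factoring out $(\gade)p_{11n}/[1-(\gade)p_{11n}]^2$ (using $[1-(\gade)p_{11n}]^2\le 1$ on the second term to promote its denominator) delivers the claimed bound, in fact with a sharper constant $1$ in place of $3$ in front of the $\min$-term.

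\textbf{Main obstacle.} The principal technical difficulty is the clean identity $\bl^\star(A^\star)=\mathrm{e}^{-u_n^\star}$: the two off-diagonal pieces of $\lambda^\star$ and the singular diagonal piece $\acute\lambda^\star$ must be matched precisely, and their telescoping into a single exponential relies entirely on $\alpha_i+\beta_i=1$. Without this observation one falls back on the cruder L-shape estimate $\bl^\star(A^\star\setminus\tilde A^\star)\le 2\log(1/p_{00n})\mathrm{e}^{-u_n^\star}$, obtained by a width-times-height bound on the two strips that make up $A^\star\setminus\tilde A^\star$ using the piecewise density; this still proves the proposition but absorbs an additional $\log(1/p_{00n})$ factor, which accounts for the slack in the constant appearing in the statement.
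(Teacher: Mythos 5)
Your proposal is correct and in fact yields a sharper constant ($1$ in place of $3$ in front of the $\min$-term), but it departs from the paper's proof at the decisive step. Both arguments start identically: apply Theorem \ref{t: approx_disc_cont} with $\As=[u_n^\star,\infty)^2$, bound $\sqrt{2}\log(1/p_{00n})$ via $-\log(1-z)\le z/(1-z)$, and use $\bls(\As)=\mathrm{e}^{-u_n^\star}$ (which the paper asserts as a ``direct computation'' and you verify by the telescoping $\alpha_i+\beta_i=1$). The difference is in estimating $\bls(\As\setminus\tilde{A}^\star)$. The paper integrates pointwise suprema of $\ls$ and $\acute{\lambda}^\star$ over the two strips of width $\log(1/p_{00n})$ and the diagonal segment making up $\As\setminus\tilde{A}^\star$, obtaining $\mathrm{e}^{-u_n^\star}\{\log(q_2/p_{00})+\log(q_1/p_{00})+2\log(1/p_{00})\log(p_{00}/q_1q_2)\}$ and then invoking Lemma \ref{t: Lemma_cond} (iii)--(v) to reach the constant $3$. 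You instead exploit mass conservation: since $\tilde{A}^\star=[k_0^\star,\infty)^2$ is a disjoint union of coordinate squares, Proposition \ref{t: MO_Geo_same_meas_rectangles}(i) gives $\bls(\tilde{A}^\star)=\bpis(\tilde{A}^\star)=\bpis(\As)=np_{00n}^{\lceil(u_n^\star+\log n)/\log(1/p_{00n})\rceil}\ge p_{00n}\mathrm{e}^{-u_n^\star}$, whence $\bls(\As\setminus\tilde{A}^\star)\le(1-p_{00n})\mathrm{e}^{-u_n^\star}=(\gade)p_{11n}\mathrm{e}^{-u_n^\star}$ by subtraction, with no need for Lemma \ref{t: Lemma_cond} at all. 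Your route is cleaner and quantitatively better for this specific $\As$; the paper's direct integration of the L-shaped region is more robust in that it does not rely on knowing $\bls(\As)$ and $\bpis(\As)$ exactly, and it is the template reused in Proposition \ref{t: MO_Geo_cont_to_nicer_appl}, but for the present statement both arguments are sound.
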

\begin{proof}
For ease of notation we omit the subscript $n$. We apply result (\ref{t: approx_disc_cont_eq}) from Theorem \ref{t: approx_disc_cont} to the special case 
$A^\star = [u^\star, \infty)^2$. Due to (\ref{d: conditions}) and $-\log(1-z) \le z/(1-z)$ for $|z| < 1$, we may bound the first of the two error terms in 
(\ref{t: approx_disc_cont_eq}) as follows:
\begin{equation}\label{p: MO_Geo_bubulala}
\sqrt{2}\log(1/p_{00}) \le \frac{\sqrt{2}(\gade)p_{11}}{1-(\gade)p_{11}} \le \frac{\sqrt{2}(\gade)p_{11}}{[1-(\gade)p_{11}]^2}\,.
\end{equation}
Direct computation yields $\bls(\As)= \mathrm{e}^{-u^\star}$. As illustrated by Figure \ref{f: proof_cor_d2_discr_to_cont}, 
$\bls(\As \setminus \tilde{\As})$ may be bounded by
\begin{multline*}
\int_{u^\star}^{\infty} \log (1/p_{00}) \sup_{s \in [u^\star,k^\star]} \ls(s,t)\De t \\
+ \int_{u^\star}^{\infty} \log (1/p_{00}) \sup_{t \in [u^\star,k^\star]} \ls(s,t)\De s
+ \int_{u^\star}^{k^\star}\sqrt{2} \log(1/p_{00}) \sup_{s \in [u^\star,k^\star]} \acute{\lambda}^\star(s) \De s.
\end{multline*}
Note that 
\begin{align*}
\sup_{s \in [u^\star,k^\star]} \exp{\left\{-\frac{\log(p_{00}/q_2)}{\log p_{00}}\,s\right\}} &\le \exp{\left\{-\frac{\log(p_{00}/q_2)}{\log p_{00}}\,u^\star\right\}},\\
\sup_{t \in [u^\star,k^\star]} \exp{\left\{-\frac{\log(p_{00}/q_1)}{\log p_{00}}\,t\right\}} &\le \exp{\left\{-\frac{\log(p_{00}/q_1)}{\log p_{00}}\,u^\star\right\}},
\end{align*}
and $\sup_{s \in [u^\star,k^\star]} \mathrm{e}^{-s} \le \mathrm{e}^{-u^\star}.$ Thus, by definition (\ref{d: ls}) of $\ls(s,t)$,
\[
\int_{u^\star}^{\infty} \log (1/p_{00}) \sup_{s \in [u^\star,k^\star]} \ls(s,t)\De t 
\le \frac{\log(p_{00}/q_2) \log q_2}{\log(1/p_{00})}\, \mathrm{e}^{-\frac{\log(p_{00}/q_2)}{\log p_{00}}\,u^\star} \int_{u^\star}^{\infty} \mathrm{e}^{-\frac{\log q_2}{\log p_{00}}\,t}\De t,
\]
which equals $\log(q_2/p_{00})\mathrm{e}^{-u^\star}$. Analogously,
\[
\int_{u^\star}^{\infty} \log (1/p_{00}) \sup_{t \in [u^\star,k^\star]} \ls(s,t)\De s \le \log(q_1/p_{00})\mathrm{e}^{-u^\star},
\]
whereas 
\[
\int_{u^\star}^{k^\star} \sqrt{2}\log(1/p_{00}) \sup_{s \in [u^\star,k^\star]} \acute{\lambda}^\star(s) \De s 
 \le  2\log^2(1/p_{00})  \,\frac{\log(p_{00}/q_1q_2)}{\log (1/p_{00})} \,\mathrm{e}^{-u^\star},
\]
since $k^\star - u^\star \le \sqrt{2}\log(1/p_{00})$. We obtain
\[
\bls(\As \setminus \tilde{\As}) 
 = \mathrm{e}^{-u^\star} \left\{ \log \left(\frac{q_2}{p_{00}}\right) + \log \left(\frac{q_1}{p_{00}}\right) +  2\log \left( \frac{1}{p_{00}}\right)
\log\left( \frac{p_{00}}{q_1 q_2}\right)\right\}.
\]
By Lemma \ref{t: Lemma_cond} (iii)-(v), the term in curly brackets may be bounded by
\[
\frac{(\gamma + \delta)p_{11}}{1-(\gade)p_{11}} + \frac{2(\gade)p_{11}}{\left\{ 1-(\gade)p_{11}\right\}^2} \le  \frac{3(\gade)p_{11}}{\left\{ 1-(\gade)p_{11}\right\}^2}\,.
\]
An upper bound for the second error term in (\ref{t: approx_disc_cont_eq}) is thus given by
\[
\min\left\{\mathrm{e}^{-u_n^\star} ,\, 1.65 \mathrm{e}^{-u_n^\star/2} \right\}\frac{3(\gade)p_{11n}}{[1-(\gade)p_{11n}]^2}\,. 
\]
By adding this to the bound in (\ref{p: MO_Geo_bubulala}) we obtain the result. 
\end{proof}
\noindent The first of the error terms given by Proposition \ref{t: MO_Geo_disc_to_cont_appl}, i.e.
$\sqrt{2}(\gade)p_{11n}$ $/$ $[1-(\gade)p_{11n}]^2$,
is a bound on the error ${\sqrt{2}}\log(1/p_{00n})$ from Theorem \ref{t: approx_disc_cont}, where we used the assumption from Section \ref{s: MO_Geo_cont_nicer_int_fct} that $p_{00n} = 1-(\gade)p_{11n}$. This error term thus becomes small only if the probability of simultaneous success, $p_{11n}$, tends to $0$ as $n$ increases. The second error term, i.e. 
\[
\left\{\mathrm{e}^{-u_n^\star} ,\, 1.65 \mathrm{e}^{-u_n^\star/2} \right\}\frac{3(\gade)p_{11n}}{[1-(\gade)p_{11n}]^2}\,,
\]
is the bigger of the two, and determines the rate at which $p_{11n}$ must converge to $0$. The reason for that is that $p_{11n}$ must converge fast enough in order to offset the effect of the factor $\mathrm{e}^{-u_n^\star}$ which we will want to be increasing with increasing $n$, since $\mathrm{e}^{-u_n^\star} = \bls(\As_n)$ is 
the expected number of points in $\As_n$ of the approximating Poisson process, as well as more or less the expected number of threshold exceedances of the MPPE, for which we have $\mathrm{e}^{-u_n^\star}/ p_{00n} \le \bpis(\As_n) \le \mathrm{e}^{-u_n^\star}$. For instance, for a threshold $u_n^\star$ of size $- \log \log n$, the expected number of points in $\As$ of the two Poisson processes is $\log n$, the MPPE captures roughly the biggest $\log n$ points of its sample, and we need $p_{11n}= o(\log^{-1} n)$ for a sharp error bound. Suppose, for example, that $p_{11n} = n^{-1}$. 
Then, by (\ref{d: conditions}), the marginal probabilities of failure of $\mathbf{X}^\star_n$, $q_{1n}$ and $q_{2n}$, as well as the probability of simultaneous failure, $p_{00n}$, tend to $1$ very fast.

The mean measure $\bls$ is by definition dependent on the values of the distributional parameters. Since these need to vary with the sample size $n$ in order to obtain a small error for the approximation of $\mathrm{PRM}(\bpis)$ by $\mathrm{PRM}(\bls)$, it follows that $\bls = \bls_n$ (and of course also $\bpis = \bpis_n$). 
Though we have now achieved the goal of successfully approximating by a Poisson process with a continuous intensity, 
the conditions needed to accomplish this imply that we are not satisfied with our results yet, 
since we prefer to approximate by a Poisson process with continuous intensity that does not vary with $n$. 
As the next section will demonstrate, a suitable candidate is given by the Poisson process with intensity measure $\blsnew$ defined in (\ref{d: blsnew}). 
\subsubsection{Approximation in $d_2$ and $d_{TV}$ by a Poisson process independent of $n$}\label{s: MO_Geo_nicer}
We determine an error estimate for the approximation of the Poisson process with intensity measure $\bls=\bls_n$ by the 
Poisson process with intensity measure $\blsnew$, defined in (\ref{d: blsnew}), that does not depend on the sample size $n$. 
Since both intensities are continuous, there is no special need to use the $d_2$-distance. We give the error in both the total 
variation and the $d_2$ distances.
For the error in total variation we may straightforwardly use 
Proposition \ref{t: dTV_two_PRM} for the approximation of two Poisson processes. For the $d_2$-error, which will be smaller than the $d_{TV}$, we may additionally use Lemma 
\ref{t: Delta_1_upgamma_d2} for an upper bound on $\Delta_1 \upgamma$, where $\upgamma$ is the solution of an adequate Stein equation. This bound, containing the factor $\bls(\As)^{-1/2}$ (or $\blsnew({\As})^{-1/2}$), serves in reducing the $d_2$-error. 
\begin{thm}\label{t: approx_cont_nicer}
With the notations from Sections \ref{s: MO_Geo} and \ref{s: MO_Geo_dTV}-\ref{s: MO_Geo_cont_nicer_int_fct}, we obtain, for any set $A^\star \in \mathcal{B}([-\log n, \infty)^2)$,
\begin{align*}
(i) \qquad &d_{TV} \left(\mathrm{PRM}(\bl^\star), \mathrm{PRM}(\blsnew) \right) \le \int_{A^\star}| \bl^\star(d \mathbf{z})-\blsnew(d\mathbf{z})|,\\
(ii) \qquad&d_2 \left(\mathrm{PRM}(\bl^\star), \mathrm{PRM}(\blsnew) \right) \\
&\le \min\left\{ 1,\, 1.65 \min \left\{ \bl^\star(A^\star)^{-1/2} \, , \, \blsnew(A^\star)^{-1/2}\right\}\right\}
\int_{A^\star}| \bl^\star(d \mathbf{z})-\blsnew(d\mathbf{z})|.
\end{align*}
\end{thm}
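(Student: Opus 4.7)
The plan is to treat the two statements in parallel with the proof of Theorem \ref{t: approx_disc_cont}, but note that the absence of a ``lattice versus continuous'' discrepancy makes the argument significantly cleaner: both $\bl^\star$ and $\blsnew$ are absolutely continuous with respect to Lebesgue/Hausdorff measure on the off-diagonal and on the diagonal respectively, so there is no analogue of the auxiliary set $\tilde{A}^\star$, and no leftover term from restricting to a union of coordinate squares.

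Part (i) is immediate: apply Proposition \ref{t: dTV_two_PRM} with $\bl = \bl^\star|_{A^\star}$ and $\tilde{\bl} = \blsnew|_{A^\star}$. Nothing else is needed.

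For part (ii), I would use the Stein--Chen characterisation of $d_2$ exactly as in the proof of Theorem \ref{t: approx_disc_cont}. Fix $h \in \mathcal{H}$ and let $\upgamma$ be the function associated with $h$ and the immigration--death process on $A^\star$ with immigration intensity $\blsnew$, so that by Proposition \ref{t: upgamma_solves_Stein} and \eqref{d: Stein_eq_d2},
\[
|\mathrm{PRM}(\bl^\star)(h) - \mathrm{PRM}(\blsnew)(h)| = |\mathbb{E}(\mathcal{A}\upgamma)(\Xi_{\bl^\star})|,
\]
where $\Xi_{\bl^\star} \sim \mathrm{PRM}(\bl^\star)$. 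Following Xia (1995), the Campbell-type identity for Poisson random measures combined with the form of the generator $\mathcal{A}$ yields
\[
\mathbb{E}(\mathcal{A}\upgamma)(\Xi_{\bl^\star}) = \mathbb{E}\int_{A^\star}\bigl[\upgamma(\Xi_{\bl^\star} + \delta_{\bz}) - \upgamma(\Xi_{\bl^\star})\bigr]\,\bigl(\blsnew(d\bz) - \bl^\star(d\bz)\bigr).
\]
Taking absolute values inside, bounding the integrand by $\Delta_1 \upgamma$ and then applying Lemma \ref{t: Delta_1_upgamma_d2} gives
\[
|\mathbb{E}(\mathcal{A}\upgamma)(\Xi_{\bl^\star})| \le s_2(h)\,\min\!\left\{1,\,\frac{1.65}{\sqrt{\blsnew(A^\star)}}\right\} \int_{A^\star} |\bl^\star - \blsnew|(d\bz).
\]
Dividing by $s_2(h)$, taking the supremum over $h \in \mathcal{H}$, and invoking the definition \eqref{d: def_d2} of $d_2$ establishes the bound with $\blsnew(A^\star)^{-1/2}$.

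To obtain the $\bl^\star(A^\star)^{-1/2}$ bound, I would repeat the entire argument with the roles of $\bl^\star$ and $\blsnew$ interchanged: build $\upgamma$ from an immigration--death process with immigration intensity $\bl^\star$, expand $|\mathrm{PRM}(\blsnew)(h) - \mathrm{PRM}(\bl^\star)(h)|$ against $\Xi_{\blsnew}$, and apply Lemma \ref{t: Delta_1_upgamma_d2} with $\lambda = \bl^\star(A^\star)$. Taking the minimum of the two bounds (and using the trivial fact $d_2 \le 1$ to cap the leading factor) produces the stated estimate. The only potentially delicate point is justifying the Campbell-type identity for the measure $\bl^\star$, which has both an absolutely continuous piece on $\{s \ne t\}$ and a one-dimensional singular piece along $\{s=t\}$; this however follows directly from the general theory of Poisson random measures on a locally compact separable metric space, since $\bl^\star$ is still $\sigma$-finite. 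No further difficulties are anticipated.
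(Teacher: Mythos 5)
Your proposal is correct and follows essentially the same route as the paper: part (i) is a direct application of Proposition \ref{t: dTV_two_PRM}, and part (ii) reuses the Stein--Chen/immigration--death argument from Theorem \ref{t: approx_disc_cont}, bounding the increment of $\upgamma$ by $\Delta_1\upgamma$ via Lemma \ref{t: Delta_1_upgamma_d2} and then interchanging the roles of $\bl^\star$ and $\blsnew$ to obtain the minimum over the two normalising factors. The only (immaterial) difference is that you run the two symmetric versions in the opposite order from the paper.
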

\begin{proof}
(i) By Proposition \ref{t: MO_Geo_same_meas_rectangles} (ii), $\bls$ is finite. Moreover, $\blsnew$ is finite since integration of 
$\lsnew$ and $\acute{\lambda}^\star_{\gamma,\delta}$ over $[u^\star, \infty)^2$ 
gives $\mathrm{e}^{-u^\star}$ 
which equals $n$ for $u^\star = -\log n$. 
Proposition \ref{t: dTV_two_PRM} then immediately gives the result. \\
(ii) Using the same immigration-death process $Z$ and arguments as in the proof of Theorem \ref{t: approx_disc_cont}, we can show that for 
$\Xi^\star_{\gamma,\delta} \sim \mathrm{PRM}(\blsnew)$,
\[
 \mathbb{E}h(\Xi^\star_{\gamma,\delta})-\mathrm{PRM}(\bls)(h) = 
\mathbb{E} \left\{  \int_{\As} [\upgamma(\Xi^\star_{\gamma,\delta} + \delta_{\bz}) - \upgamma(\Xi^\star_{\gamma,\delta})] 
 (\bls(d\bz) - \blsnew(d\bz)) \right\}.
\]
Analogously to \eqref{p: summand2} and \eqref{p: deltaone}, the integrand may be bounded by 
\[
 \Delta_1\upgamma \int_{\As}|\bls(d\bz)-\blsnew(d\bz)| \le s_2(h)\min\left\{ 1 ,\, \frac{1.65}{\sqrt{\bls(\As)}}\right\} \int_{\As}|\bls(d\bz)-\blsnew(d\bz)|.
\]
Here, $1.65(\bls(\As))^{-1/2}$ may be replaced by $1.65(\blsnew(\As))^{-1/2}$ by going through the same arguments as before, but instead starting
with an immigration-death process over $\As$ with immigration intensity $\blsnew$, unit per-capita death rate, and equilibrium distribution 
$\mathrm{PRM}(\blsnew)$.
\end{proof}
We now again assume that the distributional parameters $p_{00n}, q_{1n}$ and $q_{2n}$ satisfy (\ref{d: conditions}) 
and apply Theorem \ref{t: approx_cont_nicer} to the case where $\As =\As_n = [u_n^\star, \infty)^2$.
We express the error bounds in terms of the threshold $u_n^{\star}$ and of the probability of simultaneous success $p_{11n}$.
\begin{prop}\label{t: MO_Geo_cont_to_nicer_appl}
Let $p_{11n} \in (0,1)$ and assume that $q_{1n}$, $q_{2n}$ and $p_{00n}$ satisfy (\ref{d: conditions}). For any choice of $u_n^\star \ge -\log n$, 
define $A^\star = [u_n^\star,\infty)^2$.  
With the notations from Sections \ref{s: MO_Geo}-\ref{s: MO_Geo_cont_nicer_int_fct},
\begin{align*}
(i) \qquad &d_{TV} \left(\mathrm{PRM}(\bl^\star), \mathrm{PRM}(\blsnew) \right) \le \frac{4(\gade)^2 p_{11n}}{[1-(\gade)p_{11n}]^3}\, \mathrm{e}^{-u_n^\star}\,,\\
(ii) \qquad &d_2 \left(\mathrm{PRM}(\bl^\star), \mathrm{PRM}(\blsnew) \right)
\le \min\left\{ \mathrm{e}^{-u_n^\star} ,\, 1.65 \mathrm{e}^{-u_n^\star/2}\right\} 
\frac{4(\gade)^2 p_{11n}}{[1-(\gade)p_{11n}]^3}\,.
\end{align*}
\end{prop}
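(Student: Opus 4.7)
The strategy is to reduce both (i) and (ii) to a single $L^1$-type estimate on $\int_{A^\star}|\bl^\star-\blsnew|(d\mathbf{z})$ via Theorem~\ref{t: approx_cont_nicer}, and then to exploit the product structure of the densities together with the parameter estimates in Lemma~\ref{t: Lemma_cond}. First I would compute, paralleling the verification that $\bl^\star$ and $\blsnew$ integrate to $n$ over $[-\log n,\infty)^2$, that $\bl^\star(A^\star)=\blsnew(A^\star)=\mathrm{e}^{-u_n^\star}$. Hence the pre-factor in Theorem~\ref{t: approx_cont_nicer}(ii) becomes $\min\{1,\,1.65\,\mathrm{e}^{u_n^\star/2}\}$, and if one can prove $\int_{A^\star}|\bl^\star-\blsnew|(d\mathbf{z})\le C\,\mathrm{e}^{-u_n^\star}$ with $C=4(\gade)^2 p_{11n}/[1-(\gade)p_{11n}]^3$, then (i) follows immediately and (ii) via $\min\{1,\,1.65\,\mathrm{e}^{u_n^\star/2}\}\cdot C\,\mathrm{e}^{-u_n^\star}=\min\{C\,\mathrm{e}^{-u_n^\star},\,1.65\,C\,\mathrm{e}^{-u_n^\star/2}\}$, as claimed.

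The bulk of the work is proving the bound on $\int_{A^\star}|\bl^\star-\blsnew|(d\mathbf{z})$. I would split the integral into the three regions $\{s<t\}$, $\{s>t\}$, and the diagonal $\{s=t\}$. On each off-diagonal piece, both $\lambda^\star$ and $\lsnew$ are pure exponentials $C\,\mathrm{e}^{-\alpha s-\beta t}$ with $\alpha+\beta=1$ (and similarly for the parameterised version). I would then use the splitting
\[
\lambda^\star(s,t)-\lsnew(s,t)=(C_1-C_{1,\gamma\delta})\,\mathrm{e}^{-\alpha_0 s-\beta_0 t}+C_1\bigl(\mathrm{e}^{-\alpha s-\beta t}-\mathrm{e}^{-\alpha_0 s-\beta_0 t}\bigr),
\]
controlling the second term by $|\mathrm{e}^{-A}-\mathrm{e}^{-B}|\le \max(\mathrm{e}^{-A},\mathrm{e}^{-B})\,|A-B|$. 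Because $\alpha+\beta=\alpha_0+\beta_0=1$, the exponent gap collapses to $|\beta_0-\beta|\cdot|t-s|$, so parts (i)--(ii) of Lemma~\ref{t: Lemma_cond} give the required bounds on $|\beta_0-\beta|$ and $|\alpha_0-\alpha|$, while parts (iii)--(iv) (plus (v) for the diagonal comparison of $\log(p_{00}/q_1q_2)/\log(1/p_{00})$ with $1/(\gade)$) handle the prefactor differences.

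The integration step is then a routine Gamma computation: after the substitution $u=t-s$, $s\ge u_n^\star$, one finds $\int\!\!\int_{s<t,\, s,t\ge u_n^\star}\mathrm{e}^{-\alpha s-\beta t}(t-s)\,ds\,dt=\mathrm{e}^{-u_n^\star}/\beta^2$, and each of the other contributions is evaluated similarly. Every piece acquires a factor $\mathrm{e}^{-u_n^\star}$ and a numerical constant of order one, together with at most three powers of $[1-(\gade)p_{11n}]^{-1}$: one from (i) or (ii) of the lemma for the exponent difference, and up to two more from (v) for the prefactor of the diagonal term, which is where the cubic denominator is saturated. The symmetric $\{s>t\}$ piece is handled identically with $\gamma\leftrightarrow\delta$, and summation of the three contributions produces the $(\gade)^2 p_{11n}$ numerator.

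\textbf{Main obstacle.} The principal difficulty is the bookkeeping: one must organise the splitting so that, after integration, the factors $\gamma$, $\delta$, $1$, $\gade$, and the residual $\beta, \beta_0$ in denominators combine cleanly into the single constant $4$ and exactly three powers of $1-(\gade)p_{11n}$. The potentially delicate step is that the pure exponent-difference contribution from the off-diagonal pieces produces a denominator $\beta^2$ (or $\alpha^2$ in the symmetric region), which is only bounded away from zero because Lemma~\ref{t: Lemma_cond}(i)--(ii) forces $\beta\ge \beta_0 - \gamma p_{11n}/[1-(\gade)p_{11n}]$; absorbing this into the factor $[1-(\gade)p_{11n}]^{-3}$ requires an extra power of the denominator, which is precisely what Lemma~\ref{t: Lemma_cond}(v) supplies for the diagonal comparison and what the product $\alpha/\beta$ supplies in the off-diagonal case.
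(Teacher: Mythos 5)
Your proposal is correct and follows the same architecture as the paper's proof: reduce both claims to the single $L^1$ bound $\int_{A^\star}|\bl^\star-\blsnew|(d\mathbf{z})\le \tfrac{4(\gade)^2p_{11n}}{[1-(\gade)p_{11n}]^3}\mathrm{e}^{-u_n^\star}$ via Theorem~\ref{t: approx_cont_nicer} (using $\bl^\star(A^\star)=\blsnew(A^\star)=\mathrm{e}^{-u_n^\star}$), then split into $\{s<t\}$, $\{s>t\}$ and the diagonal and invoke Lemma~\ref{t: Lemma_cond}. The one genuine difference is in how the off-diagonal exponential discrepancy is handled: the paper exploits the exact identity $\mathrm{e}^{-\alpha s-\beta t}=\mathrm{e}^{-\alpha_0 s-\beta_0 t}\mathrm{e}^{h(t-s)}$ with $h=\beta_0-\beta=\alpha-\alpha_0$, writes $\ls=\lsnew\mathrm{e}^{h(t-s)}+(\text{prefactor correction})\cdot\mathrm{e}^{-\alpha_0 s-\beta_0 t+h(t-s)}$, and evaluates $\int\!\!\int\lsnew(\mathrm{e}^{h(t-s)}-1)$ exactly as a difference of two closed-form integrals; you instead apply $|\mathrm{e}^{-A}-\mathrm{e}^{-B}|\le\max(\mathrm{e}^{-A},\mathrm{e}^{-B})|A-B|$ and the Gamma integral $\int_0^\infty u\,\mathrm{e}^{-\beta u}\mathrm{d}u=\beta^{-2}$. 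Both give bounds of the same order, and your constants do close up to $4(\gade)^2$ (via $\gamma+\delta\le\gade$ and $3(\gade)+1\le 4(\gade)$), so this is a harmless and arguably more generic variant. Two small corrections to your bookkeeping remarks: the diagonal term needs only parts (i)--(ii) of Lemma~\ref{t: Lemma_cond}, since $\acute{\lambda}^\star(s)=[\tfrac{1}{\gade}-h-g]\mathrm{e}^{-s}$ exactly (part (v) is used in Proposition~\ref{t: MO_Geo_disc_to_cont_appl}, not here), and the cubic denominator is saturated by the off-diagonal terms (two powers from $h+h^2$, one from $\log p_{00}/\log q_2$, equivalently from your $\beta^{-2}$), not by the diagonal term, which only consumes a single power.
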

\begin{proof}
For ease of notation we again omit the subscript $n$. \\
(i) By Theorem \ref{t: approx_cont_nicer}, 
\begin{align*}
&d_{TV} \left(\mathrm{PRM}(\bl^\star), \mathrm{PRM}(\blsnew) \right) \le\,\, \int_{u^\star}^\infty  \int_{u^\star}^t  \left|\ls(s,t) -\lsnew(s,t)\right|\De s\De t\\
&+ \int_{u^\star}^\infty  \int_{u^\star}^s  \left|\ls(s,t) -\lsnew(s,t)\right|\De t\De s+ \int_{u^\star}^\infty \left| \acute{\lambda}^\star(s)-\acute{\lambda}^\star_{\gamma,\delta}(s)\right|\De s.\\
\end{align*}
Define 
\[
h := h(p_{11}) := \frac{1+\delta}{\gade} - \frac{\log q_{2}}{\log p_{00}}\, \quad \text{and} \quad
g:= g(p_{11}) := \frac{1+\gamma}{\gade} -\frac{\log q_{1}}{\log p_{00}}\,.
\]
We first consider the case $s=t$. Note that, with definitions (\ref{d: ls}) and (\ref{d: lsnew}),
\begin{align*}
\acute{\lambda}^\star(s)= \frac{\log(p_{00}/q_1q_2)}{\log(1/p_{00})}\, \mathrm{e}^{-s}
&= \left[\frac{\log q_1+\log q_2}{\log p_{00}} - \frac{2+ \gamma + \delta}{\gade} + \frac{1}{\gade}\right]\mathrm{e}^{-s}\\
&= \left[\frac{1}{\gade} - h(p_{11}) - g(p_{11})\right]\mathrm{e}^{-s},
\end{align*}
and that, since $h, g \ge 0$ by Lemma \ref{t: Lemma_cond} (i) and (ii), we thus have $\acute{\lambda}^\star(s) \le \acute{\lambda}^\star_{\gamma,\delta}(s)$.
Hence,
\[
\int_{u^\star}^\infty \left| \acute{\lambda}^\star(s)-\acute{\lambda}^\star_{\gamma,\delta}(s)\right|\De s = \int_{u^\star}^\infty \left(h + g\right)\mathrm{e}^{-s}\De s 
 \le \frac{(\gamma+\delta)p_{11}}{1-(\gade)p_{11}}\, \mathrm{e}^{-u^\star},
\]
again by Lemma \ref{t: Lemma_cond} (i) and (ii). For $s < t$, note that 
\begin{align*}
\ls(s,t)
&= \left[ \frac{\gamma}{\gade} + h\right] \left[ \frac{1+\delta}{\gade} - h\right]
\mathrm{e}^{- \frac{\gamma}{\gade}\,s} \mathrm{e}^{-\frac{1+\delta}{\gade}\,t}\mathrm{e}^{h(t-s)}\\
& = \lsnew(s,t) \mathrm{e}^{h(t-s)} + \left(\frac{1+ \delta - \gamma}{\gade} \, h - h^2\right)\mathrm{e}^{-\frac{\gamma}{\gade}\, s} \mathrm{e}^{-\frac{1+\delta}{\gade}\, t} \mathrm{e}^{h(t-s)},
\end{align*}
where $\ls(s,t)$ and $ \lsnew(s,t)$ are defined by (\ref{d: ls}) and (\ref{d: lsnew}), respectively.
Thereby,
\[
\left| \ls(s,t) - \lsnew(s,t)\mathrm{e}^{h(t-s)}\right| = \left| \frac{1+ \delta - \gamma}{\gade} \, h - h^2\right| 
\mathrm{e}^{-\frac{\gamma}{\gade}\, s} \mathrm{e}^{-\frac{1+\delta}{\gade}\, t} \mathrm{e}^{h(t-s)},
\]
where $\left| \frac{1+ \delta - \gamma}{\gade} \, h - h^2\right|  \le h + h^2 $. 
Note that we have
\begin{equation}\label{p: MO_Geo_nicer_appl_0}
\left| \ls(s,t) - \lsnew(s,t)\right| \le \left| \ls(s,t) - \lsnew(s,t)\mathrm{e}^{h(t-s)}\right| + \lsnew(s,t)\left| \mathrm{e}^{h(t-s)} - 1\right|.
\end{equation}
We first compute the following integral:
\begin{equation}\label{p: MO_Geo_d2_nicer_appl_integral}
\int_{u^\star}^\infty \int_{u^\star}^t \mathrm{e}^{-\frac{\gamma}{\gade}\, s-\frac{1+\delta}{\gade}\, t+ h(t-s)} \De s\De t
= \int_{u^\star}^\infty \int_{u^\star}^t \mathrm{e}^{-\frac{\log (p_{00}/q_2)}{\log p_{00}}\,s-\frac{\log q_2}{\log p_{00}}\,t}\De s\De t
= \frac{\log p_{00}}{\log q_2}\, \mathrm{e}^{-u^\star},
\end{equation}
where, using $z \le -\log(1-z) \le \frac{z}{1-z}$ for all $|z| \le 1$, and (\ref{d: conditions}),
\begin{align}\label{MO_Geo_nicer_appl_2}
\begin{split} 
\frac{\log p_{00}}{\log q_2} &=\frac{-\log[1-(\gade)p_{11}]}{- \log[1-(1+\delta)p_{11}]} \le \frac{\gade}{(1+\delta)[1-(\gade)p_{11}]}\,\\
&\le \frac{\gade}{1+ \delta}\left\{ 1+ \frac{(\gade) p_{11}}{[1-(\gade)p_{11}]^2}\right\}.
\end{split}
\end{align}
Moreover, by Lemma \ref{t: Lemma_cond} (i), 
\[
h+ h^2 \le \frac{\gamma p_{11}}{1-(\gade)p_{11}} +  \left[\frac{\gamma p_{11}}{1-(\gade)p_{11}} \right]^2 \le \frac{2\gamma p_{11}}{[1-(\gade)p_{11}]^2},
\]
since $\gamma p_{11} = p_{10} < 1$, and therefore $(\gamma p_{11})^2 \le \gamma p_{11}$. 
Then,
\begin{align}\label{p: MO_Geo_nicer_appl_4}
\begin{split}
\int_{u^\star}^\infty \int_{u^\star}^t  \left| \ls(s,t) - \lsnew(s,t)\mathrm{e}^{h(t-s)}\right|\De s\De t 
\le \frac{2 \gamma (\gade)p_{11}}{(1+\delta)[1-(\gade)p_{11}]^3}\, \mathrm{e}^{-u^\star},
\end{split}
\end{align}
which gives a bound for the integral of the first error term in (\ref{p: MO_Geo_nicer_appl_0}). For the second error term in (\ref{p: MO_Geo_nicer_appl_0}), note first that
$|\mathrm{e}^{h(t-s)}-1| = \mathrm{e}^{h(t-s)} -1$, since $h \ge 0$ and $t > s$. By (\ref{p: MO_Geo_d2_nicer_appl_integral}) and (\ref{MO_Geo_nicer_appl_2}), 
and with definition (\ref{d: lsnew}) of $\lsnew(s,t)$, we obtain 
\begin{align}\label{t: MO_Geo_nicer_appl_2}
\begin{split}
\int_{u^\star}^\infty \int_{u^\star}^t  \lsnew(s,t)\mathrm{e}^{h(t-s)} &= \frac{\gamma(1+\delta)\log p_{00}}{(\gade)^2 \log q_2}\, \mathrm{e}^{-u^\star}\\
&\le \frac{\gamma}{ \gade }\left\{ 1+ \frac{(\gade) p_{11}}{[1-(\gade)p_{11}]^2}\right\}\, \mathrm{e}^{-u^\star},
\end{split}
\end{align}
whereas
\begin{equation}\label{t: MO_Geo_nicer_appl_3}
\int_{u^\star}^\infty \int_{u^\star}^t \lsnew(s,t)\De s\De t = \frac{\gamma}{\gade}\, \mathrm{e}^{-u^\star}.
\end{equation}
By (\ref{t: MO_Geo_nicer_appl_2}) and (\ref{t: MO_Geo_nicer_appl_3}), we may thus bound the integral of the second error term in (\ref{p: MO_Geo_nicer_appl_0})
as follows:
\begin{align}\label{p: MO_Geo_nicer_appl_5}
\begin{split}
\int_{u^\star}^\infty \int_{u^\star}^t \lsnew(s,t)\left| \mathrm{e}^{h(t-s)} - 1\right|\De s\De t
&\le \frac{\gamma p_{11}}{[1-(\gade)p_{11}]^2}\,\mathrm{e}^{-u^\star}.
\end{split}
\end{align}
Hence, for $s < t$, (\ref{p: MO_Geo_nicer_appl_0}), (\ref{p: MO_Geo_nicer_appl_4}) and (\ref{p: MO_Geo_nicer_appl_5}) give
\[
\int_{u^\star}^\infty \int_{u^\star}^t  \left| \ls(s,t) - \lsnew(s,t)\right|\De s\De t
\le  \frac{\gamma p_{11}}{[1-(\gade)p_{11}]^3}\, \left\{ \frac{2(\gade)}{1+ \delta} + 1 \right\}\mathrm{e}^{-u^\star}.
\]
By proceeding analogously for $s>t$, we obtain
\[
\int_{u^\star}^\infty \int_{u^\star}^s  \left| \ls(s,t) - \lsnew(s,t)\right|\De t\De s
\le \frac{\delta p_{11}}{[1-(\gade)p_{11}]^3}\, \left\{ \frac{2(\gade)}{1+ \gamma} + 1 \right\}\mathrm{e}^{-u^\star}.
\]
The sum of the bounds for the three cases $s=t$, $s<t$ and $s>t$ yields 
\begin{eqnarray*}
&&\int_{\As} \left| \bls(d\bz) - \blsnew(d\bz)\right| \\
&&\le \frac{2p_{11}}{[1-(\gade)p_{11}]^3}\left\{ \frac{\gamma(\gade)}{1+\delta} + \frac{\delta(\gade)}{1+\gamma} + \gamma + \delta\right\}\mathrm{e}^{-u^\star}\le \frac{4(\gade)^2 p_{11}}{[1-(\gade)p_{11}]^3}\, \mathrm{e}^{-u^\star},
\end{eqnarray*}
where we used $(1+\gamma)^{-1}, (1+\delta)^{-1} < 1$, and $\gamma + \delta \le \gade \le (\gade)^2$ for the second inequality. \\
(ii) Direct computations give $\bls(\As)= \mathrm{e}^{-u^\star} = \blsnew(\As)$. Theorem \ref{t: approx_cont_nicer} (ii), together with the bound from (i), then
immediately gives the result. 
\end{proof}
\noindent The error bounds established in Proposition \ref{t: MO_Geo_cont_to_nicer_appl} are similar to the error bound from Proposition \ref{t: MO_Geo_disc_to_cont_appl}. As before, $p_{11n}$ needs to converge to $0$ fast enough to make up for the factor $\mathrm{e}^{-u_n^\star}$ which increases the size of the error as soon as $u_n^\star < 0$. And since $u_n^\star \ge 0$ gives $1$ or no points in $\As$, the mean number of points in $\As$ being given by $\mathrm{e}^{-u_n^\star}$ for either process, we would 
want the threshold $u_n^\star$ to be negative. 

The biggest difference between the $d_2$-bounds from Propositions \ref{t: MO_Geo_disc_to_cont_appl} and  \ref{t: MO_Geo_cont_to_nicer_appl} is that 
the former contains the multiplicative factor $[1-(\gade)p_{11n}]^{-2}$ and the latter the bigger factor $[1-(\gade)p_{11n}]^{-3}$. However, 
since we need $p_{11n} \to 0$ as $n \to \infty$, we will have $(\gade)p_{11n} \le 1/2$ for all $n$ large enough. 
Then $[1-(\gade)p_{11n}]^{-3} \le 2 [1-(\gade)p_{11n}]^{-2}$ so that 
both error bounds will be of the same rate. Hence, for large enough $n$, the approximation by a further Poisson process does not add an error of a bigger size 
than the one that arises from the approximation by only $\mathrm{PRM}(\bls)$.
\subsubsection{Final bound in the $d_2$-distance}\label{s: MO_Geo_final_bound_d2}
The following corollary summarises the results from Sections \ref{s: MO_Geo_dTV}, \ref{s: MO_Geo_d2} and \ref{s: MO_Geo_nicer}. 
It gives an estimate for the error in the $d_2$-distance of the approximation of the law of an MPPE $\Xi^\star_{A^\star}$ with \iid Marshall-Olkin geometric marks, living on a lattice of points contained in $\As \cap[-\log n, \infty)^2$, by the law of a Poisson process with a continuous intensity measure $\blsnew$ over $\As \cap[-\log n, \infty)^2$, where $\As = [u^\star, \infty)^2$ for some choice of threshold $u^\star \ge -\log n$.
\begin{cor}\label{t: MO_Geo_final_bound_d2} Let $p_{11n} \in (0,1)$ and assume that $q_{1n}$, $q_{2n}$ and $p_{00n}$ satisfy (\ref{d: conditions}). For any choice of $u_n^\star \ge -\log n$, 
define $A^\star = [u_n^\star,\infty)^2$.  
With the notations from Sections \ref{s: MO_Geo} and \ref{s: MO_Geo_dTV}-\ref{s: MO_Geo_cont_nicer_int_fct},
\begin{eqnarray*}
&&d_2\left(\mathcal{L}(\Xi^\star_{\As}), \mathrm{PRM}(\blsnew)\right) \le \frac{\mathrm{e}^{-u^\star_n}}{n} + \frac{(\gade)^2p_{11n}}{[1-(\gade)p_{11n}]^3} \left\{ \sqrt{2} + 7\min\left\{  \mathrm{e}^{-u_n^\star},\,1.65 \mathrm{e}^{-u_n^\star/2}\right\} \right\}.
\end{eqnarray*}
\begin{proof}
We have 
\begin{align*}
&d_2\left(\mathcal{L}(\Xi^\star_{\As}), \mathrm{PRM}(\blsnew)\right) \\
&\le  d_2\left(\mathcal{L}(\Xi^\star_{\As}), \mathrm{PRM}(\bpis)\right)
+    d_2\left(\mathrm{PRM}(\bpis), \mathrm{PRM}(\bls)\right)
+    d_2\left(\mathrm{PRM}(\bls), \mathrm{PRM}(\blsnew)\right).
\end{align*}
By Theorem \ref{t: MO_Geo_dTV_lattice},
\[
d_2\left(\mathcal{L}(\Xi^\star_{\As}), \mathrm{PRM}(\bpis)\right) \le d_{TV}\left(\mathcal{L}(\Xi^\star_{\As}), \mathrm{PRM}(\bpis)\right) \le \frac{\mathrm{e}^{-u_n^\star}}{n}\,.
\]
Furthermore, with the results from Propositions \ref{t: MO_Geo_disc_to_cont_appl} and \ref{t: MO_Geo_cont_to_nicer_appl}, and using $(\gade)\le(\gade)^2$ and 
$[1-(\gade)p_{11}]^{-2} \le [1-(\gade)p_{11}]^{-3}$, we obtain
\begin{multline*}
d_2\left(\mathrm{PRM}(\bpis), \mathrm{PRM}(\bls)\right) + d_2\left(\mathrm{PRM}(\bls), \mathrm{PRM}(\blsnew)\right) \\
\le \frac{(\gade)^2p_{11n}}{[1-(\gade)p_{11n}]^3} \left\{ {\sqrt{2}} + 7 \min\left\{ \mathrm{e}^{-{u^\star_n}} ,\, 1.65 \mathrm{e}^{-{u^\star_n}/2}\right\}\right\}.
\end{multline*}
\end{proof}
\end{cor}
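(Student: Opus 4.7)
The plan is to bound the target $d_2$-distance via a three-term triangle inequality by inserting $\mathrm{PRM}(\bpis)$ and $\mathrm{PRM}(\bls)$ as intermediate laws between $\mathcal{L}(\Xi^\star_{\As})$ and $\mathrm{PRM}(\blsnew)$. Each of the three resulting distances has already been estimated in the preceding propositions, so the main work is just to assemble the pieces and simplify the coefficients.

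First, I would apply the triangle inequality for $d_2$ to write
\begin{align*}
d_2\left(\mathcal{L}(\Xi^\star_{\As}), \mathrm{PRM}(\blsnew)\right)
&\le d_2\left(\mathcal{L}(\Xi^\star_{\As}), \mathrm{PRM}(\bpis)\right) \\
&\quad + d_2\left(\mathrm{PRM}(\bpis), \mathrm{PRM}(\bls)\right) + d_2\left(\mathrm{PRM}(\bls), \mathrm{PRM}(\blsnew)\right).
\end{align*}
For the first summand, I invoke $d_2 \le d_{TV}$ together with Proposition \ref{t: MO_Geo_dTV_lattice} (specifically the bound \eqref{t: MO_Geo_dTV_basic_Poiapprox_appl} for the threshold exceedance set $\As = [u_n^\star,\infty)^2$) to get $\mathrm{e}^{-u_n^\star}/n$. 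The second summand is exactly what Proposition \ref{t: MO_Geo_disc_to_cont_appl} bounds, yielding $\frac{(\gade)p_{11n}}{[1-(\gade)p_{11n}]^2}\{\sqrt{2}+3\min(\mathrm{e}^{-u_n^\star},1.65\mathrm{e}^{-u_n^\star/2})\}$. The third summand is handled by Proposition \ref{t: MO_Geo_cont_to_nicer_appl}(ii), which gives $\min(\mathrm{e}^{-u_n^\star},1.65\mathrm{e}^{-u_n^\star/2})\cdot\frac{4(\gade)^2 p_{11n}}{[1-(\gade)p_{11n}]^3}$.

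The final step is a simple algebraic consolidation: I factor out $\frac{(\gade)^2 p_{11n}}{[1-(\gade)p_{11n}]^3}$ by using the two trivial inequalities $(\gade)\le(\gade)^2$ (valid since $\gade\ge 1$) and $[1-(\gade)p_{11n}]^{-2}\le[1-(\gade)p_{11n}]^{-3}$ (valid since the base lies in $(0,1]$) to upgrade the middle summand. The coefficients of $\min(\mathrm{e}^{-u_n^\star},1.65\mathrm{e}^{-u_n^\star/2})$ then combine as $3+4=7$, producing the bracketed factor $\sqrt{2}+7\min(\mathrm{e}^{-u_n^\star},1.65\mathrm{e}^{-u_n^\star/2})$ claimed in the statement.

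There is no genuine obstacle here since this is a bookkeeping corollary; the only point requiring a little care is to make sure the coefficient inflation $(\gade)\mapsto(\gade)^2$ and $[1-(\gade)p_{11n}]^{-2}\mapsto[1-(\gade)p_{11n}]^{-3}$ is indeed monotone under our standing assumption that $\gamma,\delta>0$ and $(\gade)p_{11n}\in(0,1)$, both of which are built into the setup of Section \ref{s: MO_Geo_cont_nicer_int_fct}. No new estimates are required.
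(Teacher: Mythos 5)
Your proposal is correct and follows exactly the same route as the paper: the same three-term triangle inequality through $\mathrm{PRM}(\bpis)$ and $\mathrm{PRM}(\bls)$, the same invocation of Proposition \ref{t: MO_Geo_dTV_lattice} together with $d_2 \le d_{TV}$ for the first term, Propositions \ref{t: MO_Geo_disc_to_cont_appl} and \ref{t: MO_Geo_cont_to_nicer_appl}(ii) for the other two, and the same consolidation via $(\gade)\le(\gade)^2$ and $[1-(\gade)p_{11n}]^{-2}\le[1-(\gade)p_{11n}]^{-3}$ giving the coefficient $3+4=7$.
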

\noindent By far the smallest component of the error estimate from Corollary \ref{t: MO_Geo_final_bound_d2} is given by $\mathrm{e}^{-u_n^\star}/n$, the error arising from approximating $\mathcal{L}(\Xi^\star_{A^\star})$ by $\mathrm{PRM}(\mathbb{E}\Xi^\star_{A^\star})$, which lives on the lattice $\As \cap E^\star$ just as $\Xi^\star_{\As}$. 
A
far bigger error emerges for the MPPE with Marshall-Olkin geometric marks when going from the Poisson process on the lattice to a Poisson process on $\As \cap [-\log n, \infty)^2$ with continuous intensity. 
This error can only be small if the probability of simultaneous success of the Marshall-Olkin geometric distribution, $p_{11}$, and thereby also the marginal success probabilities $1-q_{1n}$ and $1-q_{2n}$, tend to zero as $n \to \infty$ at a rate fast enough to compensate for the factor $\mathrm{e}^{-u_n^\star}$, the (rough) number of points expected in $\As_n$ for each of the processes. For instance, for $\As_n = [-\log \log n, \infty)^2$ and $p_{11n}=1/n$, we expect $\log n$ joint threshold exceedances, and obtain
\begin{align*}
&d_2\left(\mathcal{L}(\Xi^\star_{\As}), \mathrm{PRM}(\blsnew)\right) \\
&\le \frac{\log n}{n} + \frac{(\gade)^2}{n[1-(\gade)/n]^3} \left\{ {\sqrt{2}} + 7\min\left\{  \log n ,\, 1.65 \sqrt{\log n}\right\} \right\} \le \frac{C \log n}{n},
\end{align*}
where $C$ is some constant. With the (very strong) condition $p_{11n}=1/n$, we thus obtain an error of the same size as the error that we obtain when approximating $\mathcal{L}(\Xi^\star_{\As})$ only by $\mathrm{PRM}(\mathbb{E}\Xi^\star_{A^\star})$.
\section*{Acknowledgements} The thorough and precise reports of two anonymous referees, which led to several improvements of the paper, is kindly acknowledged.
\bibliography{Bibliography}
\end{document}